\setlist[itemize]{leftmargin=18pt}
\setlist[enumerate]{leftmargin=18pt}
\theoremstyle{plain}
\numberwithin{equation}{section}
\newtheorem{theorem}{Theorem}[section]
\newtheorem{proposition}[theorem]{Proposition}
\newtheorem{lemma}[theorem]{Lemma}
\newtheorem{corollary}[theorem]{Corollary}
\newtheorem{remark}{Remark}
\theoremstyle{definition}
\newcommand{\appsection}[1]{\let\oldthesection\thesection
\renewcommand{\thesection}{Appendix \oldthesection}
\section{#1}\let\thesection\oldthesection}
\newtheorem{definition}[theorem]{Definition}
\newtheorem{example}[theorem]{Example}
\providecommand{\leftsquigarrow}{%
  \mathrel{\mathpalette\reflect@squig\relax}%
}
\newcommand{\reflect@squig}[2]{%
  \reflectbox{$\m@th#1\rightsquigarrow$}%
}
\newcommand\reallywidehat[1]{%
\savestack{\tmpbox}{\stretchto{%
  \scaleto{%
    \scalerel*[\widthof{\ensuremath{#1}}]{\kern-.6pt\bigwedge\kern-.6pt}%
    {\rule[-\textheight/2]{1ex}{\textheight}}
  }{\textheight}%
}{0.5ex}}%
\stackon[1pt]{#1}{\tmpbox}%
}
\title{Classification of Wormhole Singularities}
\author{Jaime Negrete}
\address{Department of Mathematics, University of Georgia, Athens, USA}
\email{jaime.negrete@uga.edu}
\date{\today}
\begin{document}
\begin{abstract}
    We classify all wormhole singularities, i.e. cyclic quotient surface singularities admitting at least two extremal P-resolutions, thereby solving an open problem posed by Urzúa in his recent book \cite{LibroGian}.
    Our approach introduces a new combinatorial framework based on what we call the coherent graph of a framed triangulated polygon. As an application, we give an alternative proof of the Hacking-Tevelev-Urzúa theorem on the maximum number of extremal P-resolutions of a cyclic quotient singularity \cite{Flipping}*{Theorem 4.3}. 
\end{abstract}
\maketitle

{\hypersetup{linkcolor=black}
\tableofcontents
}

\vspace*{-3em}
\section{Introduction}
Singular varieties play a central role in algebraic geometry. They arise naturally from a wide range of constructions, including branched covers, finite group quotients, degenerations, and the higher-dimensional minimal model program. In the theory of singular surfaces, cyclic quotient singularities occupy a distinguished position: they form a class of singularities that frequently appear in degenerations and admit a rich combinatorial structure reflected both in their versal deformation spaces and in their minimal resolutions. Given coprime integers $0<q<m$, we define the (c.q.s) cyclic quotient singularity $\frac{1}{m}(1,q)$ as the germ at the origin of the quotient of $\mathbb{C}^2$ by the action $(x,y)\mapsto (\zeta x,\zeta^{q}y)$, where $\zeta$ is a primitive $m$-th root of unity.

Kollár and Shepherd-Barron \cite{KSB} introduced the notion of P-resolutions to study the deformation space of cyclic quotient singularities. Fixing a cyclic quotient singularity $(Q\in Y)$, they proved that there is a one-to-one correspondence between the components of the deformation space of $(Q \in Y)$ and the P-resolutions associated with $(Q \in Y)$. A P-resolution of $(Q\in Y)$ is a partial resolution $f\colon X \to (Q\in Y)$ such that $X$ has only T-singularities, and $K_X$ is ample relative to $f$. 
T-singularities are the 2-dimensional quotient singularities which admit a  $\mathbb{Q}$-Gorenstein one-parameter smoothing, i.e. they are either Du Val singularities or cyclic quotient singularities of type $\frac{1}{dn^2}(1,dna-1)$ with $0<a<n$, $d\geq 1$ and $\gcd(n,a)=1$. Among the non-Du Val T-singularities, those with $d=1$ are arguably the most relevant from several points of view. They are called Wahl singularities, as Wahl \cite{Wahl} was the first who studied topological and analytic invariants of smoothings of these singularities.
 
The cyclic quotient singularity $\frac{1}{m}(1,q)$ has a minimal resolution that replaces the singular point by a chain of smooth curves $E_i$ such that $E_i\simeq \mathbb{P}^1$ and $E_{i}^2=-e_i\leq -2$. The integers $e_i$ are exactly the numbers in the Hirzebruch-Jung (H-J) continued fraction 
$$\frac{m}{q}=e_1-\frac{1}{e_2-\frac{1}{\ddots \hspace{0,2em} -\frac{1}{e_r}}}:=[e_1,\cdots,e_r].$$
The chain $\frac{m}{q}=[e_1,\cdots,e_r]$ has a dual chain, which by definition is the Hirzebruch-Jung continued fraction $\frac{m}{m-q}=[k_1,\cdots,k_s]$ where $k_i\geq 2$.

Christophersen \cite{Christophersen} and Stevens \cite{Stevens} gave a combinatorial way to understand all P-resolutions of cyclic quotient singularities via zero continued fractions bounded by its dual Hirzebruch-Jung continued fraction (see also \cite{Flipping}). A zero continued fraction is a well-defined H-J continued fraction $[b_1,\cdots,b_s]$ for some $b_i\geq 1$ such that its value as a fraction is $0$.
We say that the fraction $[b_1,\cdots,b_s]$ is a zero continued fraction bounded by $\frac{m}{m-q}=[k_1,\cdots,k_s]$ if $[b_1,\cdots,b_s]$ is a zero continued fraction and $b_{i}\leq k_i$ for all $1\leq i\leq s$. 
There are the following well-known one-to-one correspondences 
$$\left\{\begin{array}{l}
    \text{Irreducible components} \\
    \hspace{1.4em} \text{ of } Def\left(\frac{1}{m}(1,q)\right)
  \end{array}\right\} \xleftrightarrow[]{1-1}\left\{\begin{array}{l}
    \text{P-resolutions} \\
    \hspace{0,6em} \text{ of } \frac{1}{m}(1,q)
  \end{array}\right\} \xleftrightarrow[]{1-1}\left\{\begin{array}{l}
    \text{zero continued fractions} \\
    \hspace{1em} \text{ bounded by } \frac{m}{m-q}
  \end{array}\right\}.
$$
Motivated by the work of Kollár, Mori, and Prokhorov \cite{KM}, \cite{MP} on extremal neighborhoods, Hacking, Tevelev, and Urzúa \cite{Flipping} defined extremal P-resolutions as a particular case of P-resolutions. An extremal P-resolution of $(Q\in Y)$ is a partial resolution $f\colon (C\subset X)\to (Q\in Y)$, such that $X$ has only Wahl singularities, there is one exceptional curve $C$ isomorphic to $\mathbb{P}^1$, and $K_X$ is ample relative to $f$. They proved that for any pair of coprime integers $0<q<m$ the c.q.s $\frac{1}{m}(1,q)$ can admit at most two distinct extremal P-resolutions \cite{Flipping}*{Theorem 4.3}.  

In view of the first correspondence \cite{KSB}, Urzúa and Vilches introduced wormhole singularities \cite{Wormhole} to study a particular phenomenon in the  Kollár--Shepherd-Barron--Alexeev (KSBA) compactification of the moduli space of surfaces of general type. A wormhole singularity is a cyclic quotient singularity which admits at least two different extremal P-resolutions. 
The second correspondence \cite{Christophersen},\cite{Stevens} together with \cite{Flipping}*{\textsection 4} lead to the notion of WW-decomposition of the H-J continued fraction of $\frac{m}{m-q}$ (Definition \ref{WW sequence}), 
and the existence of extremal P-resolutions associated with the c.q.s $\frac{1}{m}(1,q)$ is equivalent to the existence of some specific zero continued fractions bounded by $\frac{m}{m-q}$ with exactly two marks.

It is well-known that zero continued fractions of length $s$ are in bijection with triangulated $(s+1)$-gons together with a hidden index
\cite{Christophersen},\cite{Stevens},\cite{Flipping} (see also the appendix in \cite{T-Horikawas}). 
Recall that given a triangulated $(s+1)$-gon with vertices $P_0,\cdots,P_s$, one defines the index of a vertex $P_i$ as $$v_i:=(\text{number of diagonals}\\ \text{  from the vertex } P_i)+1.$$ To use this bijection systematically we say that a triangulated polygon is a framed triangulated polygon (Definition \ref{definition of framed triangulation}) if we choose a hidden index.

Using this bijection, there is a natural way to define a number associated with a WW-sequence, called the WW-index (Definition \ref{WW-index}).
The dual chain of a wormhole singularity defines a WW-sequence (Definition \ref{WW sequence} and \ref{definition of  wormhole singularity}). We say that a  wormhole singularity is a basic wormhole singularity if the WW-index of its dual fraction is greater than $1$. The classification of wormhole singularities can be reduced to the classification of basic wormhole singularities via the HTU algorithm in Section 3.

The classification of wormhole singularities has been open. A better understanding of wormhole singularities has been desired to potentially address the wormhole conjecture \cite{Wormhole} or to explore other interesting birational connections such as the relation between wormhole singularities and the birational geometry of Markov numbers \cite{Markov}.
From a topological point of view, wormhole singularities are related to the problem of fillings of lens spaces \cite{JonnyEvans}*{Open problem J.2}.
Basic wormhole singularities appear in Urzua's recent book \cite{Libro Gian}*{\textsection I} under the name of Uroburos, and its classification was posed as an open problem. 
In this paper we solve the open problem \cite{LibroGian}*{Open problem 8} through a careful study of the geometry of the triangulations associated with basic  wormhole singularities. We call these triangulations basic wormhole triangulations. In these triangulations, vertices of index greater than 2 play a crucial role; we refer to them as weights. To study basic wormhole triangulations, we consider the following approach.  
\begin{enumerate}
    \item Using the notion of WW-sequences we classify all candidates to basic wormhole triangulations, which we call accordion triangulations (Definition \ref{Definition of accordion triangulation}). They are the triangulated polygons with exactly $2$ vertices of index $1$.
    \item We introduce a graph associated with framed triangulated polygons called the coherent graph (Definition \ref{coherent graph}). It partially encodes the geometry of the diagonals of framed triangulated polygons, providing a simplified representation of framed triangulated polygons where vertices of index $1$ and $2$ are treated identically, while emphasizing the significance of weights. Coherent graphs provide us with the right framework for studying basic wormhole triangulations, since the coherent graphs of our candidates in $(1)$ are naturally equipped (after specifying a frame) with an explicit system of linear relations derived from the arrangement of the diagonals in the triangulation (Lemma \ref{linear relations in the coherent graph}). 
    \item To use the properties of coherent graphs, we must address the fact that accordion triangulations do not come with a frame. Among all the possible frames for an accordion triangulation, we consider an almost canonical frame which we call a standard frame (Definition \ref{standard framed}).
    That is, a frame where the hidden index is a weight adjacent to one of the vertices of index $1$ and the first entry of the associated zero fraction is equal to $1$. 
\end{enumerate}

The main results of this paper are the following:

\begin{theorem}\label{Intro: Main theorem}(= Theorem \ref{Main theorem})
Let $n\geq 2$ be a integer, and let $\mathscr{P}$ be a framed accordion triangulation with a standard frame
and weights $x_1,\cdots,x_n$. Then $\mathscr{P}$ is a basic wormhole triangulation with a standard frame if and only if there exists an integer $1\leq m\leq n-1$ such that the system $S_0\cup S_m$ of $2n$ linear equations
\begin{align*}
    S_0:y_i & =x_{n-i \text{(mod n)}}-k_{i}^{(0)}  \hspace{4em} \text{for } 1\leq i\leq n,\\
    S_m:y_i & =x_{(n-i)+m \text{ (mod n)}}-k_i^{(m)} \hspace{1.8em} \text{for } 1\leq i\leq n,
\end{align*}
is consistent; where the numbers $k^{(0)}_{i}$ and $k_{i}^{(m)}$ are equal to $3$ for all $i$, except at four specific positions determined by $n$ and $m$, where they are equal to $1$.
\end{theorem}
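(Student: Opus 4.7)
The plan is to translate both sides of the biconditional into the language of zero continued fractions bounded by the dual chain of $\mathscr{P}$, and then apply Lemma \ref{linear relations in the coherent graph} twice to extract the two linear systems $S_0$ and $S_m$ from the two WW-decompositions of the accordion.

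For the forward direction, I would first use the correspondences recalled in the introduction to rephrase ``$\mathscr{P}$ is a basic wormhole triangulation'' as ``$\mathscr{P}$ admits two distinct WW-decompositions, both of WW-index strictly greater than $1$.'' Because $\mathscr{P}$ is an accordion triangulation, the two WW-decompositions must begin at the two distinct vertices of index $1$. The first of these matches the given standard frame, while the second is obtained by cyclically reframing at the other vertex of index $1$. I would encode the relative cyclic shift between these framings by an integer $1 \leq m \leq n-1$. Next, I would apply Lemma \ref{linear relations in the coherent graph} to each framing. For the standard frame this produces the system $S_0$: the coefficients $k_i^{(0)}$ are read off the first WW-sequence, taking the generic value $3$ at standard WW-positions and dropping to $1$ at the two positions associated with the $1$-vertices of the accordion. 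For the reframed triangulation it produces $S_m$, with the weights $x_{n-i}$ replaced by $x_{(n-i)+m}$ to reflect the cyclic shift. The consistency of $S_0 \cup S_m$ is then automatic, since both systems describe the same underlying coherent graph of $\mathscr{P}$.

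For the converse, I would reverse this dictionary. Assume $S_0 \cup S_m$ is consistent for some $m$. Then $S_0$ and $S_m$ describe a single coherent graph structure on $\mathscr{P}$; reading $S_0$ as a WW-sequence rooted at the vertex distinguished by the standard frame, and $S_m$ as a WW-sequence rooted at the vertex obtained by cyclic shift by $m$, Lemma \ref{linear relations in the coherent graph} guarantees that both are bona fide WW-sequences bounded by the dual chain. The placement of the four exceptional $1$-positions forces each of the two WW-sequences to contain exactly two $1$-vertex marks, which is precisely the WW-index $>1$ condition. Hence $\mathscr{P}$ admits two distinct extremal P-resolutions and is basic wormhole.

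The main obstacle I expect is the combinatorial bookkeeping in the forward direction: pinning down the exact four positions where $k_i^{(0)}$ and $k_i^{(m)}$ drop from $3$ to $1$, and verifying that the reindexing of weights in $S_m$ really takes the simple cyclic form $x_{(n-i)+m \,(\mathrm{mod}\ n)}$. This requires a careful analysis of how the two $1$-vertices of the accordion sit relative to one another under the cyclic reframing, and how the linear relations induced by adjacent diagonals propagate across the coherent graph. Once this bookkeeping is settled, the rest of the argument is a direct application of the coherent graph framework developed earlier in the paper.
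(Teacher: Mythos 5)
There is a genuine gap, and it sits at the heart of both directions: you identify the second WW-decomposition (the companion) with a \emph{cyclic reframing} of $\mathscr{P}$ at its other index-$1$ vertex, and you interpret $m$ as ``the relative cyclic shift between these framings.'' That is not what a companion is. Reframing (Remark \ref{cyclically rotate entries of an extended zero chain corresponds to choose another hidden index}) changes only which vertex is declared hidden; it does not change the triangulation, so it can never produce a second WW-decomposition. By Lemma \ref{v_0 is well-defined for a wormhole.} the two WW-decompositions of a WW-sequence share the \emph{same} hidden index; what differs is the triangulation itself — the diagonals move, and the two index-$1$ vertices of the companion sit at four positions disjoint from those of $\mathscr{P}$ (Remark \ref{pair of basic wormhole triangulations differ in 4 positions.}). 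Consequently your forward direction never establishes that the companion is one of the finitely many accordion triangulations $\mathscr{P}^m_{x_1,\dots,x_n}$ with the \emph{same} weights in the \emph{same} positions and the \emph{same} edge labels $y_i$; without that, the two systems $S_0$ and $S_m$ do not share unknowns and their joint consistency says nothing. The paper gets this from two ingredients you do not use: Lemma \ref{coherent graph of basic wormholes are the same} (companions have equal coherent graphs, so the $x_i$ and $y_i$ are literally the same numbers in both systems) and Lemma \ref{Alternative description of accordion triangulations} (an accordion with prescribed weights is determined by the pair $(x_m,x_{m+1})$ receiving the diagonals from $x_n$, which is where the integer $m$ actually comes from — it indexes a coherent rotation of diagonals, not a rotation of the frame).

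The converse suffers from the same misconception: ``reading $S_m$ as a WW-sequence rooted at the vertex obtained by cyclic shift by $m$'' again treats $S_m$ as a reframing, and the claim that the four exceptional $1$-positions give ``precisely the WW-index $>1$ condition'' conflates the positions of the index-$1$ vertices with the hidden index. The correct converse takes a solution of $S_0\cup S_m$, builds the two genuinely different triangulations $\mathscr{P}^0_{x_1,\dots,x_n}$ and $\mathscr{P}^m_{x_1,\dots,x_n}$, and checks (via equality of their coherent graphs and the $4$-position swap of $1$'s and $2$'s) that they are WW-decompositions of one WW-sequence whose WW-index is the weight $x_n\geq 3$. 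Your instinct that the proof reduces to two applications of Lemma \ref{linear relations in the coherent graph} plus bookkeeping of the four exceptional positions is right, but the object to which the second application is applied must be corrected before the argument can go through.
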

The consistency of this system of linear equations is described in Theorem \ref{Main theorem}. Moreover, when the system $S_0\cup S_m$ is consistent, we give a parametric solution depending on $\gcd(n,m)$-parameters and explain how to recover a parametric family of basic wormhole triangulations with a standard frame from this parametric solution.
\begin{corollary} \label{Intro: Basic wormhole triangulations algorithm}(=Corollary \ref{Basic wormhole triangulations algorithm})
We give a 3-step constructive algorithm to obtain all basic wormhole triangulations with a fixed number of weights.\\
    \textbf{Input}: An integer $n\geq 2$.\\
    \textbf{Output}: All parametric families of basic wormhole triangulations with $n$ weights.
\end{corollary}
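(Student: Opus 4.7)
The plan is to read the algorithm straight off the biconditional in Theorem \ref{Intro: Main theorem}. That theorem asserts that a framed accordion triangulation with a standard frame and $n$ weights is a basic wormhole triangulation if and only if one of the $n-1$ linear systems $S_0\cup S_m$ (with $1\leq m\leq n-1$) is consistent; hence enumerating the consistent systems and their solutions enumerates all basic wormhole triangulations with $n$ weights. The parametric families come for free from the $\gcd(n,m)$-parameter solutions promised in Theorem \ref{Main theorem}.

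Concretely, I would organize the algorithm into three steps. \emph{Setup:} for each $m\in\{1,\dots,n-1\}$, write down $S_0\cup S_m$ by setting $k^{(0)}_i=k^{(m)}_i=3$ everywhere except at the four positions dictated by the pair $(n,m)$, where the coefficient is $1$. \emph{Consistency and solution:} apply the consistency criterion of Theorem \ref{Main theorem} to each $m$; discard the inconsistent values, and for each consistent $m$ produce the $\gcd(n,m)$-parameter family of integer solutions $(x_1,\ldots,x_n;y_1,\ldots,y_n)$, together with the inequalities that guarantee that the $x_i$'s are admissible weights. \emph{Reconstruction:} for each parametric solution, build the framed accordion triangulation with standard frame whose weights are the $x_i$'s and whose associated diagonal data are the $y_i$'s, yielding an explicit parametric family of basic wormhole triangulations.

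Correctness then splits into soundness, by the ``if'' direction of Theorem \ref{Main theorem}, and completeness, by the ``only if'' direction; the procedure terminates because $m$ ranges over the finite set $\{1,\ldots,n-1\}$. The main (but already resolved) obstacle is that the reconstruction in Step 3 be genuinely well-defined: every parametric vector produced in Step 2 must come from an actual triangulated polygon realizing the prescribed weight pattern with standard frame. This is precisely what the biconditional of Theorem \ref{Main theorem} guarantees, so no additional argument is needed here; the corollary is essentially the constructive packaging of that theorem.
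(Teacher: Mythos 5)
There is a genuine gap: your algorithm only produces basic wormhole triangulations \emph{with a standard frame}, whereas the corollary asks for \emph{all} basic wormhole triangulations with $n$ weights. Theorem \ref{Main theorem} is a statement exclusively about framed accordion triangulations carrying a standard frame (hidden index a type-(I) weight adjacent to an index-$1$ vertex, first entry equal to $1$). A general basic wormhole triangulation is, by Definition \ref{definition of basic wormhole triangulation}, any framed triangulated polygon representing a WW-decomposition of a basic wormhole singularity; its hidden index need only be $>1$ and need not satisfy the standard-frame conditions. So reading the algorithm ``straight off the biconditional'' gives soundness and completeness only within the standard-frame subclass, and your claimed completeness argument (``by the only-if direction'') does not reach the triangulations whose frame is non-standard.

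The paper closes this gap with a third step that you omit: after producing the pairs $(\mathscr{P}^{0}_{x_1,\cdots,x_n},\mathscr{P}^{n_m}_{x_1,\cdots,x_n})$ from the consistent systems, one applies the same cyclic permutation to the entries of \emph{both} extended zero chains (equivalently, changes the hidden index of both triangulations simultaneously, cf.\ Remark \ref{cyclically rotate entries of an extended zero chain corresponds to choose another hidden index}), subject to the condition that the new hidden index is $>1$ in both chains. The proof that this recovers everything uses Lemma \ref{bound for the number of WW-decompositions under cyclic permutation}: starting from an arbitrary basic wormhole triangulation, a suitable cyclic permutation of the underlying WW-sequence produces a WW-sequence, with no fewer WW-decompositions, one of whose decompositions is the same underlying triangulated polygon but with a standard frame; Theorem \ref{Main theorem} then identifies it, and reversing the permutation recovers the original. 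Without this re-framing step (and the accompanying admissibility condition on the new hidden index), your algorithm's output is a proper subfamily and the corollary is not proved. Your ``Setup'' and ``Consistency and solution'' steps do match the paper's Steps I and II, but your ``Reconstruction'' step is not the paper's Step III and does not substitute for it.
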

This algorithm follows from a formal argument on how we can change the frame from the framed triangulated polygons arising in Theorem \ref{Intro: Main theorem}. Using this algorithm together with the HTU algorithm, one can explicitly describe the Hirzebruch-Jung continued fraction of all wormhole singularities. As an application of the ideas introduced in this paper we give an alternative proof of the Hacking-Tevelev-Urzúa theorem on the maximal number of extremal P-resolutions, see \cite{Flipping}*{Thm 4.3}.
\begin{theorem}\label{Intro: Alternative proof of HTU theorem on maximal number of extremal P-resolutions} (=Theorem \ref{Alternative proof of HTU theorem on maximal number of extremal P-resolutions})
    A cyclic quotient singularity $\frac{1}{m}(1,q)$ can admit at most two distinct extremal P-resolutions.
\end{theorem}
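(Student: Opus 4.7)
The plan is to use the combinatorial technology developed in the paper rather than the geometric arguments of the original proof in \cite{Flipping}. The first step is to recast the statement combinatorially. By the Christophersen-Stevens correspondence together with the WW-decomposition framework recalled in the introduction, distinct extremal P-resolutions of $\frac{1}{m}(1,q)$ correspond bijectively to distinct zero continued fractions bounded by the dual chain $\frac{m}{m-q}$ with exactly two marks, and hence (via the polygon bijection) to distinguished vertices of the triangulated polygon $\mathscr{P}$ associated with $\frac{m}{m-q}$, namely vertices of index $1$. The theorem therefore reduces to showing that $\mathscr{P}$ carries at most two vertices of index $1$.

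The second step is the reduction to the basic case. The HTU algorithm of Section 3 lets one strip off extremal contractions while keeping track of extremal P-resolutions and of vertices of index $1$; thus it suffices to bound the number of vertices of index $1$ for basic wormhole triangulations. In this setting the candidate shape is already an accordion triangulation, which by definition has \emph{exactly} two vertices of index $1$, so the only thing to rule out is a hypothetical third one.

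Assume for contradiction that a basic wormhole triangulation $\mathscr{P}$ has three vertices of index $1$. Pick two of them, together with an adjacent weight, to specify a standard frame, and apply Theorem \ref{Intro: Main theorem} to obtain the consistent system $S_0\cup S_m$ in the weights $x_1,\ldots,x_n$. The third vertex of index $1$ imposes a further linear identity among the weights via the coherent-graph relations of Lemma \ref{linear relations in the coherent graph}: indeed, each vertex of index $1$ is the source of a concrete linear dependence in the coherent graph, and the one attached to the third vertex is supported on a different diagonal pattern than those attached to the first two. I would then argue that this extra identity is not a consequence of $S_0\cup S_m$, so the enlarged system is strictly overdetermined; inspecting the parametric solution provided by Theorem \ref{Intro: Main theorem} shows that any common solution forces some $x_i$ to fall below the threshold $x_i\geq 3$ required for an accordion triangulation, the desired contradiction.

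The main obstacle will be the independence step in the previous paragraph: one must verify, case by case on the cyclic position of the three hypothetical index-$1$ vertices relative to the weights, that the constraint coming from the third vertex is genuinely new. This is precisely the kind of diagonal-arrangement bookkeeping that the coherent-graph formalism is designed for, and one expects to handle it by splitting the polygon into the three arcs cut out by the three vertices and comparing the linear relations produced on each arc. The remaining steps — the translation into the polygon language, the HTU reduction, and the algebraic contradiction modulo Theorem \ref{Intro: Main theorem} — are bookkeeping built on top of results already stated in the paper.
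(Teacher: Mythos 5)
Your reduction in the first paragraph is incorrect, and the error propagates through the whole argument. Distinct extremal P-resolutions of $\frac{1}{m}(1,q)$ correspond to distinct WW-decompositions of $\frac{m}{m-q}=[k_1,\cdots,k_s]$, i.e.\ to distinct choices of a pair of positions $\alpha<\beta$ at which one subtracts $1$ to obtain a zero continued fraction. Each such choice produces a \emph{different} zero continued fraction and hence a \emph{different} triangulated polygon; there is no single triangulated polygon ``associated with $\frac{m}{m-q}$'' whose index-$1$ vertices enumerate the extremal P-resolutions. Consequently the statement does not reduce to ``$\mathscr{P}$ has at most two vertices of index $1$.'' That latter statement is true for every accordion triangulation by construction (Lemma \ref{every triangulation with exactly two 1's is an accordion triangulation}), so your contradiction hypothesis — a basic wormhole triangulation with three index-$1$ vertices — is vacuously impossible, and ruling it out proves nothing about the theorem. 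What must actually be excluded is the existence of \emph{three pairwise-distinct companion triangulations} $\mathscr{P}^{0}_{x_1,\cdots,x_n}$, $\mathscr{P}^{m}_{x_1,\cdots,x_n}$, $\mathscr{P}^{m'}_{x_1,\cdots,x_n}$ with $m\neq m'$, each of which has exactly two index-$1$ vertices but in different positions.

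The paper's proof of Theorem \ref{Alternative proof of HTU theorem on maximal number of extremal P-resolutions} does share your broad strategy of reducing via the HTU algorithm and cyclic permutation to a standard-framed accordion triangulation and then deriving an overdetermined linear system, but the extra constraints come from the \emph{third companion's} system $S_{m'}$ (giving $(I-P^{m})\vec{x}=k^{(0)}-k^{(m)}$ and $(I-P^{m'})\vec{x}=k^{(0)}-k^{(m')}$ simultaneously), not from a third index-$1$ vertex inside one polygon. Moreover, the mechanism of contradiction you propose — that a common solution would force some $x_i<3$ — is not what happens and is not justified anywhere in your sketch; the paper instead shows the combined system is infeasible outright (over the integers, irrespective of the bound $x_i\geq 3$) by exhibiting a negative-weight cycle in the associated constraint graph. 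To repair your argument you would need to (i) replace the ``third index-$1$ vertex'' with the third WW-decomposition and its system $S_{m'}$, and (ii) supply an actual infeasibility argument for $S_0\cup S_m\cup S_{m'}$ rather than an appeal to the threshold $x_i\geq 3$.
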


\subsubsection*{Summary of contents} In Section 2, we recall basic definitions and relevant results on Hirzebruch-Jung continued fractions, extremal P-resolutions, wormhole singularities, WW-sequences and triangulated polygons. Here, we recall the bijection between zero continued fractions of length $s$ and triangulated $(s+1)$-gon together with a hidden index.
In Section $3$, we recall the HTU algorithm to reduce the classification of wormhole singularities to the classification of basic wormhole singularities. We introduce the definitions of basic wormhole triangulation, accordion triangulation, companions of a basic wormhole triangulation, standard family of accordion triangulations, and coherent rotation of diagonals. Here, we introduce the crucial notion of coherent graph of a framed triangulated polygon. A key result is Lemma \ref{coherent graph of basic wormholes are the same} which enable us to study companions of a basic wormhole triangulation via some specific linear system of equations. This section contains the proofs of Theorem \ref{Intro: Main theorem}, Corollary \ref{Intro: Basic wormhole triangulations algorithm}, and Theorem \ref{Intro: Alternative proof of HTU theorem on maximal number of extremal P-resolutions}. 
In Section $4$, we classify all accordion triangulations with at most $5$ weights that admit a frame that converts them into a basic wormhole triangulation. In addition, we present an example of how to perform the last step in Corollary \ref{Intro: Basic wormhole triangulations algorithm} and how to obtain the wormhole singularity associated with those framed triangulated polygons.
\subsubsection*{\textbf{Acknowledgements}}
I thank my advisor, Valery Alexeev, for the discussions on a preliminary version of this document and for suggesting the terminology "accordion triangulations." I also thank Vicente Monreal, Joaqu\text{í}n Moraga, Giancarlo Urz\text{ú}a, and Juan Pablo Zuñiga for their comments. I gratefully acknowledge the University of Georgia and the Department of Mathematics at UGA for awarding me the
Franklin College Research Assistantship for the 2024-2025 academic year.

\section{Preliminaries on extremal P-resolutions and  wormhole singularities}

We begin by setting notation and reviewing fundamental results about Hirzebruch-Jung continued fractions.

\begin{definition}\cite{jp-gian}*{Definition 2.1}
    A collection $\{e_1,\cdots,e_r\}$ of positive integers admits a Hirzebruch-Jung (H-J) continued fraction 
    $$[e_1,\cdots,e_r]:=e_1-\frac{1}{e_2-\frac{1}{\ddots \hspace{0,2em} -\frac{1}{e_r}}},$$
    if $[e_i,\cdots,e_{r}]>0$ for all $i\geq 2$, and $[e_1,\cdots,e_r]\geq 0$. Its value is the rational number $[e_1,\cdots,e_r]$, and its length is $r$. The fraction $[e_1,\cdots,e_r]$ is also called a chain.
\end{definition}

If the collection $\{e_1,\cdots,e_r\}$ satisfies that $e_{i}\geq 2$ for all $1\leq i\leq r$, then the value of the chain $[e_1,\cdots,e_r]$ is a rational number $\frac{m}{q}$ greater than $1$. Conversely, any fraction $\frac{m}{q}>1$ has a unique H-J continued fraction with entries greater than $1$. This gives a one-to-one correspondence between H-J continued fractions with entries greater than $1$ and $\mathbb{Q}_{>1}$.
A zero continued fraction is a Hirzebruch-Jung continued fraction whose value as a fraction is equal to $0$. Every chain $\frac{m}{q}=[e_1,\cdots,e_r]$ has a dual chain, which by definition is the Hirzebruch-Jung continued fraction $\frac{m}{m-q}=[k_1,\cdots,k_s]$ where $k_i\geq 2$. We have the relation
$$[e_1,\cdots,e_r,1,k_s,\cdots,k_1]=0.$$

\remark \label{dualizing using dot diagram} From the Riemenschneider's dot diagram \cite{Dot diagram}, if we write 
$$\frac{m}{q}=[\underbrace{2,\cdots,2}_{a_1},b_1,\underbrace{2,\cdots,2}_{a_2},b_2,\cdots,\underbrace{2,\cdots,2}_{a_{c-1}},b_{c-1},\underbrace{2,\cdots,2}_{a_c}],$$
where $a_i\geq 0$ and $b_i\geq 3$ for all $i$, then
$$\frac{m}{m-q}=[a_1+2,\underbrace{2\cdots,2}_{b_1-3},a_2+3,\underbrace{2,\cdots,2}_{b_2-3},a_3+3,\cdots,a_{c-1}+3,\underbrace{2,\cdots,2}_{b_{c-1}-3},a_{c}+2].$$
\begin{proposition} (\cite{KSB}*{Proposition 3.10}) 
    Wahl singularities are cyclic quotient singularities of the form $\frac{1}{n^2}(1,na-1)$ with $0<a<n$ and $\gcd(n,a)=1$.
\end{proposition}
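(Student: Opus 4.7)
The plan is to deduce this classification from the more general characterization of T-singularities already recalled in the introduction, specialized to the case $d=1$. By the intro's definition, every non-Du Val T-singularity is a cyclic quotient singularity of the form $\frac{1}{dn^2}(1, dna-1)$ with $\gcd(n,a)=1$ and $0 < a < n$, and a Wahl singularity is by definition the subclass corresponding to $d=1$. Substituting $d=1$ immediately yields the claimed form $\frac{1}{n^2}(1, na-1)$, so in the setup of the introduction the proposition is a direct specialization of the KSB classification.

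To give a self-contained argument starting from Wahl's original viewpoint, namely a cyclic quotient singularity admitting a $\mathbb{Q}$-Gorenstein one-parameter smoothing whose Milnor fiber is a rational homology ball, the natural strategy proceeds in three steps. First, realize the singularity as a quotient $\mathbb{C}^2/\mu_m$ with weights $(1,q)$ and form its index-one canonical cover $Z$, which is a Du Val singularity of type $A_{n-1}$ where $n$ denotes the index of the canonical class. Second, pull back the $\mathbb{Q}$-Gorenstein smoothing to obtain a $\mu_n$-equivariant smoothing of $Z$, whose generic fiber is smooth and on which $\mu_n$ acts freely away from the origin. Third, translate the rational homology ball condition on the Milnor fiber into an arithmetic relation among $m$, $q$, and $n$, using the explicit Hirzebruch-Jung chain of $\frac{1}{m}(1,q)$ and its dual chain discussed in Remark \ref{dualizing using dot diagram}.

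The main obstacle is this last arithmetic step: extracting from the freeness of the cyclic action and the vanishing of the rational homology of the Milnor fiber the precise formulas $m = n^2$ and $q = na - 1$ for some $a$ coprime to $n$ with $0<a<n$. The argument, carried out in detail in \cite{KSB}*{Proposition 3.10}, proceeds by a character count on the equivariant Milnor fiber combined with an analysis of which Hirzebruch-Jung chains can arise; the crucial outcome is the divisibility $n \mid q+1$ together with the relation $m = n^2$, which together force the parametrization in the statement. Since this proposition is used only in the form stated, the present paper can simply invoke it rather than reprove it.
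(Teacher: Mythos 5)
The paper gives no proof of this proposition at all — it is stated purely as a citation of \cite{KSB}*{Proposition 3.10} — and your proposal correctly ends by invoking that reference, which matches the paper's treatment; your additional observation that the statement is the $d=1$ specialization of the T-singularity classification already recalled in the introduction is also valid given how the paper defines Wahl singularities. Your sketch of the KSB argument via the index-one cover and the equivariant Milnor fiber is a reasonable outline of the external proof, but nothing in the paper depends on reproducing it.
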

Wahl singularities are minimally resolved by W-chains. By an important result of Wahl we have an algorithm to recognize Wahl singularities from its minimal resolution.

\begin{proposition}\label{R,L-operations}\cite{KSB}*{ Proposition 3.11} (W-chains algorithm). For any Wahl singularity $\frac{1}{n^2}(1,na-1)$ we have:
\begin{enumerate}
    \item[(i)]  If $n=2$, then the W-chain is $[4]$.
    \item[(ii)] If $[e_1,\cdots,e_r]$ is a W-chain, then so are $[2,e_1,\cdots,e_r+1]$ and $[e_1+1,e_2,\cdots,e_r,2]$.
    \item[(iii)] Every W-chain can be obtained by starting with the chain in $(i)$ and iterating the steps described in $(ii)$.
\end{enumerate}   
\end{proposition}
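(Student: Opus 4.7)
The plan is to treat the three parts separately: part (i) by direct verification, part (ii) by a continued-fraction computation showing the two operations preserve the Wahl form, and part (iii) by induction on $n$ combined with an "endpoint $2$" lemma.

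For (i), the H-J continued fraction of $4/1$ is $[4]$, so the minimal resolution of $\frac{1}{4}(1,1)$ is a single $(-4)$-curve, confirming the base case.

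For (ii), I would use the standard matrix description of H-J continued fractions. If
\[
\begin{pmatrix} e_1 & -1 \\ 1 & 0 \end{pmatrix}\cdots\begin{pmatrix} e_r & -1 \\ 1 & 0 \end{pmatrix}=\begin{pmatrix} P_r & -P_{r-1} \\ Q_r & -Q_{r-1} \end{pmatrix},
\]
then for a Wahl chain $[e_1,\dots,e_r]=n^2/(na-1)$ we have $P_r=n^2$, $Q_r=na-1$, and the "back-continuants" $P_{r-1},Q_{r-1}$ are determined by the identity $P_rQ_{r-1}-P_{r-1}Q_r=1$. The L-operation prepends $\bigl(\begin{smallmatrix} 2 & -1 \\ 1 & 0\end{smallmatrix}\bigr)$ and modifies the last factor to $\bigl(\begin{smallmatrix} e_r+1 & -1 \\ 1 & 0\end{smallmatrix}\bigr)$; multiplying out and simplifying the resulting $(1,1)$ and $(2,1)$ entries, one verifies that $[2,e_1,\dots,e_r+1]=(n+a)^2/\bigl((n+a)a-1\bigr)$, which is the Wahl form. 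The R-operation is symmetric and yields $(2n-a)^2/\bigl((2n-a)(n-a)-1\bigr)$. So each operation sends a Wahl chain to a Wahl chain, with the parameter $n$ replaced by $n+a$ or $2n-a$ respectively.

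For (iii), I would induct on $n$, with $n=2$ giving the base case $[4]$ from (i). For the inductive step, suppose $[e_1,\dots,e_r]=n^2/(na-1)$ with $n\geq 3$. The decisive claim is that either $e_1=2$ or $e_r=2$. Assuming this claim, say $e_1=2$; then comparing to the L-formula of (ii), the chain $[e_2,\dots,e_r-1]$ must be a Wahl chain of the form $(n-a)^2/((n-a)a'-1)$ for an appropriate $a'$, in particular with strictly smaller $n$-parameter $n-a<n$. By induction this shorter chain lies in the tree generated by $[4]$, and applying one L-operation recovers $[e_1,\dots,e_r]$, completing the induction. The case $e_r=2$ is symmetric using the R-operation.

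The main obstacle, and the technical heart of the argument, will be the "endpoint $2$" claim: that any Wahl chain of length $\geq 2$ has $e_1=2$ or $e_r=2$. I would approach this via Riemenschneider's dot diagram (already recorded in Remark~\ref{dualizing using dot diagram}): dualizing a chain of entries all $\geq 3$ at both ends would force the dual chain to begin and end with $2$, and then a parity/divisibility analysis of the continuants against the constraint $P_r=n^2$, $Q_r=na-1$, $Q_{r-1}Q_r\equiv -1 \pmod{P_r}$, together with the Wahl identity $P_r=Q_r\cdot a + 1 - a^2$ (i.e., $n^2=(na-1)a+1-a^2$... rewritten), yields a contradiction. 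This will be the most computationally delicate step, and is where I expect the bulk of the work to lie.
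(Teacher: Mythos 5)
The paper does not actually prove this proposition: it is quoted verbatim from \cite{KSB}*{Proposition 3.11}, so there is no internal argument to compare yours against. Judged on its own terms, your outline is the standard one (base case, closure under the two operations via continuants, downward induction via an ``endpoint $2$'' lemma), and that architecture is sound; but as written it has one computational error and one genuine gap.

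The computational error is in (ii): your two formulas are swapped, and as stated they are false. Taking $[2,5]=9/5=\tfrac{3^2}{3\cdot 2-1}$ (so $n=2$, $a=1$ for the parent chain $[4]$), your L-formula predicts $(n+a)^2/((n+a)a-1)=9/2$, which is not $9/5$. Carrying out the matrix computation you set up (using $P_{r-1}=n(n-a)-1$, $Q_{r-1}=na-1-a^2$, which by the way are pinned down by the determinant identity \emph{together with} the reversal property $[e_r,\dots,e_1]=P_r/P_{r-1}$, not by the determinant alone) gives
\[
[2,e_1,\dots,e_r+1]=\frac{(2n-a)^2}{(2n-a)n-1},\qquad [e_1+1,\dots,e_r,2]=\frac{(n+a)^2}{(n+a)a-1}.
\]
This slip propagates into (iii): undoing the L-operation when $e_1=2$ produces a Wahl chain with parameter $a$ (not $n-a$), and undoing the R-operation when $e_r=2$ produces one with parameter $n-a$. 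Since both are strictly less than $n$, the induction still terminates, so this is repairable bookkeeping rather than a structural failure; but you should also record that $e_1=2$ forces $a>n/2$ (so the new parameters are admissible and, by the symmetric bound, the two endpoints cannot both equal $2$, whence $e_r\ge 3$ and $[e_2,\dots,e_r-1]$ is a legitimate H-J chain).

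The genuine gap is the ``endpoint $2$'' lemma itself, which you correctly identify as the heart of (iii) but do not prove. The proposed route --- dualize via Riemenschneider's dot diagram, observe the dual would start and end with $2$, then reach a contradiction by ``a parity/divisibility analysis'' against $P_r=n^2$, $Q_r=na-1$ --- is a plan, not an argument: no contradiction is actually derived, and it is exactly here that the content of the proposition lives (everything else is routine). Until that lemma is established --- for instance by showing directly that $e_1\ge 3$ forces $a\le n/2$ and $e_r\ge 3$ forces $a\ge n/2$ with equality impossible, or by the dual-chain characterization of Wahl chains --- part (iii) remains unproven.
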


\remark The operations in $(ii)$ are called $\mathcal{L}$-operation and $\mathcal{R}$-operation, respectively. There is an alternative description of this algorithm via duals chain of W-chains. If $[k_1,\cdots,k_s]$ is a dual W-chain, then so are $[2,k_1,\cdots,k_s+1]$ and $[k_1+1,k_2,\cdots,k_s,2]$. In such a case, these operations are called $\mathcal{R}^{\vee}$-operation and $\mathcal{L}^{\vee}$-operation, respectively. 

\begin{definition}\cite{Flipping}*{\textsection 4}
Let $0<q<m$ be coprime integers, and let ($Q \in Y$) be a cyclic quotient singularity $\frac{1}{m}(1,q)$. An extremal P-resolution of $(Q\in Y)$ is a partial resolution $f\colon (C\subset X)\to (Q\in Y)$, such that $X$ has only Wahl singularities, there is one exceptional curve $C$ and isomorphic to $\mathbb{P}^1$, and $K_X$ is ample relative to $f$.
\end{definition}

\begin{theorem}\label{Theorem 4.3 in Flipping paper}\cite{Flipping}*{Theorem 4.3}
    A cyclic quotient singularity $\frac{1}{m}(1,q)$ can admit at most
two distinct extremal P-resolutions.
\end{theorem}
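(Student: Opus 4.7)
My plan is to deduce Theorem \ref{Theorem 4.3 in Flipping paper} from the combinatorial classification developed in this paper, in particular Theorem \ref{Intro: Main theorem}. First, the HTU algorithm of Section 3 reduces the claim to the case where the singularity is a basic wormhole: the algorithm preserves the number of extremal P-resolutions and converts any wormhole singularity into a basic one, while a non-wormhole singularity has at most one extremal P-resolution essentially by definition. It therefore suffices to show that a basic wormhole singularity admits exactly two distinct extremal P-resolutions.

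Via the Christophersen--Stevens correspondence, the extremal P-resolutions of the basic wormhole singularity are encoded by accordion triangulations with appropriate frames. Fix one such accordion triangulation $\mathscr{P}$ with a standard frame and weights $x_1,\dots,x_n$. By Theorem \ref{Intro: Main theorem}, any other extremal P-resolution of the same singularity corresponds to a companion of $\mathscr{P}$ indexed by some $m \in \{1,\dots,n-1\}$ for which the linear system $S_0 \cup S_m$ is consistent. The theorem thus reduces to showing that at most one such value of $m$ exists for any fixed $\mathscr{P}$.

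To do this, I would eliminate the $y$-variables from $S_0 \cup S_m$ to obtain a cyclic system of the form $x_j - x_{j+m\,(\mathrm{mod}\,n)} = c^{(m)}_j$, where $c^{(m)}_j = k^{(0)}_{n-j} - k^{(m)}_{n-j}$ vanishes outside a set of at most eight positions explicitly determined by $n$ and $m$. Consistency amounts to one vanishing-sum condition per orbit of the shift $j\mapsto j+m$ on $\Z/n\Z$. If two distinct values $m_1,m_2$ both yielded consistent systems for the same weights, the $x_i$ would satisfy two independent cyclic difference systems simultaneously. Using the explicit description of the exceptional positions---which are read off from the two vertices of index $1$ and their adjacent weights in the standard frame---I would show that the combined system over-determines $(x_1,\dots,x_n)$ beyond what the positivity constraints $x_i \ge 3$ permit, forcing $m_1 = m_2$.

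The main obstacle I anticipate is the bookkeeping of the exceptional positions: they are defined modulo $n$ and shift with $m$ in a manner that depends on the geometry of the WW-decomposition. I expect Lemma \ref{coherent graph of basic wormholes are the same} to be the crucial tool, since it pins down the coherent graph---and hence the positions of the nonzero $c^{(m)}_j$---up to a controlled number of admissible frames. Should the direct linear-algebra route prove too delicate, an alternative is a geometric argument: a hypothetical third extremal P-resolution would correspond to a third coherent rotation of the diagonals of $\mathscr{P}$, and one can hope to contradict this by exploiting the rigidity of accordion triangulations, which by construction carry only two vertices of index $1$ serving as the endpoints of any such rotation.
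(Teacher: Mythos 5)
Your overall strategy matches the paper's: reduce via the HTU algorithm and a cyclic permutation to a basic wormhole triangulation with a standard frame, invoke Lemma \ref{coherent graph of basic wormholes are the same} so that all companions share the same weights $x_1,\dots,x_n$, and argue that no two distinct rotation parameters $m\neq m'$ can make $S_0\cup S_m$ and $S_0\cup S_{m'}$ simultaneously consistent. However, the decisive step is only announced, not carried out: you write that you ``would show that the combined system over-determines $(x_1,\dots,x_n)$ beyond what the positivity constraints $x_i\ge 3$ permit,'' but this is precisely where all the work lies, and the mechanism you name is not the right one. The combined difference systems are in fact inconsistent over $\mathbb{Q}$ -- positivity plays no role. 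The paper establishes this by subtracting the two eliminated systems to get $(P^{m'}-P^{m})\vec{x}=k^{(m')}-k^{(m)}$, encoding all three systems as a difference-constraint graph, and explicitly exhibiting a negative-weight cycle (splicing a $0$-edge from the third system into the $0$-weight orbit cycle of the first so as to bypass the unique $+2$ edge), which by the standard feasibility criterion \cite{ConstraintsGraph}*{Thm 24.9} rules out any solution. Without an argument of this kind -- or some other concrete verification that the vanishing-sum conditions for the orbits of $j\mapsto j+m_1$ and $j\mapsto j+m_2$ cannot hold simultaneously -- the conclusion ``forcing $m_1=m_2$'' is unsubstantiated, and your fallback ``rigidity'' argument is likewise only a hope.

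Two smaller points. First, the HTU algorithm does \emph{not} always terminate in a basic wormhole: Lemma \ref{HTU algorithm} records the degenerate output $\{3,\dots,3,2,2,3,\dots,3\}$ with WW-index $1$, which must be handled separately (it has exactly two WW-decompositions, Remark \ref{Degenerate case has 2 WW-decompositions}); your reduction silently assumes this case away, and the paper stresses that it is a missing case in earlier treatments. Second, ``exactly two'' should be ``at most two'' -- a basic wormhole singularity has at least two extremal P-resolutions by definition, but the statement to be proved is an upper bound for an arbitrary cyclic quotient singularity.
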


Given coprime integers $0<q<m$, we can find all extremal P-resolutions of the cyclic quotient singularity $\frac{1}{m}(1,q)$ by looking at the dual chain $\frac{m}{m-q}$.
\begin{proposition}\cite{Flipping}*{\textsection 4} \label{Bijection between extremal P-resolutions and pairs of zero continued fractions with two marks}
Let $0<q<m$ be coprime integers. If $\frac{m}{m-q}=[k_1,\cdots,k_s]$ where $k_{i}\geq 2$, then there is a bijection between extremal P-resolutions associated with the cyclic quotient singularity $\frac{1}{m}(1,q)$ and pairs $1\leq \alpha<\beta\leq s$ such that
$$[k_1,\cdots,k_{\alpha-1},k_{\alpha}-1,k_{\alpha+1},\cdots,k_{\beta-1},k_{\beta}-1,k_{\beta+1},\cdots,k_s]=0.$$
\end{proposition}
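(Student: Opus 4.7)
The plan is to leverage the Christophersen--Stevens bijection (recalled above) between P-resolutions of $\frac{1}{m}(1,q)$ and zero continued fractions $[b_1,\ldots,b_s]$ bounded by $[k_1,\ldots,k_s]$, and then single out inside this bijection the zero continued fractions that produce \emph{extremal} P-resolutions. What the statement really asserts is that these extremal ones are exactly the zero continued fractions obtained from $[k_1,\ldots,k_s]$ by subtracting $1$ at precisely two positions.

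\textbf{Forward direction.} Given an extremal P-resolution $f\colon(C\subset X)\to Y$, the central curve $C\simeq\mathbb{P}^1$ meets at most two Wahl singularities $P_1, P_2$ of $X$, at the two points where it touches the rest of the exceptional locus. Since $X$ has only rational singularities, its minimal resolution agrees with the minimal resolution of $Y$, and the chain of exceptional curves of $m/q$ decomposes as
$$[e_1,\ldots,e_r] \;=\; [W_1,\, d,\, W_2],$$
where $W_1, W_2$ are the W-chains of $P_1, P_2$ (trivial if $P_i$ is smooth) and $-d$ is the self-intersection of the strict transform of $C$. Dualizing via the Riemenschneider dot-diagram (Remark \ref{dualizing using dot diagram}) gives an explicit formula for $[k_1,\ldots,k_s]$ in terms of $W_1^\vee$, $W_2^\vee$, and $d$. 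I would then show that the Christophersen--Stevens zero continued fraction $[b_1,\ldots,b_s]$ attached to $f$ agrees with $[k_1,\ldots,k_s]$ at every position except at two positions $\alpha<\beta$, where $b_\alpha=k_\alpha-1$ and $b_\beta=k_\beta-1$; the zero identity $[b_1,\ldots,b_s]=0$ then follows by applying the fundamental relation $[\text{W-chain},1,(\text{W-chain})^\vee]=0$ to each of the two halves separated by the insertion of $d$.

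\textbf{Converse.} Any pair $(\alpha,\beta)$ satisfying the stated zero-fraction condition gives a bounded zero continued fraction $[b_1,\ldots,b_s]$, which by Christophersen--Stevens corresponds to a P-resolution $f\colon X\to Y$ (with $K_X$ ample relative to $f$ built into the construction). To see that $f$ is extremal, I would read its geometry off the shape of $[b_1,\ldots,b_s]$: the initial block $[b_1,\ldots,b_\alpha]$ and the terminal block $[b_\beta,\ldots,b_s]$ (each with one boundary entry equal to $k_i-1$) assemble into dual W-chains, so the singularities of $X$ are Wahl; the middle block $[b_{\alpha+1},\ldots,b_{\beta-1}]=[k_{\alpha+1},\ldots,k_{\beta-1}]$ encodes the self-intersection of a single central $\mathbb{P}^1$ curve, so $X$ has exactly one exceptional curve.

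The main obstacle is the combinatorial bookkeeping in the forward direction: showing that after dualizing $[W_1,d,W_2]$ the Christophersen--Stevens zero continued fraction has precisely two marks and no more. I would attack this by induction on the total length of $W_1$ and $W_2$ using the $\mathcal{L}/\mathcal{R}$-operations of Proposition \ref{R,L-operations}. Each operation modifies a W-chain by appending a $2$ and bumping an adjacent entry; the corresponding dual operations $\mathcal L^\vee/\mathcal R^\vee$ extend the dual chain $[k_1,\ldots,k_s]$ in a controlled way, and one verifies that the two mark positions $\alpha,\beta$ track the growth compatibly. The base case $W_1=W_2=[4]$ (or $W_i=\emptyset$) can be checked directly; e.g.\ for $d=2$ one has $[4,2,4]^\vee=[2,2,4,2,2]$ and $[2,1,4,1,2]=0$, realizing the pair $(\alpha,\beta)=(2,4)$.
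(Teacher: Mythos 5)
The paper does not actually prove this proposition: it is quoted from \cite{Flipping}*{\textsection 4} as a known result, so there is no internal argument to measure yours against. Your overall strategy --- pass through the Christophersen--Stevens bijection and isolate the extremal P-resolutions as the bounded zero continued fractions carrying exactly two unit marks --- is the route taken in the cited source, your converse direction is sound in outline, and your base case checks out ($[4,2,4]^\vee=[2,2,4,2,2]$ and $[2,1,4,1,2]=0$).

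There is, however, a genuine gap in your forward direction. The claim ``since $X$ has only rational singularities, its minimal resolution agrees with the minimal resolution of $Y$'' is not a valid inference, and the conclusion is false in general: the composite of the minimal resolution $\pi\colon\widetilde{X}\to X$ with $f$ is a resolution of $Y$, but it is the \emph{minimal} one only if the strict transform $\widetilde{C}$ of $C$ is not a $(-1)$-curve. Writing $K_{\widetilde{X}}=\pi^*K_X+\sum_j a_jE_j$ with discrepancies $a_j\in(-1,0)$ at the two Wahl points, adjunction gives $K_X\cdot C=-2-\widetilde{C}^2-a_{j_1}-a_{j_2}$, which is positive with $\widetilde{C}^2=-1$ whenever $a_{j_1}+a_{j_2}<-1$; such extremal P-resolutions do occur, and for them the decomposition $[e_1,\dots,e_r]=[W_1,d,W_2]$ with $d=-\widetilde{C}^2\geq 2$ breaks down (one must first contract $\widetilde{C}$, after which the chain is no longer a concatenation of the two W-chains separated by a single entry). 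This is exactly why the computation in \cite{Flipping}*{\textsection 4} is organized around the dual fraction $\frac{m}{m-q}$ and a $\delta$-invariant rather than around $\frac{m}{q}$. To repair your argument you must either treat the $(-1)$-curve case separately, tracking the effect of the extra contraction on the dual chain, or set the induction up on the dual side from the start. A secondary point: since the statement is a bijection with \emph{pairs} $\alpha<\beta$, your forward induction must also actively exclude the possibility $\sum_i(k_i-b_i)=2$ concentrated at a single position (one mark of size two); make sure the $\mathcal{L}/\mathcal{R}$ bookkeeping is set up to deliver two distinct marked positions, not merely total defect two.
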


\begin{definition}\cite{Flipping}*{Definition 4.5}\label{WW sequence} A sequence $\{b_1,\cdots,b_s\}$, $b_i>1$ is a WW-sequence if there exists $1\leq \alpha<\beta\leq s$ such that
$$[b_1,\cdots,b_\alpha-1,\cdots,b_\beta-1,\cdots,b_s]=0.$$
The numbers $\alpha$ and $\beta$ are called the indices of the WW-sequence, and we say that the zero continued fraction $0=[b_1,\cdots,b_{\alpha}-1,\cdots, b_{\beta}-1,\cdots,b_s]$ is a WW-decomposition of the Hirzebruch-Jung continued fraction $[b_1,\cdots,b_s]$.
\end{definition}
\remark We add the notion of WW-decomposition to the original definition of WW-sequence.

\begin{definition}\cite{Wormhole}*{Definition 2.7}\label{definition of  wormhole singularity}
    A  wormhole singularity is a cyclic quotient singularity $\frac{1}{m}(1,q)$ with $0<q<m$ coprime integers, which admits at least two distinct extremal P-resolutions. Equivalently, the Hirzebruch-Jung continued fraction of $\frac{m}{m-q}$ has at least two different WW-decompositions.
\end{definition}

The set of WW-decompositions of a given fraction can be computed explicitly due to the relation between zero continued fractions and triangulated polygons, see \cite{Christophersen},\cite{Stevens},\cite{Flipping},\cite{T-Horikawas}. If $s\geq 2$, a triangulation of $s+1$ sides corresponds to drawing some diagonals over a convex polygon of $s+1$ sides (with vertices $P_0,\cdots,P_s$, ordered counterclockwise) such that: diagonals do not intersect each other (except maybe over the vertices), and the polygon is divided into triangles.
Given a triangulation, we define the index of a vertex $P_i$ as $$v_i:=(\text{number of diagonals from the vertex } P_i)+1.$$ 
The vector $(v_0,v_1,\cdots,v_n)$ is called the vector of indices of the triangulated polygon $P_0,\cdots,P_n$.

\begin{theorem}\label{1-dimensional zero fractions and triangulations}\cite{Wormhole}*{\textsection 2}
    Let $(b_1,\cdots,b_s)$ be a sequence of positive integers. Then $[b_1,\cdots,b_s]=0$ if and only if there exists a positive integer $b_0$ such that $(b_0,b_{1},\cdots,b_{s})$ is the vector of indices of some triangulated polygon of $s+1$ sides.
\end{theorem}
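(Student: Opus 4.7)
The plan is to prove the equivalence by induction on $s$, using the correspondence between ears of a triangulated polygon (vertices of index $1$) and entries equal to $1$ in a zero continued fraction. The bridge between the two sides is the elementary continued fraction identity
\begin{equation*}
[b_1,\ldots,b_{i-1},1,b_{i+1},\ldots,b_s]=[b_1,\ldots,b_{i-1}-1,b_{i+1}-1,\ldots,b_s],
\end{equation*}
which I would verify by a short direct computation (and which corresponds geometrically to the fact that removing an ear $P_i$ from a triangulated polygon turns the diagonal $P_{i-1}P_{i+1}$ into an edge, decreasing the indices of $P_{i-1}$ and $P_{i+1}$ by $1$ each). The base case $s=2$ is the triangle with vector of indices $(1,1,1)$, giving $[1,1]=0$.

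For the implication $(\Leftarrow)$, I would start from a triangulated $(s+1)$-gon with vertex indices $(b_0,b_1,\ldots,b_s)$ and use the classical fact that any triangulation of a polygon with at least four sides has at least two ears; hence at least one ear $P_i$ with $1\leq i\leq s$ can be chosen. Removing $P_i$ yields a triangulated $s$-gon with the corresponding reduced vector of indices, and the inductive hypothesis together with the identity above closes the argument. The positions $i=1$ and $i=s$ require a small separate check, since then only one of the affected neighbors lies among $b_1,\ldots,b_s$; I would handle these by a direct expansion, e.g.\ $[1,b_2,\ldots,b_s]=1-1/[b_2,\ldots,b_s]$ and $[b_1,\ldots,b_{s-1},1]=[b_1,\ldots,b_{s-2},b_{s-1}-1]$.

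For the implication $(\Rightarrow)$, I would first establish the auxiliary claim that if $[b_1,\ldots,b_s]=0$ with all $b_i\geq 1$ and $s\geq 2$, then some $b_i=1$; indeed, a reverse induction shows that if every $b_i\geq 2$ then every tail fraction $[b_i,\ldots,b_s]$ is at least $1$, contradicting the zero value. Picking such a position $i$ and applying the identity produces a zero continued fraction of length $s-1$ to which the inductive hypothesis applies, yielding a triangulated $s$-gon. Re-inserting the ear $P_i$ between its two neighbors restores the desired triangulated $(s+1)$-gon and reconstructs the required $b_0$. The main subtlety, and the only place where genuine care is needed, is in tracking the silent role of $b_0$ through the induction and handling the boundary positions $i\in\{1,s\}$ symmetrically with the interior case; once this bookkeeping is organized, both directions are short inductive arguments driven by the single continued fraction identity above.
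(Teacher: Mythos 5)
The paper does not prove this statement at all: it is quoted from \cite{Wormhole}*{\textsection 2} and used as a black box, so there is no in-paper argument to compare against. Your double induction on $s$, driven by the blow-down identity $[b_1,\ldots,b_{i-1},1,b_{i+1},\ldots,b_s]=[b_1,\ldots,b_{i-1}-1,b_{i+1}-1,\ldots,b_s]$ and the two-ears theorem, is the standard proof of this correspondence (essentially the one in the cited reference), and it is correct; note that the identities you rely on are exactly the ones the paper later records as Lemma \ref{trivial}(2)--(3). One small point to make explicit in the $(\Rightarrow)$ direction: when you pick a position with $b_i=1$ and pass to the reduced fraction, you need $b_{i-1},b_{i+1}\geq 2$ so that the new entries are still positive integers and the inductive hypothesis applies; this follows for $s\geq 3$ from the admissibility condition on Hirzebruch--Jung continued fractions (two consecutive entries equal to $1$ would force a non-positive tail), and the case $s=2$ is your base case $[1,1]=0$. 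With that check added, and with the bookkeeping of the hidden index $b_0$ at the boundary positions $i\in\{1,s\}$ carried out as you indicate (where $b_0$ itself changes by $1$), the argument is complete.
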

\begin{lemma} \label{trivial}
    Let $\mathcal{P}$ be a convex polygon with $s+1$ sides. The indices from the vertices of $\mathscr{P}$ will be taken $\text{mod } s+1$. Consider $[b_1,\cdots,b_s]=0$ for a triangulation of $\mathcal{P}$, then
    \begin{enumerate}
        \item $b_0+b_1\cdots+b_s=3(s-1)$ where $b_0$ is the positive integer from Theorem \ref{1-dimensional zero fractions and triangulations}.
        \item At least two $b_i$ must be equal to $1$. Furthermore, for $s\geq 3$, the entries equal to $1$ cannot be in consecutive positions.
        \item Let $s\geq 2$. If $[b_1,\cdots,b_{i-1},1,b_{i+1},\cdots,b_s]=0$ for $i\neq 1,s$, then $0=[b_1,\cdots,b_{i-1}-1,b_{i+1}-1,\cdots,b_s]$. If $[1,b_2,\cdots,b_s]=0$, then $[b_2-1,\cdots,b_s]=0$; if $[b_1,\cdots,b_{s-1},1]=0$, then $[b_1,\cdots,b_{s-1}-1]=0$. Furthermore, if $s\geq 4$ and $[b_1,\cdots,b_{i-1},1,b_{i+1},\cdots,b_s]=0$ for $i\neq 1,s$, then $b_{i-1}$ and $b_{i+1}$ cannot be both equal to $2$.
        \item Let $s\geq 3$. Then $b_i\leq s-1$ for any $0\leq i\leq s$. Moreover, if $b_i=s-1$, then $b_{i-1}=b_{i+1}=1$ and $b_{j}=2$ for any $j\neq i-1,i,i+1$.
    \end{enumerate}
\end{lemma}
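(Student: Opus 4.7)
The plan is to prove all four parts by direct geometric arguments on the triangulated $(s+1)$-gon $\mathscr{P}$ (with vertices $P_0,\ldots,P_s$) whose vector of indices is $(b_0,b_1,\ldots,b_s)$, appealing to Theorem \ref{1-dimensional zero fractions and triangulations}. The pivotal observation is that a vertex $P_i$ has index $b_i = 1$ precisely when it is an \emph{ear}, i.e.\ carries no incident diagonal, so that the unique triangle containing $P_i$ is $P_{i-1}P_iP_{i+1}$; this will allow me to reduce the length of the sequence via ear removal.

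For (1), I argue by double counting: $\sum_i b_i = (s+1) + 2d$, where $d$ is the number of diagonals, since each diagonal contributes $1$ to the index of each of its two endpoints while each vertex contributes the additive $+1$ from the definition of index. Since any triangulation of an $(s+1)$-gon contains exactly $s-2$ diagonals, the sum equals $(s+1) + 2(s-2) = 3(s-1)$. For (2), the existence of at least two ears is the classical "two ears theorem" for triangulated polygons, proved by induction on $s$: any triangulation admits a diagonal which splits $\mathscr{P}$ into two smaller triangulated subpolygons, each contributing at least one ear away from the splitting diagonal. For the non-consecutivity claim with $s \geq 3$, if both $P_i$ and $P_{i+1}$ were ears, the unique triangle containing $P_i$ would be $P_{i-1}P_iP_{i+1}$ and the unique triangle containing $P_{i+1}$ would be $P_iP_{i+1}P_{i+2}$; since the edge $P_iP_{i+1}$ lies in a unique triangle, this forces $P_{i-1} = P_{i+2}$, impossible when $\mathscr{P}$ has at least $4$ vertices.

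For (3), I use ear removal: if $b_i = 1$ with $1 \leq i \leq s$, delete $P_i$ together with the triangle $P_{i-1}P_iP_{i+1}$ to obtain a triangulated $s$-gon $\mathscr{P}'$. When $s \geq 3$, the segment $P_{i-1}P_{i+1}$ was a diagonal of $\mathscr{P}$ and becomes an edge of $\mathscr{P}'$, so the indices of both $P_{i-1}$ and $P_{i+1}$ drop by exactly $1$. For $i \neq 1,s$ this yields the reduction $[b_1,\ldots,b_{i-1}-1,b_{i+1}-1,\ldots,b_s]=0$. For the boundary cases $i=1$ or $i=s$, one of the two affected neighbors is $P_0$, whose new (decreased) index becomes the hidden index of $\mathscr{P}'$ and therefore disappears from the displayed sequence, yielding the stated single-decrement reductions. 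The "furthermore" clause follows by applying (2) to $\mathscr{P}'$: since $s \geq 4$ means $\mathscr{P}'$ has $\geq 4$ vertices, it cannot have two consecutive ears, but $b_{i-1} = b_{i+1} = 2$ would produce indices $1,1$ at adjacent positions of $\mathscr{P}'$.

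For (4), from $P_i$ one can draw diagonals only to the $s-2$ vertices that are neither $P_i$ nor one of its two neighbors, so $b_i \leq (s-2)+1 = s-1$. Equality forces a fan triangulation centered at $P_i$, in which $P_{i-1}$ and $P_{i+1}$ carry no diagonals (so they have index $1$), while every other $P_j$ carries exactly the single diagonal $P_jP_i$ (so it has index $2$). No step here poses a real obstacle; the only care required is the bookkeeping for the hidden index $b_0$ in the boundary cases of (3), where one must verify that the "lost" decrement is absorbed into the new hidden index of $\mathscr{P}'$ rather than into the displayed part of the reduced sequence.
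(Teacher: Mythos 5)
Your proof is correct. The paper itself offers no argument for this lemma (its proof reads only ``These properties are straightforward''), and your write-up supplies exactly the details one would expect: the double count $\sum_i b_i = (s+1)+2(s-2)$ for (1), the two-ears theorem plus the uniqueness of the triangle containing a boundary edge for (2), ear removal for (3), and the fan-triangulation characterization for (4). The bookkeeping you flag for the hidden index in the boundary cases of (3) is handled correctly. The only point you gloss over is the degenerate case $s=2$ in (3), where ear removal from a triangle does not literally produce a smaller polygon; there the claimed identities (e.g.\ $[1,1]=0$ reducing to $[0]=0$) are checked directly from the definition of a zero continued fraction, so nothing is lost.
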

\begin{proof}
    These properties are straightforward.
\end{proof}

\begin{remark}
    The family of triangulated $(s+1)$-gons of Lemma \ref{trivial}(4) (for all $s\geq 3$) is called the trivial family.
\end{remark}

The number $b_0$ in Theorem \ref{1-dimensional zero fractions and triangulations} is called the hidden index of the triangulation associated with the zero continued fraction $[b_1,\cdots,b_s]=0$. By Lemma \ref{trivial}(1), the number $b_0$ is completely determined by the linear relation
$$b_0:=3(s-1)-\sum_{i=1}^{s}b_i.$$
Therefore, we can write the zero continued fraction $[b_1,\cdots,b_s]$ as $[b_1,\cdots,b_s\mid b_0]$. The latter notation is called extended zero chain.

\remark \label{cyclically rotate entries of an extended zero chain corresponds to choose another hidden index} Let $[b_1,b_2,\cdots,b_{s}\mid b_0]$ be an extended zero chain. By Theorem \ref{1-dimensional zero fractions and triangulations}, there exists a triangulated $(s+1)$-gon $\mathscr{P}$ with vector of indices $(b_0,b_1,\cdots,b_s)$, where $b_0$ is the hidden index. By relabeling the vertices we can consider the hidden index as the index corresponding $b_i$. It corresponds to a cyclic permutation of the entries of the extended zero chain $[b_0,b_1,\cdots,b_s\mid b_0]$ in such a way that $b_i$ is the new hidden index of the extended zero chain. Therefore, a triangulated $(s+1)$-gon encodes $(s+1)$ extended zero chains, one for each choice of a hidden index.

\begin{lemma}\label{v_0 is well-defined for a wormhole.}
    Let $\{k_1,\cdots,k_s\}$ be a WW-sequence. Assume that $0=[b_1,\cdots,b_s]$ and $0=[b'_1,\cdots,b'_s]$ are WW-decompositions of $[k_1,\cdots,k_s]$. If $[b_1,\cdots,b_s \hspace{0,2em} \mid \hspace{0,2em}v_0]$ and $[b'_1,\cdots,b'_s \hspace{0,2em} \mid \hspace{0,2em}v_0']$ are the corresponding extended zero chains, then $v_0=v_0'$.
\end{lemma}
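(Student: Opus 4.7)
The plan is to read off $v_0$ from the sum of the entries of the zero chain by means of Lemma \ref{trivial}(1), and then observe that both WW-decompositions of $[k_1,\dots,k_s]$ have the same sum of entries.

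More precisely, by Definition \ref{WW sequence}, a WW-decomposition of $[k_1,\dots,k_s]$ is obtained by choosing a pair of indices $1\leq\alpha<\beta\leq s$ and subtracting $1$ from $k_\alpha$ and $k_\beta$ while leaving the other entries unchanged. Consequently, for any WW-decomposition $0=[b_1,\dots,b_s]$ of $[k_1,\dots,k_s]$ we have
\[
\sum_{i=1}^{s} b_i \;=\; \Bigl(\sum_{i=1}^{s} k_i\Bigr) - 2,
\]
where the right-hand side depends only on the WW-sequence $\{k_1,\dots,k_s\}$ and not on the particular choice of indices $(\alpha,\beta)$. Hence the assumption that $0=[b_1,\dots,b_s]$ and $0=[b'_1,\dots,b'_s]$ are both WW-decompositions of $[k_1,\dots,k_s]$ gives $\sum_i b_i=\sum_i b'_i$.

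Now I would invoke Lemma \ref{trivial}(1), which expresses the hidden index of an extended zero chain as
\[
v_0 \;=\; 3(s-1)-\sum_{i=1}^{s} b_i,
\]
and analogously $v'_0=3(s-1)-\sum_{i=1}^{s} b'_i$. Combining with the equality of sums established above yields $v_0=v'_0$, which is the claim. There is no real obstacle here: the statement is essentially a bookkeeping consequence of the linear relation satisfied by the indices of a triangulated polygon, together with the fact that every WW-decomposition decreases the total sum of a fixed WW-sequence by exactly $2$.
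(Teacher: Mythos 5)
Your argument is correct and coincides with the paper's own proof: both use that a WW-decomposition lowers the total sum of the sequence by exactly $2$, and then read off the hidden index from the relation $v_0=3(s-1)-\sum_{i}b_i$ of Lemma \ref{trivial}(1). Nothing further is needed.
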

\begin{proof}
    By Definition \ref{WW sequence}, the entries of these zero fractions were obtained by subtracting 1 from $[k_1,\cdots,k_s]$ in exactly two different positions, so
    \begin{align}\label{v_0=v_0', subtract exactly 2}
    2+\sum_{i=1}^{s}b_i=\sum_{i=1}^{s}k_i=2+\sum_{i=1}^{s}b'_i.
    \end{align}
    By Lemma \ref{trivial}(1), we obtain the relations
    \begin{align}\label{v_0=v_0', both set of indices are equal to a constant number}
        v_0+\sum_{i=1}^{s}b_{i}=3(s-1)=v_0'+\sum_{i=1}^{s}b'_i.
    \end{align}
    The relations (\ref{v_0=v_0', subtract exactly 2}) and (\ref{v_0=v_0', both set of indices are equal to a constant number}) give the desired conclusion.
\end{proof}

\begin{definition}\label{WW-index}
    The WW-index of a WW-sequence is the hidden index of any of its WW-decompositions.
\end{definition}

\begin{definition} \label{definition of basic wormhole singularity}
    Let $0<q<m$ be coprime integers, and let $\frac{m}{m-q}=[k_1,\cdots,k_s]$ where $k_i\geq 2$. The cyclic quotient singularity $\frac{1}{m}(1,q)$ is a basic  wormhole singularity if $\frac{1}{m}(1,q)$ is a wormhole singularity and the WW-index of the WW-sequence $\{k_1,\cdots,k_s\}$ is greater than $1$.
\end{definition}

\begin{definition}\label{m-cyclic permutation}
    Let $\{b_1,\cdots,b_s\}$ be a WW-sequence, and let $b_0$ be its WW-index. Denote by $P\colon \mathbb{Z}^{s+1}\to \mathbb{Z}^{s+1}$ the cyclic permutation map $(x_1,\cdots,x_{s},x_{s+1})\mapsto (x_{s+1},x_1,\cdots,x_{s})$. For $r\geq 0$, the $r$-cyclic permutation of $\{b_1,\cdots,b_s\}$ is the sequence obtained by dropping the last entry of the vector $P^r(b_1,\cdots,b_s,b_0)$. The $0$-cyclic permutation of $\{b_1,\cdots,b_s\}$ is simply $\{b_1,\cdots,b_s\}$.
\end{definition}

\begin{lemma}\label{bound for the number of WW-decompositions under cyclic permutation}
    Let $\{b_1,\cdots,b_s\}$ be a WW-sequence with $k$ WW-decompositions. If $b_{s-(r-1)}\geq 3$, then the $r$-cyclic permutation of $\{b_1,\cdots,b_s\}$ is a WW-sequence with at least $k$ WW-decompositions.
\end{lemma}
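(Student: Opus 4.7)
The plan is to translate the statement into the triangulated polygon picture of Theorem~\ref{1-dimensional zero fractions and triangulations}, realize the $r$-cyclic permutation as a reassignment of the hidden vertex of the same triangulation, and use the hypothesis $b_{s-(r-1)}\geq 3$ to show that this new hidden vertex is never a marked vertex in any of the $k$ WW-decompositions. This will produce an injection from the WW-decompositions of $\{b_1,\ldots,b_s\}$ to those of its $r$-cyclic permutation.

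First I would recall that every WW-decomposition $[b_1-\delta_1,\ldots,b_s-\delta_s]=0$ of $\{b_1,\ldots,b_s\}$ realizes a triangulated $(s+1)$-gon with vertex indices $(b_0,b_1-\delta_1,\ldots,b_s-\delta_s)$, where exactly two $\delta_i$ equal $1$ (the marks) and $b_0$ is the hidden index, which coincides with the WW-index by Lemma~\ref{v_0 is well-defined for a wormhole.}. By Remark~\ref{cyclically rotate entries of an extended zero chain corresponds to choose another hidden index}, the $r$-cyclic permutation amounts to reading the same triangulation with $P_{s-(r-1)}$ promoted to hidden vertex. Combining Lemma~\ref{trivial}(1) with the two-mark relation $\sum_{i=1}^s b_i=3s-1-b_0$ then gives that the new WW-index is exactly $b_{s-(r-1)}$.

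The crucial step is the following claim: if $b_{s-(r-1)}\geq 3$, then $P_{s-(r-1)}$ is not a mark in any WW-decomposition of $\{b_1,\ldots,b_s\}$. By Lemma~\ref{trivial}(2), every triangulation has at least two ears (vertices of index $1$). A visible vertex $P_i$ ($i\geq 1$) of a WW-decomposition has index $b_i-\delta_i$, which since $b_i\geq 2$ equals $1$ only when $b_i=2$ and $\delta_i=1$; that is, visible ears occur only at marks whose $b$-value is $2$. In the basic wormhole regime $b_0\geq 2$, in which this lemma will be invoked, $P_0$ is not itself an ear, so both marks must sit at positions with $b_i=2$; the hypothesis $b_{s-(r-1)}\geq 3$ then excludes $P_{s-(r-1)}$ from being one of them.

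Once the claim is in place, reassigning $P_{s-(r-1)}$ as the hidden vertex of each of the $k$ triangulations preserves the two marks in the new labeling and yields a zero continued fraction whose hidden index equals the new WW-index $b_{s-(r-1)}$; distinct old triangulations yield distinct new ones, giving the required injection. That the new sequence is itself a WW-sequence reduces to checking that each of its entries (a cyclic rearrangement of $b_0,b_1,\ldots,b_s$ with $b_{s-(r-1)}$ removed) is at least $2$, which holds under the basic wormhole assumption. The main obstacle is the clean ear/mark dichotomy of the previous paragraph; once that is pinned down the rest is bookkeeping on hidden vertices and marks in the triangulated polygon.
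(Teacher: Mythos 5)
Your proof is correct and takes essentially the same route as the paper's (which simply rotates the extended zero chains and asserts that $b_{s-(r-1)}\geq 3$ makes the rotated chains WW-decompositions): you realize the $r$-cyclic permutation as a reassignment of the hidden vertex and verify that both marks remain visible. You are in fact more careful than the paper: your observation that the argument needs the WW-index to satisfy $b_0\geq 2$ --- so that both marks sit at positions with $b_i=2$ and the rotated sequence has no entry equal to $1$ --- pins down a hypothesis the lemma silently omits (for the degenerate case $\{3,2,2,3\}$, which has WW-index $1$, taking $r=4$ gives $b_{s-(r-1)}=3$ yet the $4$-cyclic permutation $\{2,2,3,1\}$ is not a WW-sequence), although $b_0\geq 2$ does hold in every application the paper makes of this lemma.
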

\begin{proof}
    Each WW-decomposition of $\{b_1,\cdots,b_s\}$ can be expressed as an extended zero chain. By cyclic permutation of the numbers in these zero chains, we obtain other extended zeros chains (Remark \ref{cyclically rotate entries of an extended zero chain corresponds to choose another hidden index}). The condition $b_{s-{(r-1)}}\geq 3$ ensures that the corresponding zero chains are WW-decompositions of the $r$-cyclic permutation of $\{b_1,\cdots,b_s\}$.
\end{proof}

\section{Classification of  wormhole singularities}

In this section we classify  wormhole singularities. We consider the following approach:
\begin{enumerate}
    \item Reduce the classification of wormhole singularities to the classification of basic wormhole singularities via the HTU algorithm (Lemma \ref{HTU algorithm}). 
    \item Define basic wormhole triangulations (Definition \ref{definition of basic wormhole triangulation}) and note that we can classify basic wormhole singularities by classifying basic wormhole triangulations (Remark \ref{It is enough to classify basic womrhole triangulations}).
    \item Classify candidates to basic wormhole triangulations: accordion triangulations (Definition \ref{Definition of accordion triangulation}).
    \item Introduce the coherent graph of a framed triangulated polygon (Definition \ref{coherent graph}) and its properties.
    \item Define a standard frame for accordion triangulations (Definition \ref{standard framed}). Then define standard family of accordion triangulations and coherent rotation of diagonals (Definition \ref{Standard family of accordion triangulation of weight n and coherent rotation of diagonals for elements of the standard family of weight n}).
    \item Classify framed accordion triangulations with a standard frame that are basic wormhole triangulations with a standard frame (Theorem \ref{Main theorem}).
    \item Basic wormhole triangulations algorithm (Corollary \ref{Basic wormhole triangulations algorithm}).
\end{enumerate}

Let $[b_1,\cdots,b_s]$ be the dual chain of a wormhole singularity such that the WW-index (Definition \ref{WW-index}) of the WW-sequence $\{b_1,\cdots,b_s\}$ is equal to 1. By Theorem \ref{1-dimensional zero fractions and triangulations}, each WW-decomposition of $[b_1,\cdots,b_s]$ corresponds to a triangulated $(s+1)$-gon with hidden index equal to $1$. In each WW-decomposition, by Lemma \ref{trivial}(2), one of the entries must be equal to $1$. 
Furthermore, we must have $b_1,b_s\geq 2$ unless the polygon is a triangle (i.e. $s=2$). If $s>2$, by Definition \ref{WW sequence}, either $b_1=2$ or $b_s=2$. Let $P$ be the vertex $P_1$ or $P_s$ that does not correspond to that entry, let $\Delta$ be the triangle $P_{s},P_0$,$P_1$, and let $\ell$ be the diagonal connecting $P_1$ and $P_{s}$.
Removing $(\Delta\smallsetminus \ell)$ from the triangulated $(s+1)$-gon produces a triangulated $s$-gon, and we choose its hidden index as the vertex $P$ in the new polygon. By Lemma \ref{trivial} (3), it gives a new zero fraction of length $s-1$. 
We do the same for every WW-decomposition of $[b_1,\cdots,b_s]$ and
continue with this algorithm until we obtain a WW-sequence with WW-index $v_0>1$. We call this algorithm the HTU algorithm because Hacking-Tevelev-Urzúa used this idea in \cite{Flipping}*{\textsection 4} to prove some facts about wormhole singularities by reducing the analysis to the case of basic wormhole singularities (Definition \ref{definition of basic wormhole singularity}). There is a degenerate case of this algorithm, which is a missing case in the argument in \cite{Flipping},\cite{LibroGian}, and \cite{Wormhole}, where this algorithm does not finish in a WW-sequence with WW-index $v_0>1$. We prove the following:

\begin{lemma}\label{HTU algorithm}
    Let $\{b_1,\cdots,b_s\}$ be a WW-sequence with at least two different WW-decompositions. If the WW-index of $\{b_1,\cdots,b_s\}$ is equal to $1$, then the HTU algorithm produces either a WW-sequence with WW-index greater than $1$, or the WW-sequence $\{3,\cdots,3,2,2,3,\cdots,3\}$ with WW-index equal to $1$. In both cases, the output of the HTU algorithm is a WW-sequence that has the same number of WW-decompositions as $[b_1,\cdots,b_s]$
\end{lemma}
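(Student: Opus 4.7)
The plan is to prove the lemma by induction on the length $s$ of the WW-sequence, using the bijection between zero continued fractions and triangulated polygons from Theorem \ref{1-dimensional zero fractions and triangulations}. At each stage I analyze what the HTU reduction does to every WW-decomposition simultaneously.

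\emph{A single reduction step.} Fix a WW-sequence $\{b_1,\ldots,b_s\}$ with WW-index $1$ and at least two WW-decompositions, and suppose (as the algorithm requires) that $b_1=2$ or $b_s=2$; say WLOG $b_1=2$. For every WW-decomposition $[c_1,\ldots,c_s]$ the triangulated $(s+1)$-gon has hidden index $1$ at $P_0$, so by Lemma \ref{trivial}(2) neither $c_1$ nor $c_s$ can equal $1$. Since $b_1=2$, one cannot subtract at position $1$ without producing $c_1=1$; hence position $1$ is never one of the two subtraction indices and $c_1=b_1=2$ in every WW-decomposition. Consequently $P_0$ is an ear with $P_1P_s$ a diagonal, the triangle $P_0P_1P_s$ can be cut off, and Lemma \ref{trivial}(3) produces a valid zero chain of length $s-1$ with $P_s$ as the new hidden index. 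Since positions $2,\ldots,s-1$ of $[c_1,\ldots,c_s]$ are untouched and only $c_1$ and $c_s$ are modified in a uniform way, all the reduced zero chains are WW-decompositions of a single new WW-sequence $\{b_1',\ldots,b_{s-1}'\}$.

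\emph{Bijection on WW-decompositions.} I invert the reduction by gluing a triangle along the side of the reduced $s$-gon that corresponds to the diagonal $\ell=P_1P_s$, introducing a new vertex $P_0$ of index $1$. This inverse pairs WW-decompositions on each side, preserving the total count. Iterating, every step of the HTU algorithm strictly decreases $s$ while preserving the number of WW-decompositions, so the algorithm must terminate in finitely many steps.

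\emph{Termination and the degenerate case.} Termination occurs either because the WW-index of the current sequence has become strictly larger than $1$ (the first alternative in the lemma), or because the precondition $b_1=2$ or $b_s=2$ has failed, i.e.\ the current WW-sequence satisfies $b_1,b_s\geq 3$. It remains to identify this "stuck" configuration with $\{3,\ldots,3,2,2,3,\ldots,3\}$; this is the main combinatorial obstacle.

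\emph{Classifying the stuck configuration.} From Lemma \ref{trivial}(1) and WW-index $1$, one gets $\sum_{i=1}^{s}b_i=3s-2$, so the total excess $\sum(b_i-2)=s-2$. In any WW-decomposition $[c_1,\ldots,c_s]$ one has $c_1,c_s\geq 2$; since $b_1,b_s\geq 3$, the required entry equal to $1$ from Lemma \ref{trivial}(2) must occur at an interior position $i_0\in\{\alpha,\beta\}$ with $b_{i_0}=2$. The existence of two distinct WW-decompositions provides two such interior positions $i_0,i_0'$ with $b_{i_0}=b_{i_0'}=2$, each supplying a second subtraction index located at position $1$ or $s$ (since $b_1,b_s\geq 3$ allow $c_1=2$ or $c_s=2$ to become the decremented entry). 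Analysing the reduced zero chains obtained by successively eliminating the interior $1$'s via Lemma \ref{trivial}(3), and invoking the last clause of Lemma \ref{trivial}(3) that forbids two adjacent $2$'s next to the eliminated $1$, forces the two positions with $b_i=2$ to be adjacent. The excess count $\sum(b_i-2)=s-2$ with $b_1=b_s=3$, the two middle entries equal to $2$, and all the remaining entries at least $2$ then pins down each remaining $b_i$ to exactly $3$: any larger value would both violate the sum and, by direct evaluation, kill one of the candidate zero continued fractions. This yields the form $\{3,\ldots,3,2,2,3,\ldots,3\}$ and completes the proof. The hard part is the last paragraph: ruling out configurations like two non-adjacent $2$'s, or isolated $b_i\geq 4$, where one must check by the recursion on zero chains that at most one of the candidate pairs $(\alpha,\beta)$ yields a valid WW-decomposition.
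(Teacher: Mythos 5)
There is a genuine gap, and it occurs at the step your whole argument leans on: the claim that after cutting the ear at $P_0$ ``all the reduced zero chains are WW-decompositions of a single new WW-sequence'' because ``only $c_1$ and $c_s$ are modified in a uniform way.'' With $b_1=2$ you correctly get $c_1=2$ in every WW-decomposition, but $c_s$ is \emph{not} uniform: when $b_s\geq 3$, position $s$ may be a subtraction index for some WW-decompositions and not for others, so the new hidden index $c_s-1$ takes different values on different decompositions, and by Lemma \ref{v_0 is well-defined for a wormhole.} the reduced chains then cannot all be WW-decompositions of one WW-sequence. Concretely, take $\{2,3,2,2,4\}$ (WW-index $1$, the two WW-decompositions being $[2,2,2,1,4]$ at indices $(2,4)$ and $[2,3,1,2,3]$ at indices $(3,5)$): your reduction with new hidden index at $P_5$ produces $[1,2,2,1\mid 3]$ and $[1,3,1,2\mid 2]$, which belong to different WW-sequences. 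The reduction only preserves the count of WW-decompositions if the new hidden index is placed at an end vertex whose position is a subtraction index of \emph{no} WW-decomposition (here $P_1$, which yields $[2,2,1,3\mid 1]$ and $[3,1,2,2\mid 1]$, both decompositions of $\{3,2,2,3\}$); this is the content of the paper's remark that ``the choice of frame ensures we keep the number of WW-decompositions constant,'' and your proof chooses the wrong vertex.

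The termination analysis inherits this problem. You declare the algorithm stuck exactly when $b_1,b_s\geq 3$ and then assert that each WW-decomposition must place one of its two subtraction indices at position $1$ or $s$; neither claim is correct. For example, gluing an ear onto the wormhole of Figure \ref{rotation of diagonals, a companion triangulation} produces the WW-sequence $\{3,2,3,2,2,5,3,2,2,3,2,2,5,4\}$ with WW-index $1$, two WW-decompositions at indices $(1,8)$ and $(4,11)$, and $b_1=3$, $b_{14}=4$: the decomposition $(4,11)$ has both subtraction indices interior, and the algorithm is not stuck at all, since taking the new hidden index at $P_{14}$ (never a subtraction position) returns a WW-sequence with WW-index $3$. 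The genuinely stuck configuration is the one in which \emph{both} end positions occur as subtraction indices of some WW-decomposition, and it is this configuration that must be shown to equal $\{3,\cdots,3,2,2,3,\cdots,3\}$. Finally, even for that configuration your concluding paragraph only gestures at the count $\sum(b_i-2)=s-2$ and admits the adjacency of the two $2$'s and the values $b_i=3$ are ``the hard part''; the paper closes this step differently, by observing that the two relevant zero fractions force dual W-chains and invoking their generation from $[2,2,2]$ by the end-operations $\mathcal{L}^{\vee}$ and $\mathcal{R}^{\vee}$, which is the ingredient your argument is missing.
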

\begin{proof}
    It follows immediately from the definition of the HTU algorithm. The choice of frame after removing the corresponding triangle ensures that we keep the number of WW-decompositions constant throughout this algorithm. The degenerate case $[3,\cdots,3,2,2,3,\cdots,3]$ is a missing case in the argument in \cite{Flipping},\cite{LibroGian}, and \cite{Wormhole}. It comes from intermediate WW-sequences $\{b_1',\cdots,b'_{s_k}\}$ with hidden index equal $1$ that we produce while running the HTU algorithm. Specifically, two of its WW-decompositions have indices $1<\beta_1$ and $\alpha_2<s_k$. Since the hidden index is $1$, then $b'_{1}=3$ and $b'_{s_k}=3$.
    By Lemma \ref{trivial}(3), they descend to $0=[b'_1-1,b'_2,\cdots,b'_{s_k}]$ and $0=[b'_1,\cdots,b'_{s_k-1},b'_{s_k}-1]$. By \cite{LibroGian}*{Prop. 1.18}, the WW-sequences associated with each of these zero fractions are dual W-chains. 
    Since dual W-chains are constructed via $\mathcal{L}^{\vee}$-operations and $\mathcal{R}^{\vee}$-operations from the initial chain $[2,2,2]$ and these operations only change the ends of the fractions in each steps, it is straightforward to check that the only possibility is $[3,\cdots,3,2,2,3,\cdots,3]$.
\end{proof}
\remark\label{Degenerate case has 2 WW-decompositions} The WW-sequence $\{3,\cdots,3,2,2,3,\cdots,3\}$ has exactly two WW-decompositions.

\remark To classify wormhole singularities, it is sufficient to classify basic wormhole singularities and apply the HTU algorithm backwards.

\begin{definition}\label{definition of basic wormhole triangulation}
    A wormhole triangulation is a triangulation that represents any of the WW-decompositions of the dual fraction of a  wormhole singularity. A basic wormhole triangulation is a wormhole triangulation associated with a basic wormhole singularity.
    If $\mathscr{P}$ is a basic wormhole triangulation associated with a basic wormhole singularity, then any other triangulation associated with other WW-decomposition of the same basic wormhole singularity is called a companion of $\mathscr{P}$.
\end{definition}

\begin{definition}\label{Definition of accordion triangulation} 
    An accordion triangulation of $s+1$ sides with base at $b_0$ is a triangulated $(s+1)$-gon $\mathscr{P}$ with vertices $b_0,b_1,\cdots,b_s$ constructed recursively as follows:
    \begin{itemize}
        \item{\textbf{Step I}}: Let $\mathcal{P}$ be a convex polygon with vertices $b_0,b_1,\cdots,b_s$. Draw the diagonal $\ell_0$ from $b_0$ to $b_2$, and define $\Delta_0$ as the triangle with vertices $b_0, b_1$ and $b_2$. Set $J=1$ and continue to Step II.
        \item{\textbf{Step II}}: We have two options:
        \begin{itemize}
            \item If $1\leq J\leq (s+1)-4$: 
            Let $\mathscr{P}^{(J)}$ be the polygon obtained from removing $\bigcup_{k=0}^{J-1}\left(\Delta_{k}\smallsetminus \ell_{k}\right)$ from $\mathcal{P}$.
            Let $b'_0$ and $b'_{1}$ be the vertices in $\ell_{J-1}\cap \mathcal{P}$. Let $b'_{L}$ be the vertex adjacent to $b'_0$ in $\mathscr{P}^{(J)}$ that is not $b'_{1}$, and let $b'_{R}$ be the vertex adjacent  to $b'_1$ in $\mathscr{P}^{(J)}$ that is not $b'_{0}$.
            Choose the diagonal $\ell_{J}$ as either the diagonal from $b'_{0}$ to $b'_{R}$ or the diagonal from $b'_{1}$ to $b'_{L}$. Define $\Delta_J$ as the triangle in $\mathscr{P}^{(J)}$ determined by $\ell_J$, redefine $J$ as $J+1$, and return to Step II.
            \item If $J=(s+1)-3$: Continue to Step III. 
        \end{itemize} 
        \item{\textbf{Step III}}: Let $\mathscr{P}$ be the convex polygon $\mathcal{P}$ triangulated with the diagonals $\ell_0,\ell_1,\cdots,\ell_{(s+1)-4}$.
    \end{itemize}
    An accordion triangulation is an accordion triangulation of $s+1$ sides with base at $b_0$ for some $s\geq 3$ and some vertices $b_0,b_1,\cdots,b_{s}$.
\end{definition}

\remark \label{Every accordion triangulation of s+1 sides with s>3 has a weight adjacent to a 1.} By Lemma \ref{trivial}(3), every index one vertex of a triangulated $(s+1)$-gon with $s\geq 4$ is adjacent to at least one vertex of index greater than $2$. By Definition \ref{Definition of accordion triangulation}, if the triangulated $(s+1)$-gon $\mathscr{P}$ is an accordion triangulation with $(s+1)\geq 4$, then each index one vertex of $\mathscr{P}$ is adjacent to exactly one (not necessarily the same for both index one vertices) vertex of index greater than $2$.
Since every accordion triangulation is constructed from the data of a base vertex together with a quite particular set of instructions in how we draw the diagonals, this observation tells us that for $(s+1)\geq 4$ this data is equivalent to give the position of exactly one index 1 vertex, the position of all vertices of index greater than $2$ and the exact index of each of these vertices.

\begin{figure}[h]
    \includegraphics[width=10cm]{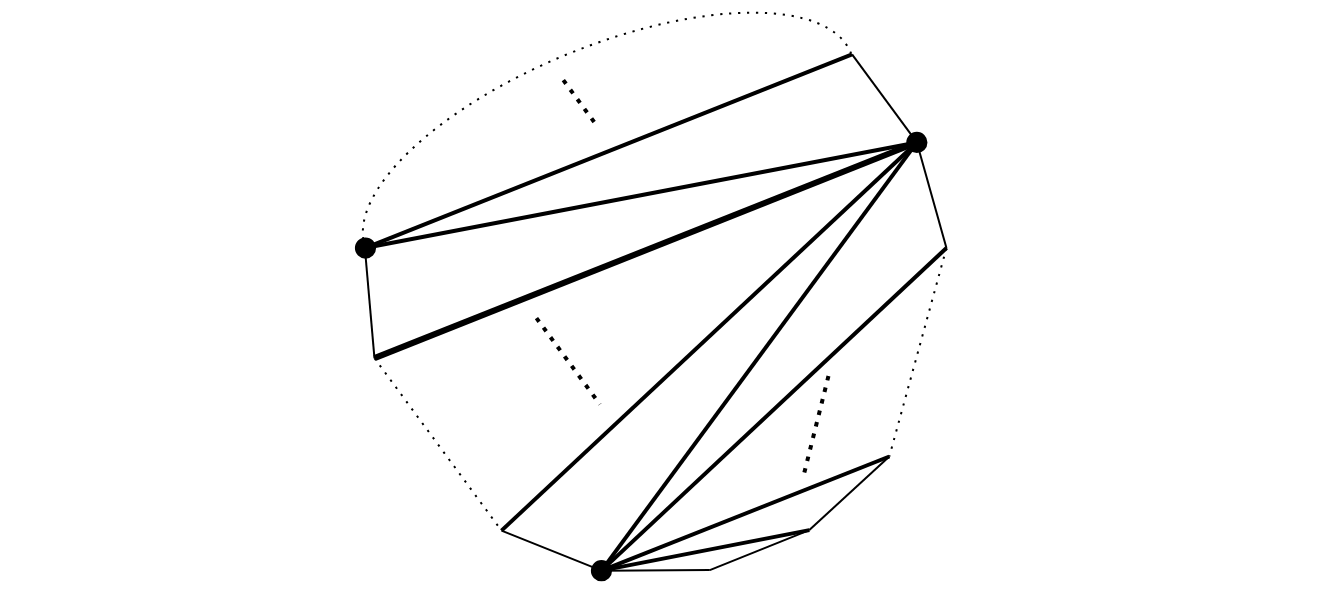}
    \caption{Construction of an accordion triangulation.}
\end{figure}

\begin{lemma}\label{every triangulation with exactly two 1's is an accordion triangulation}
    Let $\mathscr{P}$ be a triangulated polygon. Then $\mathscr{P}$ has exactly two index $1$ vertices if and only if $\mathscr{P}$ is an accordion triangulation.
\end{lemma}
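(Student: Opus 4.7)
The plan is to prove both implications by induction on $s$, relying on the crucial observation that in any triangulation of a convex polygon, the index-1 vertices are in bijection with the ``ear'' triangles (those with two polygon edges): each index-1 vertex lies in a unique triangle, and that triangle must have both of its polygon-edges incident to that vertex. Thus counting index-1 vertices is the same as counting ear triangles.

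For the forward direction, let $\mathscr{P}$ be an accordion of $(s+1)$ sides with base $b_0$, built via diagonals $\ell_0, \ldots, \ell_{s-3}$ and triangles $\Delta_0, \ldots, \Delta_{s-2}$. A direct inspection of Definition~\ref{Definition of accordion triangulation} shows that the only triangles with two polygon edges are $\Delta_0$ (with polygon edges $b_0 b_1, b_1 b_2$ and diagonal $\ell_0$) and the final triangle $\Delta_{s-2}$ (whose unique diagonal is $\ell_{s-3}$); every intermediate $\Delta_J$ for $1 \le J \le s-3$ has one polygon edge and two diagonals $\ell_{J-1}, \ell_J$. Therefore the only index-1 vertices are $b_1 \in \Delta_0$ and the vertex of $\Delta_{s-2}$ opposite $\ell_{s-3}$.

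For the backward direction, I induct on $s \ge 3$, the base case $s=3$ being immediate since any triangulated 4-gon has exactly two index-1 vertices and is an accordion with base at either endpoint of its diagonal. For $s \ge 4$, let $\mathscr{P}$ have exactly two index-1 vertices $u,v$, let $w_1,w_2$ be the polygon neighbors of $u$, and remove the ear $\{u,w_1,w_2\}$ (turning the diagonal $w_1 w_2$ into a new polygon edge) to obtain a triangulated $s$-gon $\mathscr{P}'$. Only the indices of $w_1,w_2$ change, each decreasing by one. By Lemma~\ref{trivial}(2) applied to $\mathscr{P}'$, at least one of $w_1,w_2$ must become index 1; and I rule out the case that both do, for then the triangle across $w_1 w_2$ would have the form $w_1 w_2 w_3$ with $w_3$ polygon-adjacent to both $w_1$ and $w_2$, forcing $\mathscr{P}$ to be a 4-gon and contradicting $s \ge 4$. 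Hence $\mathscr{P}'$ has exactly two index-1 vertices, say $v$ and $w_2$. By induction $\mathscr{P}'$ is an accordion, and by the forward direction its initial cut-off vertex $b_1^{\mathscr{P}'}$ is either $v$ or $w_2$; reversing the labeling if necessary, assume $b_1^{\mathscr{P}'} = w_2$, so $\ell_0^{\mathscr{P}'} = w_1 w_3$ with $w_3$ the polygon neighbor of $w_2$ opposite $u$. Choosing the base $b_0^{\mathscr{P}'} = w_1$ and prepending the ear at $u$ yields an accordion on $\mathscr{P}$ with $b_0 = w_1$, $b_1 = u$, $b_2 = w_2$, $\ell_0 = w_1 w_2$, and $\ell_{J+1}$ equal to the $J$-th diagonal of $\mathscr{P}'$'s accordion for $J \ge 0$.

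The main obstacle will be bookkeeping: verifying that after prepending the new ear, each inherited diagonal $\ell_{J+1}$ satisfies the ``either $b'_0 b'_R$ or $b'_1 b'_L$'' rule of Step~II of Definition~\ref{Definition of accordion triangulation}. This reduces to two observations that are both straightforward once unpacked: first, an accordion can be reversed (exchanging its two ears) to give us freedom to force $b_1^{\mathscr{P}'} = w_2$; second, given a choice of base $b_0^{\mathscr{P}'} \in \{w_1, w_3\}$ on $\mathscr{P}'$, the Step~II alternative selected at each stage of $\mathscr{P}'$'s accordion translates transparently to the corresponding alternative in $\mathscr{P}$'s accordion, because $\mathscr{P}^{(2)}$ in the latter coincides with $\mathscr{P}^{(1)}$ in the former.
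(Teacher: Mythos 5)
Your proof is correct, and it rests on the same basic move as the paper's: peel off the ear at an index-$1$ vertex and recurse on the resulting $s$-gon. The differences are in how the inductive step is closed. The paper never re-attaches anything: after removing the ear it argues directly that the next diagonal is forced to be of accordion type, because any other diagonal from $b_0'$ or $b_1'$ would split $\mathscr{P}^{(1)}$ into two sub-polygons of at least $4$ sides, each contributing an index-$1$ vertex to $\mathscr{P}$ in addition to $b_1$, contradicting the count of exactly two; the accordion structure is then built diagonal by diagonal. You instead run a genuine induction: you show $\mathscr{P}'$ has exactly two index-$1$ vertices (your exclusion of the case where both $w_1,w_2$ drop to index $1$ is also immediate from Lemma \ref{trivial}(2), since they are adjacent in $\mathscr{P}'$), invoke the induction hypothesis wholesale, and then glue the ear back on. This costs you two extra observations the paper does not need --- the explicit identification of the two index-$1$ vertices of an accordion as $b_1$ and the last tip (i.e.\ the forward direction, which you prove via the ear/index-$1$ bijection and which the paper leaves implicit), and the reversibility of an accordion structure so that the cut-off vertex of $\mathscr{P}'$ can be taken to be $w_2$ --- but it buys a cleaner, fully spelled-out induction, including the converse direction and the compatibility check that the inherited diagonals satisfy the Step~II rule, which the paper compresses into ``the result follows recursively.'' Both arguments are sound; yours is longer but more self-contained.
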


\begin{proof}
    Let $\mathscr{P}$ be a triangulated polygon of $s+1$ sides with exactly two index 1 vertices. By definition, $\mathscr{P}$ is constructed from a convex polygon of $s+1$ sides $\mathcal{P}$ with vertices $b_0,b_1,\cdots,b_{s}$ by drawing some diagonals over $\mathcal{P}$. If $(s+1)=4$, the result is obvious. Assume that $(s+1)\geq 5$.
    By Lemma \ref{trivial}(2), every triangulated polygon must have at least two index $1$ vertices. Without loss of generality assume that $b_1$ is one of the index $1$ vertices of $\mathscr{P}$, i.e. there is a diagonal $\ell_0$ between $b_0$ and $b_2$. It determines a triangle $\Delta_0$ with vertices $b_0$, $b_1$, and $b_2$.
    Let $\mathscr{P}^{(1)}$ be the polygon obtained from removing $(\Delta_0\smallsetminus \ell_0)$ and denote by $b_0'$ and $b_1'$ the vertices in $\ell_0 \cap \mathscr{P}^{(1)}$.
    Since $\mathscr{P}^{(1)}$ is a polygon of $s$ sides that must be triangulated, by Lemma \ref{trivial}(2), we must have at least two index 1 vertices for such a triangulation of $\mathscr{P}^{(1)}$ and these index 1 vertices cannot be in consecutive positions. This implies that we must have at least one diagonal from $b_0'$ or from $b_1'$.
    If we do not have a diagonal as the diagonal $\ell_1$ in the Definition \ref{Definition of accordion triangulation}, then any diagonal $l_1'$ from $b_0'$ or $b_1'$ must divide the polygon $\mathscr{P}^{(1)}$ into two polygons of at least $4$ sides (counting $l'_1$ as one side for each).
    Each of these sub-polygons must be triangulated, and therefore, each must contribute at least one vertex of index $1$ to the triangulation $\mathscr{P}$ (Lemma \ref{trivial}). 
    Since $l'_1$ is not as $\ell_1$ in Definition \ref{Definition of accordion triangulation}, then gluing $(\Delta_0 \smallsetminus \ell_0)$ to the triangulation defined over $\mathscr{P}^{(1)}$ does not change any index one vertex of the triangulation over $\mathscr{P}^{(1)}$ and $(\Delta_0 \smallsetminus \ell_0)$ contributes with another index $1$ vertex for $\mathscr{P}$.
    Since the triangulation $\mathscr{P}$ is the same as the triangulation over $\mathscr{P}^{(1)}$ glued with $(\Delta_0 \smallsetminus \ell_0)$, it contradicts the fact that $\mathscr{P}$ has exactly two index $1$ vertices. Therefore, the diagonal $\ell_1$ must be as in Definition \ref{Definition of accordion triangulation}. The result follows recursively.
\end{proof}

\begin{definition} \label{definition of framed triangulation}
    A framed triangulated polygon $\mathscr{P}$ is a triangulated polygon together with the choice of a hidden index. That is, $\mathscr{P}$ is given by an extended zero fraction.
\end{definition}

\begin{proposition}
    \label{basic wormhole are accordion triangulations}
    Let $\mathscr{P}$ be a basic wormhole triangulation. Then $\mathscr{P}$ is a framed accordion triangulation.
\end{proposition}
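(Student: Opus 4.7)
The plan is to reduce the statement to Lemma \ref{every triangulation with exactly two 1's is an accordion triangulation} by showing that a basic wormhole triangulation has exactly two vertices of index $1$. First I would unwind the definitions: $\mathscr{P}$ represents a WW-decomposition $[b_1,\cdots,b_s \mid v_0] = 0$ of the dual fraction $\frac{m}{m-q} = [k_1,\cdots,k_s]$ of a basic wormhole singularity. Here the vector of indices of $\mathscr{P}$ is $(v_0,b_1,\cdots,b_s)$, and by Lemma \ref{v_0 is well-defined for a wormhole.} the value $v_0$ is the WW-index, which by the assumption of being \emph{basic} satisfies $v_0 > 1$. In particular, the vertex corresponding to $b_0 = v_0$ is not an index-$1$ vertex, so the hidden index is already chosen, which will give the required frame once the accordion property is established.

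Next I would use Definition \ref{WW sequence} to bound the number of $1$'s appearing among $b_1,\dots,b_s$ from above. By the definition of a WW-decomposition, there exist indices $1 \le \alpha < \beta \le s$ such that $b_\alpha = k_\alpha - 1$, $b_\beta = k_\beta - 1$, and $b_i = k_i$ for all $i \notin \{\alpha,\beta\}$. Since every $k_i \geq 2$, the entries $b_i$ with $i \notin \{\alpha,\beta\}$ satisfy $b_i \geq 2$, so the only entries that can possibly equal $1$ are $b_\alpha$ and $b_\beta$. This gives the upper bound: at most two entries of the vector of indices $(v_0,b_1,\cdots,b_s)$ are equal to $1$.

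For the matching lower bound, I invoke Lemma \ref{trivial}(2): any triangulated polygon has at least two index-$1$ vertices. Combining this with $v_0 > 1$ forces at least two of $b_1,\dots,b_s$ to equal $1$, and together with the previous paragraph these must be exactly $b_\alpha$ and $b_\beta$ (so in fact $k_\alpha = k_\beta = 2$). Consequently $\mathscr{P}$ has exactly two vertices of index $1$, and Lemma \ref{every triangulation with exactly two 1's is an accordion triangulation} identifies it as an accordion triangulation. The hidden index $v_0$ supplied by the WW-decomposition then turns it into a framed accordion triangulation in the sense of Definition \ref{definition of framed triangulation}, completing the argument.

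I do not foresee a serious obstacle: the argument is a short bookkeeping combining the structural description of WW-decompositions with the previous lemma. The one point requiring care is making sure that the ``$1$'s can only appear at the two WW-indices'' observation is extracted cleanly from Definition \ref{WW sequence}, and that the basic assumption $v_0 > 1$ is used in the right place to exclude the hidden index from contributing to the count.
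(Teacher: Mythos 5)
Your argument is correct and follows essentially the same route as the paper's own proof: both deduce that the hidden index exceeds $1$ from the basic hypothesis, combine Lemma \ref{trivial}(2) with the fact that a WW-decomposition only decrements two entries of a chain with all $k_i\geq 2$ to conclude there are exactly two index-$1$ vertices, and then invoke Lemma \ref{every triangulation with exactly two 1's is an accordion triangulation}. Your write-up merely makes explicit the upper-bound step (only $b_\alpha$ and $b_\beta$ can equal $1$) that the paper leaves implicit.
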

\begin{proof}
    By Definitions \ref{definition of basic wormhole triangulation} and \ref{definition of basic wormhole singularity}, the triangulation $\mathscr{P}$ is an extended zero fraction $[b_1,\cdots,b_s\mid b_0]$ where $b_0>1$. By Definition \ref{definition of basic wormhole triangulation}, it represents a WW-decomposition of a WW-sequence $\{b_{1}',\cdots,b_{s}'\}$ for some $b'_{i}>1$. By Definition \ref{WW sequence}, WW-decompositions of $\{b_1',\cdots,b_s'\}$ are obtained by subtracting $1$ from $\{b'_1,\cdots,b_s'\}$ in two different positions $1\leq \alpha<\beta \leq s$. By Theorem \ref{1-dimensional zero fractions and triangulations}, the numbers $(b_1,\cdots,b_s,b_0)$ correspond to the vector of indices of some triangulated $(s+1)$-gon. By Lemma \ref{trivial}(2), at least two $b_i$ must be equal to $1$. Since $b_{0}>1$, we obtain $b_{\alpha}=1$, $b_{\beta}=1$ and $b_i=b'_{i}$ for all other $i$. That is, $b'_{\alpha}=2$, $b'_{\beta}=2$, and $b'_{i}=b_i$ for all other $i$. The result follows from Lemma \ref{every triangulation with exactly two 1's is an accordion triangulation} and Definition \ref{definition of framed triangulation}.
\end{proof}

\remark \label{It is enough to classify basic womrhole triangulations} Let $\mathscr{P}=[b_1,\cdots,b_s \mid b_0]$ be a basic wormhole triangulation. It corresponds to a WW-decomposition with indices $1\leq \alpha<\beta\leq s$ of a WW-sequence $\{b_{1}',\cdots,b_{s}'\}$ for some $b'_{i}>1$. As in the previous proof, we obtain $b_{\alpha}=1$, $b_{\beta}=1$ and $b_i=b'_{i}$ for all other $i$. That is, $b'_{\alpha}=2$, $b'_{\beta}=2$, and $b'_{i}=b_i$ for all other $i$. Since $[b'_1,\cdots,b'_s]$ is the dual fraction of the associated  wormhole singularity, we can explicitly recover the Hirzebruch-Jung fraction associated with the  wormhole singularity via the Riemenschneider's dot diagram (Remark \ref{dualizing using dot diagram}). Therefore, by classifying basic wormhole triangulations, we are classifying basic wormhole singularities.

\remark \label{No wormhole triangulations of weight $1$} Accordion triangulations do not come with a frame. Since basic wormhole triangulations are framed accordion triangulations, it is sufficient to classify accordion triangulations that admit a frame that makes them a basic wormhole triangulation. Note that there are accordion triangulations that do not have such a frame.
For instance, triangulations in the trivial family (see Lemma \ref{trivial}(4)) are accordion triangulations that have no frame that makes them basic wormhole triangulations. Indeed, by Lemma \ref{trivial}(1), elements in the trivial family are the only accordion triangulations with exactly one vertex of index greater than $2$. Therefore, there are no basic wormhole triangulations with exactly one vertex of index greater than $2$.

\begin{definition} \label{Two framed triangulations are equal} Let $\mathscr{P}=[b_1,\cdots,b_s\mid b_0]$ and $\widetilde{\mathscr{P}}=[b_{1}',\cdots,b'_s\mid b'_{0}]$ be framed triangulated polygons. We say that $\mathscr{P}$ and $\widetilde{\mathscr{P}}$ are equal if $b_i=b_{i}'$ for all $0\leq i\leq n$. In such a case, we use the notation $\mathscr{P}=\widetilde{\mathscr{P}}$.
\end{definition}



Now we introduce the key definition of coherent graph of a framed triangulated polygon.

\begin{definition}\label{coherent graph}
    Let $\mathscr{P}=[b_1,\cdots,b_s\mid b_0]$ be a framed triangulated $(s+1)$-gon. Reading the vector $v=(b_1,\cdots,b_s,b_0)$ from left to right, define $x_i$ as the 
    $i$-th entry that is greater than $2$, and $j_i$ as the position such that $b_{j_i}=x_i$.
    Let $n$ be the number of $x_i's$ associated with the vector $v$. For each $1\leq i \leq n-1$, define $y_i$ as $j_{i+1}-j_{i}-1$. If $n\geq 2$, we define $y_n$ as $(s+1)-n-\sum_{i=1}^{n-1}y_i$. If $n=1$, we define $y_n$ as $(s+1)-n$. Construct the coherent graph $G_{\mathscr{P}}$ as follows: 
    \begin{itemize}
        \item{\textbf{Vertices}}: $G_{\mathscr{P}}$ has $n$ vertices, one for each $x_i$. The vertex associated with $x_i$ is denoted by $x_i$.
        \item{\textbf{Edges}}: For $1\leq i\leq n$, there is an edge between $x_i$ and $x_{i+1}$, labeled with $y_i$. Additionally, there is an edge between $x_n$ and $x_1$, labeled with $y_n$.
    \end{itemize}
    The vertices of $\mathscr{P}$ corresponding to the $x_{i}'s$ are called the weights of $\mathscr{P}$. We denote the coherent graph of $\mathscr{P}$ by $G_{\mathscr{P}}=([x_1,\cdots,x_{n-1}\mid x_n],(y_1,\cdots,y_n))$. If only the weights of $\mathscr{P}$ are relevant, we use the simplified notation $G_{\mathscr{P}}=[x_1,\cdots,x_{n-1}\mid x_n]$.
\end{definition}

\begin{figure}[h]   
    \includegraphics[width=9cm]{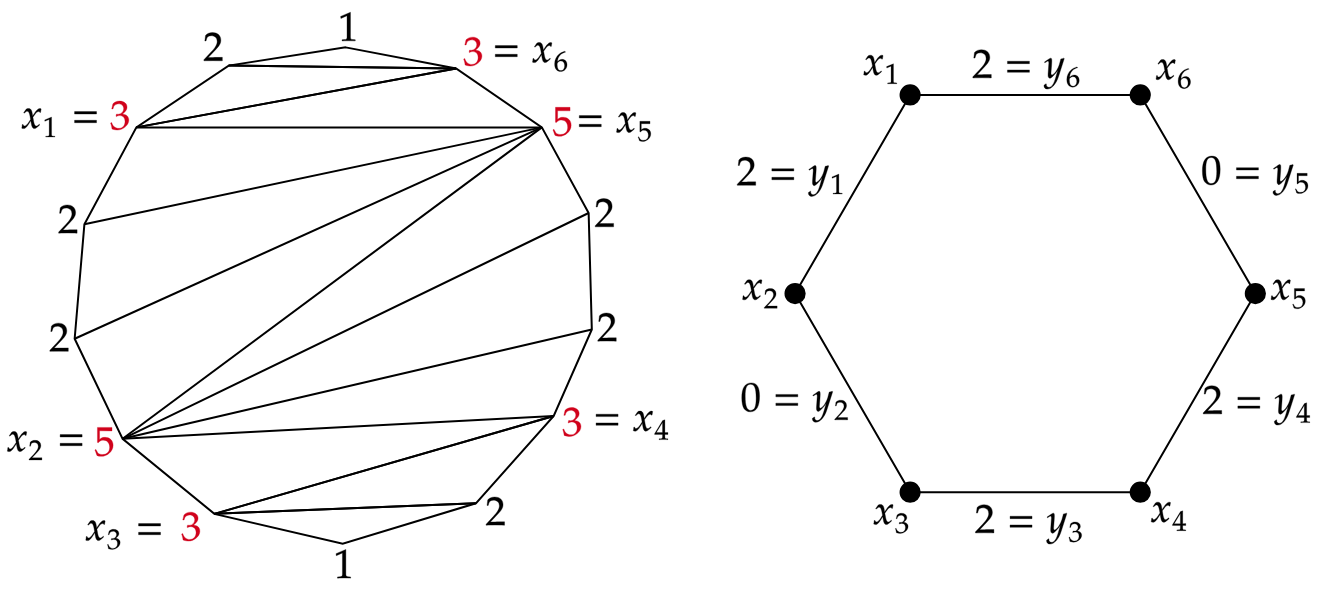}
    \caption{An example of a framed accordion triangulation $\mathscr{P}$ and its coherent graph $G_{\mathscr{P}}$.}
    \label{An example of an accordion triangulation and its coherent graph.}
\end{figure}

\remark Note that the frame accordion triangulations $\mathscr{P}=[1,2,3,2,2,5,3,1,2,3,2,2,5\mid 3]$, $\mathscr{P}_1=[2,3,2,2,5,3,1,2,3,2,2,5,3\mid 1]$, and $\mathscr{P}_2=[3,2,2,5,3,1,2,3,2,2,5,3,1\mid 2]$ have the same coherent graph, see Figure \ref{An example of an accordion triangulation and its coherent graph.}.\\

The coherent graph is defined for any framed triangulated $(s+1)$-gon with $s\geq 4$. However, the most powerful applications of these graphs are for framed accordion triangulations, due to the following results.

\begin{definition} \label{diagonals from x_i go to the pair (x_j,x_{j+1}).}
    Let $\mathscr{P}=[b_1,\cdots,b_s \mid b_0]$ be a framed accordion triangulation and let $G_{\mathscr{P}}=[x_1,\cdots,x_{n-1}\mid x_n]$ be its coherent graph. We say that diagonals from the weight $x_i$ go to the pair $(x_{j \text{ (mod n)}},x_{j+1 \text{( mod n)}})$ if all the diagonals in $\mathscr{P}$ containing the vertex $x_{i}$ are the diagonals connecting $x_i$ with $b_{k}$ for $k_1\leq k \leq k_2$ (strict inequality on the left when $j=i$, and strict inequality on the right when $j=i-1$), where $k_1$ and $k_2$ are the indices such that $b_{k_1}=x_{j}$ and $b_{k_2}=x_{j+1}$. We say that the weight $x_i$ is of type (I) if the diagonals from the weight $x_i$ go to the pair $(x_{j \text{ (mod n)}},x_{j+1 \text{( mod n)}})$ for $j=i$ or $j=i-1$; otherwise the weight $x_i$ is called of type (II).
\end{definition}

\begin{lemma}\label{diagonals fit into weight vertices}
    Let $\mathscr{P}=[b_1,\cdots,b_s\mid b_0]$ be a framed accordion triangulation, and let $G_\mathscr{P}=[x_1,\cdots,x_{n-1}\mid x_n]$ be its coherent graph. If $n\geq 2$, then for each $1\leq i\leq  n$ the weight $x_i$ is either a weight of type (I) or a weight of type (II).
\end{lemma}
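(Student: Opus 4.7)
The plan is to trace through the accordion construction (Definition \ref{Definition of accordion triangulation}) step by step and extract enough of its structure to pin down the diagonals at each weight. First, I would prove by induction on $J$ the following ``frontier'' property: at every step, the boundary of $\mathscr{P}^{(J)}$ consists of the current diagonal $\ell_{J-1}$ together with polygon edges of $\mathcal{P}$, and in particular no earlier diagonal $\ell_k$ with $k<J-1$ appears on that boundary. The inductive step holds because the removed triangle $\Delta_{J-1}$ has $\ell_{J-2}$ as one side (which leaves the boundary) and $\ell_{J-1}$ as another (which enters the boundary), while the third side is a polygon edge of $\mathcal{P}$.

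With this in hand, for each weight $x_i$ (which has $x_i-1\geq 2$ diagonals) I would observe that the rules of Definition \ref{Definition of accordion triangulation} force its diagonals to be drawn in consecutive steps $\ell_p,\ell_{p+1},\dots,\ell_q$: once $x_i$ appears on the frontier, switching to draw from the other frontier endpoint would immediately consume $x_i$, so $x_i$ must continue to draw until all its diagonals have been drawn. Writing $\ell_{p+r}=x_iQ_{p+r}$, the frontier property then shows that $Q_{p+r+1}$ is polygon-adjacent to $Q_{p+r}$ in $\mathcal{P}$, since the relevant side of $\mathscr{P}^{(p+r+1)}$ connecting them is not the frontier and hence is a polygon edge of $\mathcal{P}$. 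Iterating, the sequence $Q_p,\dots,Q_q$ is a polygon-consecutive block in $\mathcal{P}$ with no index-$1$ vertex lying strictly between any two of its members.

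To finish, I would analyze the extremes $Q_p$ and $Q_q$. The vertex $Q_p$ was on the frontier before $x_i$ appeared, drew some diagonals during those earlier steps, and is consumed at step $p+1$; a short case analysis shows that $Q_p$ is itself a weight whenever $p\geq 1$, while the only degenerate case is $p=0$, where $\ell_p=\ell_0=x_iQ_p$ and $Q_p$ has index $2$. A symmetric analysis applies to $Q_q$ using the step at which $x_i$ is itself consumed. Matching the extremes with consecutive weights of the coherent graph, the generic situation produces type (II) (when neither extreme is a polygon neighbor of $x_i$), while the degenerate situation produces type (I) (where $x_i$ itself plays the bookend role and is excluded by the strict-inequality conventions of Definition \ref{diagonals from x_i go to the pair (x_j,x_{j+1}).}). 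The main obstacle is the careful bookkeeping in this last step, particularly the boundary cases at the start and end of the accordion sequence.
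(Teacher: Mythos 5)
Your proposal is correct and follows essentially the same route as the paper's proof: both rest on the observation that the diagonals at a weight $x_i$ form a fan over a consecutive block of polygon vertices whose interior vertices have index $2$, so that $x_i$ can reach at most two other weights, which are then necessarily cyclically consecutive. Your version is considerably more detailed than the paper's short argument --- the frontier induction and the endpoint case analysis (showing $Q_p$ and $Q_q$ are weights except in the $p=0$ and last-step degenerations, which produce type (I)) make explicit the steps the paper leaves to the reader.
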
 
\begin{proof}
    By Definition \ref{Definition of accordion triangulation}, the diagonals from the weight $x_i$ connect the weight $x_i$ to $\text{index}(x_i)-1$ consecutive vertices of $\mathscr{P}$. For each triple $b_{j},b_{j+1},b_{j+2}$ of consecutive vertices  connected to $x_{i}$ in $\mathscr{P}$, we have that $b_{j+1}=2$, so $b_{j+1}$ cannot be another weight. By taking all possible triple of consecutive vertices connected to $x_i$, we conclude that $x_i$ can connect at most $2$ other weights. Since $n\geq 2$, the weight $x_i $ must connect at least another weight. If the weight $x_i$ connects exactly one weight, then $x_i$ is a weight of type (I), otherwise it is a weight of type (II).
\end{proof}

\begin{figure}[H] %
\begin{tabular}{ll}
\hspace*{-1,2em}
\includegraphics[width=8.1cm]{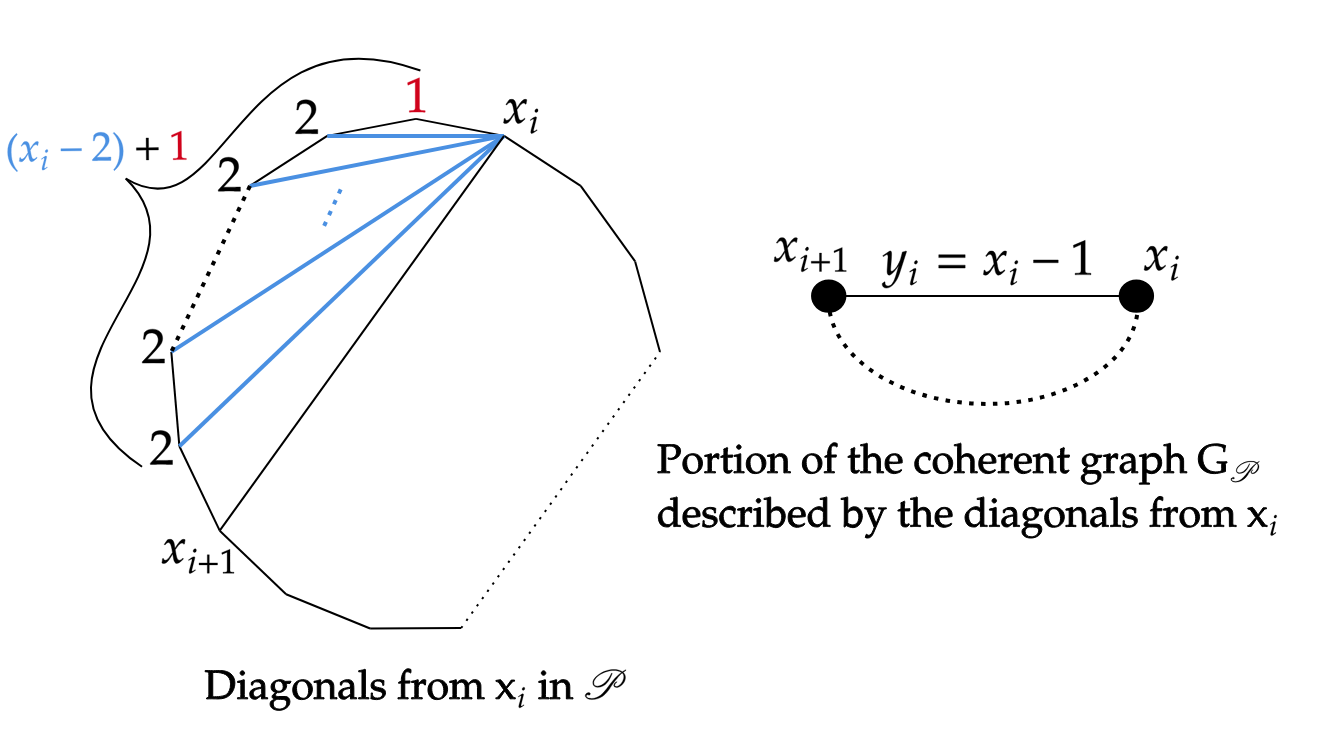}
&\hspace{-2em}
\includegraphics[width=8.1cm]{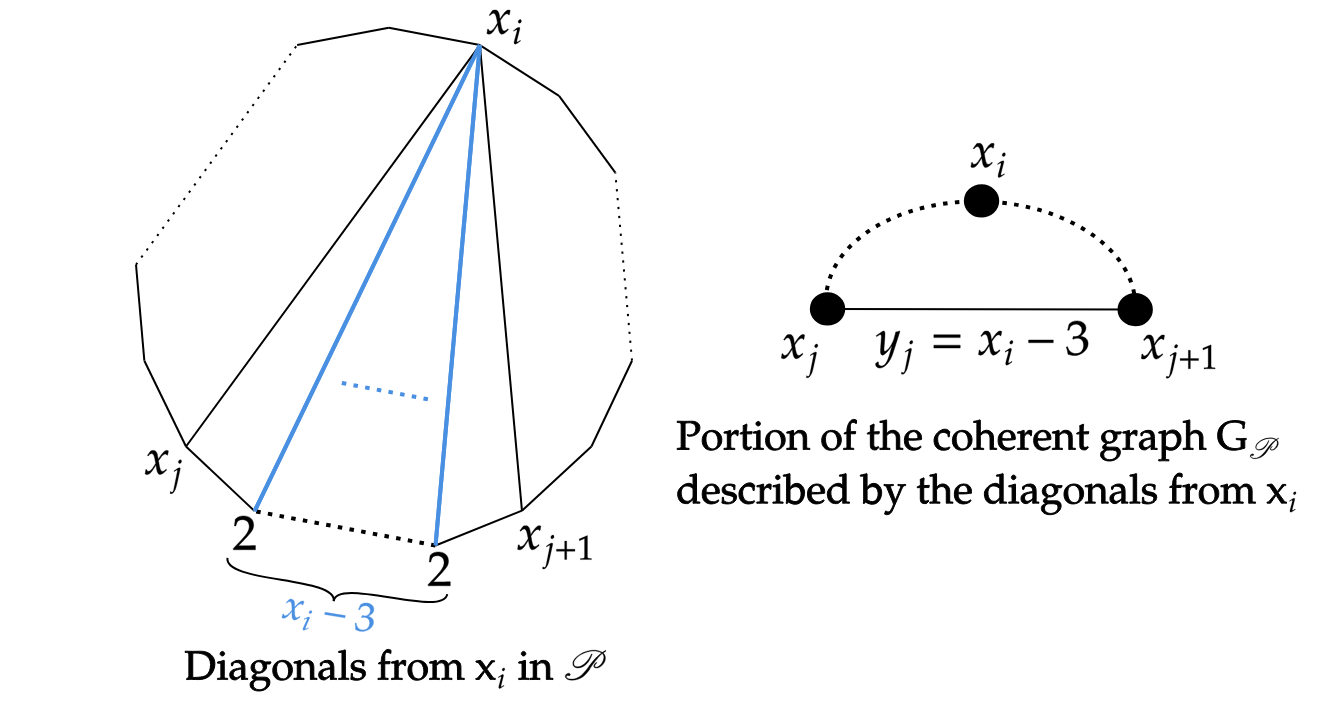}
\end{tabular}
\caption{Geometric interpretation of Lemma \ref{diagonals fit into weight vertices}.}
\label{Geometric interpretation of how we get the linear relations}
\end{figure}

\remark Weights of type (I) correspond (when $j=i$) to the situation on the left in Figure \ref{Geometric interpretation of how we get the linear relations}, while weights of type (II) correspond to the situation on the right in Figure \ref{Geometric interpretation of how we get the linear relations}.

\begin{lemma}(Alternative description of accordion triangulations) \label{Alternative description of accordion triangulations}\\
    Let $n\geq 2$ be an integer, let $x_1,\cdots,x_n$ be integers greater than $2$, and let $\mathscr{P}_{x_1,\cdots
    ,x_n}$ be the family of all accordion triangulations with weights $x_1,\cdots,x_n$. For each $1\leq i\leq n$ and $1\leq j\leq n$, there exists a unique accordion triangulation in $\mathscr{P}_{x_1,\cdots,x_n}$ whose diagonals from $x_i$ go to the pair $(x_{j\text{ (mod n)}}, x_{j+1\text{ (mod n)}})$.
\end{lemma}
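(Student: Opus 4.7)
The plan is to prove both existence and uniqueness simultaneously by induction on $n$, splitting by the type of pair direction at $x_i$ as classified in Lemma \ref{diagonals fit into weight vertices}.

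For the base case $n=2$, I would observe that the pair condition $(i,j)$ fixes one of $y_1,y_2$ to equal $x_i-1$, while the vertex-count identity $y_1+y_2 = x_1+x_2-2$ (obtained from Lemma \ref{trivial}(1) combined with the two-index-$1$ count from Lemma \ref{every triangulation with exactly two 1's is an accordion triangulation}) determines the other. The fan from $x_i$ triangulates the specified arc, and the requirement that $x_i$ have no further diagonals in the complementary sub-polygon forces its unique triangulation.

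For the inductive step with $n\geq 3$, I would treat the two types of pair direction separately. In the Type I case (WLOG $j=i$), the pair condition forces $y_i = x_i-1$, and the fan from $x_i$ of $y_i$ diagonals ends at $(x_i,x_{i+1})$, triangulating the arc between $x_i$ and $x_{i+1}$; the complementary sub-polygon carries $x_i$ as an index-$1$ vertex, $x_{i+1}$ with effective index $x_{i+1}-1$, and the remaining weights unchanged. Using Remark \ref{Every accordion triangulation of s+1 sides with s>3 has a weight adjacent to a 1.}, I would argue that $x_i$ is adjacent in the sub-polygon to exactly one sub-weight, which forces the triangle at $x_i$ and hence the pair direction of that adjacent sub-weight; applying the inductive hypothesis (or, when the sub-polygon has only one sub-weight, the trivial-family argument following Lemma \ref{trivial}(4)) then yields the unique sub-triangulation. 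In the Type II case ($j\neq i,i-1$), the fan from $x_i$ spans the entire arc $(x_j,x_{j+1})$ with $y_j+2$ diagonals, forces $y_j=x_i-3$, and divides the polygon into the fan region and two sub-polygons each having $x_i$ as an index-$1$ vertex and strictly fewer weights, so I would apply induction on each side.

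Existence would then follow by running the construction forward: from $(i,j)$, build the fan, derive the induced pair data for the neighboring sub-weight(s), apply the inductive hypothesis to the sub-polygons, and glue. I would verify that the resulting triangulation has exactly two index-$1$ vertices (one from the fan configuration and one from the sub-piece(s)), so it is an accordion triangulation by Lemma \ref{every triangulation with exactly two 1's is an accordion triangulation}. The hardest step will be showing that the pair direction of the sub-weight adjacent to $x_i$ is uniquely determined in each sub-polygon; this will require a case analysis based on which of $x_i$'s sub-neighbors is a sub-weight (depending on whether $x_{i\pm 1}\geq 4$ or whether the relevant $y$-value vanishes, in which case $x_i$'s sub-neighbor collapses directly onto the adjacent weight $x_{i\pm 1}$), together with the coherent-graph structure that pins down the arc of diagonals emanating from each weight between two cyclically consecutive weights.
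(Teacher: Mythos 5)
Your proposal is correct and follows essentially the same route as the paper: both arguments use the fact (Lemma \ref{diagonals fit into weight vertices} plus Definition \ref{Definition of accordion triangulation}) that the fan from $x_i$ splits the polygon into already-triangulated and yet-to-be-triangulated pieces, and that the accordion structure then forces the pair direction of each successive weight, propagating until the triangulation is determined. The only difference is presentational — you organize the propagation as a formal induction on the number of weights with explicit sub-polygon bookkeeping and an explicit count of index-$1$ vertices for the existence half, where the paper runs the same recursion directly and asserts finite termination.
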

\begin{proof}
    By Lemma \ref{diagonals fit into weight vertices}, the diagonals from any weight $x_i$ go to a certain pair $(x_{j \text{ (mod n)}}
    $, $x_{j+1 \text{ (mod n)}})$ allowing $j=i$ or $j=i-1$. 
    \begin{itemize}
        \item $\mathbf{If \hspace{0,2em} x_i \hspace{0,2em} is \hspace{0,2em} a \hspace{0,2em} weight \hspace{0,2em} of \hspace{0,2em} type \hspace{0,2em} (I)}$: the diagonals connecting $x_i$ divide the polygon into two parts: one part that is already triangulated and one that must be triangulated, as in Figure \ref{Geometric interpretation of how we get the linear relations}. If $j=i$, then we apply Lemma \ref{diagonals fit into weight vertices} and Definition \ref{Definition of accordion triangulation} to the weight $x_{j+1}$. Since we are drawing the diagonals of an accordion triangulation, the diagonals from $x_{i+1}$ must go to the pair $(x_{i-1 \text{ (mod n)}},x_{i \text{ (mod n)}})$. Inductively, we apply Lemma \ref{diagonals fit into weight vertices} and Definition \ref{Definition of accordion triangulation} to the last weight that were connected by the diagonals we have drawn. This process obviously finishes in finite steps, and it determines a unique accordion triangulation with weights $x_1,\cdots,x_n$. The case $j=i-1$ is similar. 

        \item $\mathbf{If \hspace{0,2em} x_i \hspace{0,2em} is \hspace{0,2em} a \hspace{0,2em} weight \hspace{0,2em} of \hspace{0,2em} type \hspace{0,2em} (II)}$: the diagonals connecting $x_i$ divide the polygon into three parts: one part that is already triangulated and two parts that must be triangulated, as in Figure \ref{Geometric interpretation of how we get the linear relations}.
        By Lemma \ref{diagonals fit into weight vertices} and Definition \ref{Definition of accordion triangulation} on the weights $x_{j}$ and $x_{j+1}$, we deduce that the diagonals from $x_{j}$ must go to the pair $(x_{i \text{ (mod n)}}, x_{i+1 \text{ (mod n)}})$ and the diagonals from $x_{j+1}$ must go to the pair $(x_{i-1 \text{ (mod n)}}, x_{i \text{ (mod n)}})$. Inductively, we apply Lemma \ref{diagonals fit into weight vertices} and Definition \ref{Definition of accordion triangulation} to the last weight that were connected by the diagonals we have drawn (if one these two sectors that must be triangulated is already triangulated, we end the process on that side), it determines a unique accordion triangulation with weights $x_1,\cdots,x_n$.
    \end{itemize}    
\end{proof}
\remark \label{pair of basic wormhole triangulations differ in 4 positions.} The previous Lemma and the proof of Proposition \ref{basic wormhole are accordion triangulations} imply that if $\mathscr{P}=[b_1,\cdots,b_s\mid b_0]$ is a basic wormhole triangulation and $\widetilde{\mathscr{P}}$ is a companion of $\mathscr{P}$, then $\widetilde{\mathscr{P}}=[b_1',\cdots,b_s'\mid b_0']$ satisfies that $b'_{0}>1$, and $b_{i}=b_{i}'$ for all $0\leq i\leq n$, except at four distinct indices $1\leq\alpha_1,\alpha_2,\beta_1,\beta_2\leq s$ where $b_{\alpha_1}=b_{\beta_1}=b'_{\alpha_2}=b'_{\beta_2}=1$ and $b'_{\alpha_1}=b'_{\beta_1}=b_{\alpha_2}=b_{\beta_2}=2$.\\

The following result is one of the key observations of this paper.
It tells us that if $\mathscr{P}$ is a framed accordion triangulation, then the coherent graph $G_{\mathscr{P}}$ is naturally equipped with an explicit system of linear relations describing the arrangement of the diagonals of the triangulation $\mathscr{P}$.

\begin{lemma}\label{linear relations in the coherent graph} Let $\mathscr{P}$ be a framed accordion triangulation and let $G_{\mathscr{P}}=([x_1,\cdots,x_{n-1}\mid x_n],(y_1,\cdots,y_n))$ be its coherent graph. 
If $n\geq 2$ and the diagonals from the weight $x_i$ go to the pair $(x_{j \text{ (mod n)}}$, \hspace{0,1em}$x_{j+1\text{ (mod n)}})$ for some pair of indices $1\leq i,j\leq n$, then there is a linear system of $n$ relations associated with the graph $G_\mathscr{P}$. Specifically,
    \begin{align}
        y_\ell=x_{\ell_{i,j}}-k_{\ell_{i,j}} \hspace{1em} \text{for} \hspace{1em}1\leq \ell \leq n,
    \end{align}
where each weight in $\mathscr{P}$ is exactly one of the numbers $x_{\ell_{i,j}}$ for some appropriate $\ell$ , and $k_{\ell_{i,j}}$ is equal to $3$ for all $\ell$, except for two special values of $\ell$ for which $k_{\ell_{i,j}}$ is equal to $1$.
\end{lemma}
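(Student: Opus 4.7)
The plan is to combine Lemma~\ref{Alternative description of accordion triangulations} with a direct count of the diagonals incident to each weight. By Lemma~\ref{Alternative description of accordion triangulations}, prescribing that the diagonals from $x_i$ go to the pair $(x_{j \text{ (mod n)}}, x_{j+1 \text{ (mod n)}})$ uniquely determines the accordion triangulation $\mathscr{P}$, and the recursive procedure in the proof of that lemma specifies, for every other weight $x_\ell$, whether $x_\ell$ is of type~(I) or of type~(II) (in the sense of Lemma~\ref{diagonals fit into weight vertices}) and, in the latter case, identifies the pair $(x_{a_\ell \text{ (mod n)}}, x_{(a_\ell+1) \text{ (mod n)}})$ its diagonals go to. I will therefore work in this unique $\mathscr{P}$ and read off a single linear equation at each weight.

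For the count, I apply Definition~\ref{diagonals from x_i go to the pair (x_j,x_{j+1}).} at each weight. If $x_\ell$ is of type~(I) with $j=\ell$, its $x_\ell-1$ diagonals hit exactly the $y_\ell$ vertices strictly between $x_\ell$ and $x_{\ell+1}$ that are not the one adjacent to $x_\ell$, together with $x_{\ell+1}$ itself; counting gives $y_\ell = x_\ell - 1$. The mirror case $j=\ell-1$ gives $y_{\ell-1} = x_\ell - 1$. If $x_\ell$ is of type~(II) with pair $(x_a, x_{a+1})$, its diagonals reach $x_a$, the $y_a$ intermediate vertices, and $x_{a+1}$, giving $y_a + 2 = x_\ell - 1$, i.e.\ $y_a = x_\ell - 3$. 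Thus each weight $x_\ell$ contributes exactly one equation of the form $y_{?} = x_\ell - k$, with $k=1$ when $x_\ell$ is of type~(I) and $k=3$ when $x_\ell$ is of type~(II).

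It remains to verify that the $n$ resulting equations form the claimed linear system: (a) the map $x_\ell \mapsto y_?$ is a bijection between weights and edges of $G_\mathscr{P}$, and (b) exactly two equations have $k=1$. For (a), the edge assigned to $x_\ell$ is either one of the two edges of $G_\mathscr{P}$ incident to $x_\ell$ (type~(I)) or the edge $y_a$ between the pair $(x_a, x_{a+1})$ across which $x_\ell$'s fan is drawn (type~(II)); since each edge of $G_\mathscr{P}$ bounds the arc covered by a unique fan, the map is injective on $n$ elements and hence bijective. For (b), by Lemma~\ref{diagonals fit into weight vertices} type~(I) weights are precisely those whose fan is bounded on one side by a polygon edge, which forces an index-$1$ vertex adjacent to them; since $\mathscr{P}$ has exactly two index-$1$ vertices by Lemma~\ref{every triangulation with exactly two 1's is an accordion triangulation}, and these two vertices must be adjacent to distinct weights (if both were adjacent to a single weight $w$, then the fan of $w$ would exhaust the entire polygon, forcing $n=1$ and contradicting $n\geq 2$), there are exactly two type~(I) weights. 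The main obstacle is the injectivity in (a): I would make it airtight by induction on $n$, using the recursion in the proof of Lemma~\ref{Alternative description of accordion triangulations} to peel off a type~(I) weight (together with its associated edge) at each inductive step, reducing to an accordion triangulation with one fewer weight.
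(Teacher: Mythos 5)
Your proposal is correct and follows essentially the same route as the paper: both invoke Lemma \ref{Alternative description of accordion triangulations} to pin down the arrangement of diagonals and then read off one relation per weight from the local count at that weight (which the paper delegates entirely to Lemma \ref{diagonals fit into weight vertices} and Figure \ref{Geometric interpretation of how we get the linear relations}). Your version simply makes explicit the counting ($y_\ell=x_\ell-1$ for type (I), $y_a=x_\ell-3$ for type (II)) and the bookkeeping (the weight-to-edge bijection and the fact that exactly two weights are of type (I)) that the paper's two-sentence proof leaves implicit.
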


\begin{proof}
    Since the diagonals from the weight $x_i$ go to the pair $(x_{j \text{ (mod n)}}$, \hspace{0,1em}$x_{j+1\text{ (mod n)}})$, by the proof of Lemma \ref{Alternative description of accordion triangulations}, the arrangement of the diagonals of $\mathscr{P}$ is determined by this data. The set of linear relations come from Lemma \ref{diagonals fit into weight vertices}, see Figure \ref{Geometric interpretation of how we get the linear relations}. 
\end{proof}

\remark By Lemma \ref{diagonals fit into weight vertices}, the diagonals from any weight $x_i$ go to a certain pair $(x_{j \text{ (mod n)}}
$, $x_{j+1 \text{ (mod n)}})$ allowing $j=i$ or $j=i-1$. By Lemma \ref{Alternative description of accordion triangulations}, the system of linear relations in Lemma \ref{linear relations in the coherent graph} does not depend on the choice of the weight $x_i$, it depends only on $\mathscr{P}$. If $n=1$, then the coherent graph of a framed accordion triangulation is also naturally equipped with an explicit system of relations. It satisfies the relation $y_n=x_1+1$.

\begin{example} Revisiting the example in Figure \ref{An example of an accordion triangulation and its coherent graph.}. The system of linear relations associated with $G_{\mathscr{P}}$ (Lemma \ref{linear relations in the coherent graph}) is:
\begin{align*}
    y_1=x_5-3, \hspace{0,5em} y_2=x_4-3, \hspace{0,5em}y_3=x_3-1, \hspace{0,5em} y_4=x_2-3, \hspace{0,5em} y_5=x_1-3, \hspace{0,5em} y_6=x_6-1.
\end{align*}
\vspace*{-2em}
\begin{figure}[H] %
    \includegraphics[width=9cm]{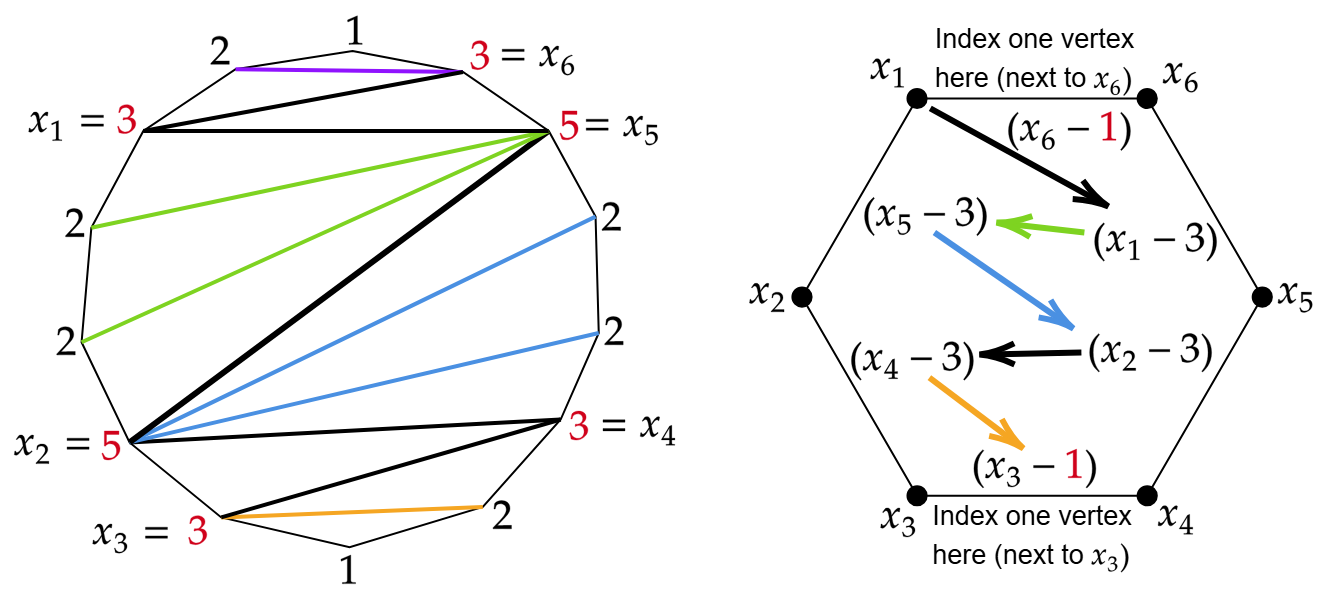}
    \caption{Revisiting the coherent graph in Figure \ref{An example of an accordion triangulation and its coherent graph.}.}
    \label{coherent graph revisited}
\end{figure}
\end{example}

\begin{definition}\label{Definition of equal coherent graphs}
    Let $\mathscr{P}$ and $\widetilde{\mathscr{P}}$ be framed triangulated polygons. We say that the coherent graphs $G_{\mathscr{P}}=([x_1,\cdots,x_{n-1}\mid x_n],(y_1,\cdots,y_n))$ and $G_{\widetilde{\mathscr{P}}}=([\widetilde{x_1},\cdots,\widetilde{x}_{m-1}\mid \widetilde{x}_m],(\widetilde{y}_1,\cdots,\widetilde{y}_m))$ are \textbf{equal} if and only if $n=m$, $x_i=\widetilde{x_i}$ and $y_i=\widetilde{y}_i$ for $1\leq i\leq n$.
\end{definition}

\begin{lemma}\label{coherent graph of basic wormholes are the same}
    Let $\mathscr{P}$ and $\widetilde{\mathscr{P}}$ be basic wormhole triangulations that are not equal. Assume that the hidden index in both framed triangulated polygons is a weight. The coherent graphs $G_{\mathscr{P}}$ and $G_{\widetilde{\mathscr{P}}}$ are equal if and only if $\widetilde{\mathscr{P}}$ is a companion of $\mathscr{P}$.
\end{lemma}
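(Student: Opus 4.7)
My plan is to prove both implications by analysing the two framed accordion triangulations position by position and observing that, modulo the placement of the two index-$1$ vertices, the coherent graph encodes all of their discrete data. For the ``if'' direction, suppose $\widetilde{\mathscr{P}}$ is a companion of $\mathscr{P}$. By Remark \ref{pair of basic wormhole triangulations differ in 4 positions.}, the extended zero chains of $\mathscr{P}$ and $\widetilde{\mathscr{P}}$ coincide except at four indices, at which the entries swap between $1$ and $2$. In particular, the positions of the weights and their values agree in the two triangulations. Since the gap parameters $y_i$ in Definition \ref{coherent graph} depend only on the positions of the weights inside the cyclic sequence $(b_1,\dots,b_s,b_0)$, they also agree, so $G_\mathscr{P}=G_{\widetilde{\mathscr{P}}}$.

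For the converse, assume $G_\mathscr{P}=G_{\widetilde{\mathscr{P}}}$. By Definition \ref{Definition of equal coherent graphs} the two framed triangulations share the same number $n$ of weights, the same cyclic sequence of weight values $x_1,\dots,x_n$, and the same gap sequence $y_1,\dots,y_n$. The hypothesis that each hidden index is a weight forces $x_n=b_0=\tilde b_0$, and Definition \ref{coherent graph} then pins down the common total length $s+1=n+\sum_{i} y_i$ together with the common positions $j_1,\dots,j_n$ of the weights inside the vectors $(b_1,\dots,b_s,b_0)$ and $(\tilde b_1,\dots,\tilde b_s,\tilde b_0)$. Consequently the two extended zero chains agree at every weight position and can differ only at non-weight positions, whose entries lie in $\{1,2\}$.

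By Proposition \ref{basic wormhole are accordion triangulations} and Lemma \ref{every triangulation with exactly two 1's is an accordion triangulation}, each of $\mathscr{P}$ and $\widetilde{\mathscr{P}}$ has exactly two vertices of index $1$, and its associated WW-sequence (Definition \ref{WW sequence}) is obtained from its extended zero chain by turning those two $1$'s into $2$'s. Both WW-sequences therefore equal the tuple $(c_1,\dots,c_s)$ with $c_i=b_i$ at weight positions and $c_i=2$ elsewhere, so they coincide. Since $\mathscr{P}\ne\widetilde{\mathscr{P}}$, the sets of $1$-positions must differ, producing two distinct WW-decompositions of the same WW-sequence and hence of the same basic wormhole singularity; by Definition \ref{definition of basic wormhole triangulation}, $\widetilde{\mathscr{P}}$ is a companion of $\mathscr{P}$. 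The only point requiring any care is the observation that the coherent graph determines every position outside the two ``movable'' index-$1$ slots, which is visible from Definition \ref{coherent graph} once the hidden index is known to be a weight; all other steps reduce to the bookkeeping above.
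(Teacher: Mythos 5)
Your proof is correct and follows essentially the same route as the paper: equality of coherent graphs (with the hidden index a weight) pins down the length and the positions and values of all weights, so the two extended zero chains can differ only by swapping $1$'s and $2$'s at non-weight positions, whence they are WW-decompositions of the same WW-sequence. The only cosmetic difference is that you conclude by exhibiting the common WW-sequence directly, while the paper first argues that the four swap positions are pairwise distinct; both close the argument in the same way.
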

\begin{proof} 
    By Definition \ref{definition of basic wormhole triangulation} and Remark \ref{pair of basic wormhole triangulations differ in 4 positions.}, the converse implication is straightforward. Assume that $G_{\mathscr{P}}$ and $G_{\widetilde{\mathscr{P}}}$ are equal.
    By Definition \ref{definition of basic wormhole triangulation}, $\mathscr{P}$ and $\widetilde{\mathscr{P}}$ are given by one of the extended zero chains associated with them, say $\mathscr{P}=[b_1,\cdots,b_{s_1}\mid b_0]$ and $\widetilde{\mathscr{P}}=[b_1',\cdots,b'_{s_2}\mid b'_0]$ with $b_0,b_0'\geq 3$.
    Denote the coherent graphs by $G_{\mathscr{P}}=([x_1,\cdots,x_{n-1}\mid x_n],(y_1,\cdots,y_n))$ and $G_{\widetilde{\mathscr{P}}}=([\widetilde{x_1},\cdots,\widetilde{x}_{m-1}\mid \widetilde{x}_m],(\widetilde{y}_1,\cdots,\widetilde{y}_m))$.
    By Definition \ref{Definition of equal coherent graphs}, we have that $n=m$, $x_i=\widetilde{x_i}$ and $y_i=\widetilde{y_i}$ for $1\leq i\leq n$. Since $b_0,b_0'\geq 3$, by Definition \ref{coherent graph},  $\mathscr{P}$ and $\widetilde{\mathscr{P}}$ have the same weights in the same positions and $s_1=s_2$. 
    By Lemma \ref{every triangulation with exactly two 1's is an accordion triangulation}, there are exactly two indices $i$ where $b_i=1$ and exactly two indices $j$ where $b_{j}'=1$. Assume that $b_i=1$ for $i\in \{\alpha_1,\beta_1\}$ and $b_{j}'=1$ for $j\in \{\alpha_2,\beta_2\}$.
    Since we know all the weights, by Definition \ref{Definition of accordion triangulation}, the position of $b_{\alpha_1}=1$ determines the position of $b_{\beta_1}=1$, while the position of $b'_{\alpha_2}=1$ determines the position of $b'_{\beta_2}=1$. Furthermore, the numbers $\alpha_1,\alpha_2,\beta_1,\beta_2$ are $4$ different numbers because $\mathscr{P}$ and $\widetilde{\mathscr{P}}$ are not equal (Remark \ref{pair of basic wormhole triangulations differ in 4 positions.}). 
    Then $b_0>1$ and $b_i=b'_i$ for all $0\leq i,i'\leq s$, except in $4$ positions where we are swapping the pairs of $1$'s in each extended zero fraction with a pair of $2's$ in the other one. 
    Therefore, $\mathscr{P}$ and $\widetilde{\mathscr{P}}$ correspond to WW-decompositions of the same fraction. By Definition \ref{definition of basic wormhole triangulation} and Remark \ref{pair of basic wormhole triangulations differ in 4 positions.}, we conclude.
\end{proof}

\begin{example} Consider the framed triangulated polygons $\mathscr{P}=[1,2,3,2,2,5,3,1,2,3,2,2,5\mid 3]$ and $\widetilde{\mathscr{P}}=[2,2,3,1,2,5,3,2,2,3,1,2,5\mid 3]$, see Figure \ref{rotation of diagonals, a companion triangulation}. Note that $\mathscr{P}$ and $\widetilde{\mathscr{P}}$ correspond to WW-decompositions of the WW-sequence $\{2,2,3,2,2,5,3,2,2,3,2,2\}$ with WW-index $x_6=3$.
    \begin{figure}[H] %
        \begin{tabular}{ll}
        \includegraphics[width=9cm]{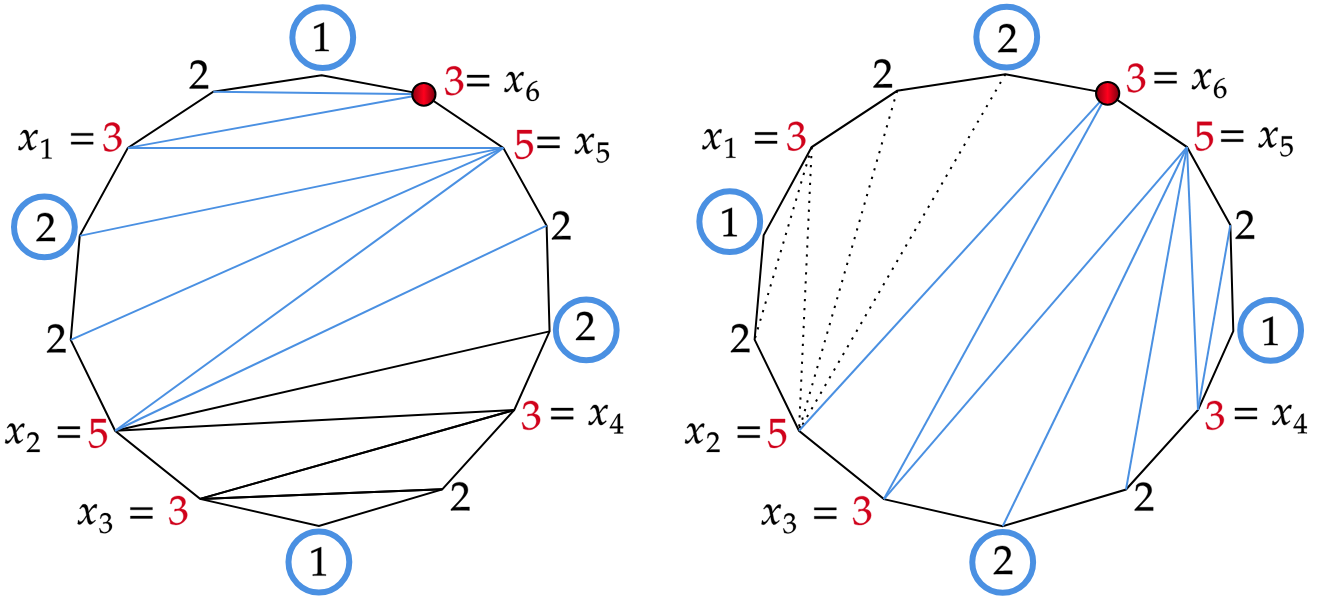}
        \end{tabular}
        \caption{Framed triangulated polygon and a companion.}
        \label{rotation of diagonals, a companion triangulation}
    \end{figure}
The triangulations $\widetilde{\mathscr{P}}$ and $\widetilde{\mathscr{P}}$ have the same indices in the same positions except at $4$ positions where we are swapping a pair of index $1$ vertices with a pair of index $2$ vertices (see Remark \ref{pair of basic wormhole triangulations differ in 4 positions.}).
We can think of the triangulation $\widetilde{\mathscr{P}}$ as a triangulation obtained from $\mathscr{P}$ through a process of rotating the diagonals around the pivot vertex $x_6$. During this process, certain diagonals will no longer contribute to the triangulation by connecting vertices. However, these diagonals will reappear in a "coherent" manner in the remaining part of the polygon that stills needs to be triangulated (see Lemma \ref{Alternative description of accordion triangulations}). This idea motivates the term coherent graph, and it will be formalized by the definition of coherent rotation of diagonals (Definition \ref{Standard family of accordion triangulation of weight n and coherent rotation of diagonals for elements of the standard family of weight n}).
\end{example}

Recall that accordion triangulations do not come with a frame. However, we can take a frame that is almost canonical.
\begin{definition} \label{standard framed}
    Let $\mathscr{P}$ be an accordion triangulation. A standard frame for $\mathscr{P}$ is a frame of $\mathscr{P}$ where the hidden vertex is a weight of type (I) and the first entry of the extended zero chain is $1$.\\
\end{definition}
\vspace*{-2em}
\begin{definition}
    \label{Standard family of accordion triangulation of weight n and coherent rotation of diagonals for elements of the standard family of weight n}
    Let $n\geq 2$ be an integer, let $x_{1}, \cdots,x_{n}$ be $n$ integer variables with the restriction that $x_{i}\geq 3$. Let $\mathscr{P}^{m}_{x_1,\cdots,x_n}$ be the unique accordion triangulation with weights $x_1,\cdots,x_n$ (in that order) such that the diagonals from $x_{n}$ go to the pair $(x_{m \text{ (mod n)}},x_{m+1 \text{ (mod n)}})$ for $0\leq m\leq n-1$. 
    The frame of $\mathscr{P}^{m}_{x_1,\cdots,x_n}$ is the one given by the ordering $x_1,\cdots,x_n$ and hidden index $x_n$. 
    We denote by $\mathscr{P}_m$ the family of all triangulations $\mathscr{P}^{m}_{x_1,\cdots,x_n}$. 
    The family $\mathscr{P}_0$ is called the standard family of accordion triangulations, whereas the family $\mathscr{P}_m$ for $1\leq m \leq n-1$ is called the coherent family obtained after $m$-coherent rotations of diagonals from the standard family $\mathscr{P}_0$. We say that $\mathscr{P}^{m}_{x_1,\cdots,x_n}$ in the family $\mathscr{P}_m$ is obtained from $\mathscr{P}^{0}_{x_1,\cdots,x_n}$ via $m$-coherent rotation of diagonals. 
    For any element $\mathscr{P}^{m}_{x_1,\cdots,x_n}$ in the family $\mathscr{P}_m$, we denote by $S_m$ the system of linear relations associated with the coherent graph $G_{\mathscr{P}^{m}_{x_1,\cdots,x_n}}$ where $0\leq m\leq n-1$. 
\end{definition}

Given $n\geq 2$ an integer and an integer $0\leq m\leq n-1$, it is straightforward to explicitly calculate the linear system of $n$ relations associated with $G_{\mathscr{P}^{m}_{x_1,\cdots,x_n}}$ in Lemma $\ref{linear relations in the coherent graph}$.
Specifically
\begin{align}\label{System of equations Sm}
    S_m\colon y_i=x_{(n-i)+m \text{(mod n)}}-k_{i}^{(m)} \hspace{1em} \text{for} \hspace{1em}1\leq i\leq n,
\end{align}
where the numbers $k_{i}^{(m)}$ are given as follows:
\begin{itemize}
    \item $\mathbf{If \hspace{0,2em} n \hspace{0,2em} is \hspace{0,2em} odd}$: $k^{(m)}_{\frac{n-1}{2}+\lceil{\frac{m}{2}}\rceil \text{ (mod n)}}=1$, $k^{(m)}_{n+\lfloor{\frac{m}{2}}\rfloor \text{ (mod n)}}=1$, otherwise $k_{i}^{(m)}=3$,
    \item $\mathbf{If \hspace{0,2em}n \hspace{0,2em} is \hspace{0,2em} even}$:  $k_{\frac{n}{2}+\lfloor \frac{m}{2}\rfloor \text{ (mod n)}}^{(m)}=1, k_{n+\lfloor \frac{m}{2}\rfloor \text{ (mod n)}}^{(m)}=1$, otherwise $k_i^{(m)}=3$,
\end{itemize}
where $\lceil \cdot\rceil$ and $\lfloor \cdot \rfloor$ represent the ceiling and floor functions respectively.

\begin{lemma}\label{criterion for solution for the system S_0 and S_m}
    Let $P\colon \mathbb{Z}^{n}\to \mathbb{Z}^n$ be the map given by $P(x_1,\cdots,x_n)= (x_{n},x_1,\cdots,x_{n-1})$. Then the system of equations
\begin{align}\label{equation (I-P^m)x=v}
    (I-P^m)\vec{x}=\vec{v}.
\end{align} 
has a solution if and only if $\sum_{i\in C_j} v_i=0$ for each cycle $C_j$ in the cycle decomposition induced by $P^m$.
\end{lemma}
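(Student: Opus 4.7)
The plan is to proceed by direct linear algebra, computing $(I-P^m)$ coordinate-wise and using a telescoping argument over each cycle of $P^m$.

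First I would compute the $i$-th coordinate of $(I-P^m)\vec{x}$ explicitly. Since $P$ acts by $(P\vec{x})_i = x_{i-1 \text{ (mod n)}}$, iterating gives $(P^m\vec{x})_i = x_{i-m \text{ (mod n)}}$, so equation (\ref{equation (I-P^m)x=v}) becomes the system
\begin{equation*}
    x_i - x_{i-m \text{ (mod n)}} = v_i, \qquad 1 \leq i \leq n.
\end{equation*}
The permutation $P^m$ on indices sends $i$ to $i-m \text{ (mod n)}$; its cycles are exactly the orbits of the subgroup of $\mathbb{Z}/n\mathbb{Z}$ generated by $-m$, so there are $d := \gcd(n,m)$ cycles, each of length $n/d$. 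I would fix such a cycle $C_j = \{i_0, i_0-m, i_0-2m, \ldots, i_0-(k-1)m\}$ with $k = n/d$, all indices read mod $n$.

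For the necessity direction, I would sum the equations over the indices of $C_j$:
\begin{equation*}
    \sum_{\ell=0}^{k-1} \bigl(x_{i_0 - \ell m} - x_{i_0 - (\ell+1)m}\bigr) = \sum_{i \in C_j} v_i.
\end{equation*}
The left-hand side telescopes, and since $km \equiv 0 \pmod{n}$ we return to the starting index, so the left-hand side is $0$. This forces $\sum_{i \in C_j} v_i = 0$, which is the stated condition for every cycle.

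For sufficiency, assume the condition holds. I would construct a solution cycle by cycle: on each cycle $C_j$, choose $x_{i_0}$ arbitrarily, then define successively $x_{i_0-(\ell+1)m} := x_{i_0-\ell m} - v_{i_0-\ell m}$ for $\ell = 0, 1, \ldots, k-2$. This satisfies all equations indexed by $i_0, i_0-m, \ldots, i_0-(k-2)m$ by construction. The only remaining equation on the cycle is the one indexed by $i_0-(k-1)m$, which reads $x_{i_0-(k-1)m} - x_{i_0} = v_{i_0-(k-1)m}$; summing the constructed recursions shows $x_{i_0} - x_{i_0-(k-1)m} = \sum_{\ell=0}^{k-2} v_{i_0-\ell m}$, so consistency with the last equation is equivalent to $\sum_{\ell=0}^{k-1} v_{i_0-\ell m} = 0$, which is precisely the assumption on $C_j$. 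Since cycles are disjoint and no equation mixes variables from different cycles, concatenating these solutions yields a global solution $\vec{x}$.

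The only real subtlety is keeping the indexing consistent between the permutation $P^m$ acting on vectors and the induced permutation on index labels; everything else is a telescoping check, so I do not expect any substantial obstacle.
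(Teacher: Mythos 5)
Your proof is correct and follows essentially the same route as the paper: telescoping the equations over each cycle of $P^m$ for necessity, and the cycle-by-cycle construction (which the paper only sketches in the remark following the lemma, calling the converse ``straightforward'') for sufficiency. The only cosmetic difference is that you read $(P^m\vec{x})_i = x_{i-m}$ while the paper writes the system as $x_i - x_{i+m} = v_i$; since the cycles and the sum condition are identical under either indexing convention, this does not affect anything.
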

\begin{proof}
    The map $P^m$ acts permuting the entries of vectors $(x_1,\cdots,x_n)$, so it breaks the set $\{1,\cdots,n\}$ into disjoint cycles, each of length $\ell=\frac{n}{d}$ where $d=\gcd(m,n)$. If there exists a solution of the equation, then for each component $x_i$ of $\vec{x}$, we have $P^m x_i=x_{i+m}$ where the indices are considered $\text{mod n}$. The equation (\ref{equation (I-P^m)x=v}) implies that
    \begin{align*}
        x_i-x_{i+m \text{ (mod n)}} & =v_i,\\
        x_{i+m \text{ (mod n)}}-x_{i+2m \text{ (mod n)}}& =v_{i+m \text{ (mod n)}},\\
        \vdots\\
        x_{i+(\ell-1)m \text{ (mod n)}}-x_{i}  & =v_{i+(l-1)m \text{ (mod n)}}.
    \end{align*}
    Then $$0=\sum_{j=0}^{\ell-1} v_{i+jm \text{ (mod n)}}.$$
    The converse implication is straightforward.
\end{proof}

\remark \label{one parameter for each cycle in the decomposition} When the system (\ref{equation (I-P^m)x=v}) is consistent, the proof of the previous Lemma gives an explicit parametric solution of this system (1-parameter for each cycle).  For example, for indices in the orbit containing the number $i$ we can write all the terms in the corresponding orbit of $x_i$ as follows:
\begin{align*}
    x_{i+m \text{ (mod n)}} & =x_{i}-v_i\\
    x_{i+2m \text{ (mod n)}} & =x_i-v_i-v_{i+m \text{ (mod n)}}\\
    & \vdots\\
    x_{i+(\ell-1)m} & = x_{i}-v_i-v_{i+m \text{ (mod n)}}-\ldots-v_{i+(\ell-2)m \text{ (mod n)}.}
\end{align*}
Since the length of each cycle is $\ell =\frac{n}{\gcd(m,n)}$, we deduce that the number of parameters in the parametric solution of the equation (\ref{equation (I-P^m)x=v}) is exactly $\gcd(m,n)$.

In particular, for the systems of equations $S_0$ and $S_{m}$ in (\ref{System of equations Sm}),
\begin{align*}
    S_0:y_i & =x_{n-i \text{ (mod n)}}-k^{(0)}_{i} \hspace{1em} \text{for} \hspace{1em} 1\leq i\leq n, \\
    S_m:y_i & =x_{(n-i)+m \text{ (mod n)}}-k_i^{(m)} \hspace{1em} \text{for} \hspace{1em} 1\leq i \leq n,
\end{align*}
we obtain the system of relations:
$$x_{n-i \text{ (mod n)}}-x_{(n-i)+m \text{ (mod n)}}=k^{(0)}_i-k_{i}^{(m)} \hspace{1em} \text{for} \hspace{1em} 1\leq i\leq n.$$
Changing indices, we get:
$$x_{i}-x_{i+m \text{ (mod n)}}=\underbrace{k^{(0)}_{n-i\text{ (mod n)}}-k^{(m)}_{n-i \text{ (mod n)}}}_{v_i} \hspace{1em} \text{for} \hspace{1em} 1\leq i \leq n.$$
Using the same argument as before, we obtain a parametric description of the orbit of $x_i$.

\begin{theorem}\label{Main theorem}
Let $n\geq 2$ be a integer, and let $\mathscr{P}$ be a framed accordion triangulation with a standard frame and weights $x_1,\cdots,x_n$. Then $\mathscr{P}$ is a basic wormhole triangulation with a standard frame if there exists an integer $1\leq m\leq n-1$ such that the system $S_0\cup S_m$ of $2n$ linear equations
\begin{align}
    S_0:y_i & =x_{n-i \text{(mod n)}}-k_{i}^{(0)}  \hspace{4em} \text{for } 1\leq i\leq n,\\
    S_m:y_i & =x_{(n-i)+m \text{ (mod n)}}-k_i^{(m)} \hspace{1.8em} \text{for } 1\leq i\leq n,
\end{align}
is consistent; where $S_0$ is the system of linear relations associated with the coherent graph $G_{\mathscr{P}^{0}_{x_1,\cdots,x_n}}$, and $S_{m}$ is the system of linear relations associated with the coherent graph $G_{\mathscr{P}^{m}_{x_{1}\cdots,x_{n}}}$. 
The system of equations $S_0\cup S_m$ is inconsistent if and only if $\gcd(n,m) \nmid n-\lfloor\frac{m}{2}\rfloor$ and $\gcd(n,m)\nmid \frac{n}{2}-\lfloor\frac{m}{2}\rfloor$ when $n$ is even; $\gcd(n,m)\nmid n-\lceil\frac{m}{2}\rceil$ and $\gcd(n,m)\nmid \frac{n-1}{2}- \lfloor\frac{m}{2}\rfloor$ when $n$ is odd. When the system of equations $S_0\cup S_m$ is consistent it has $\gcd(n,m)$-parameters, $x_1,\cdots,x_{\gcd(n,m)}$, and its parametric solutions is given by
$$x_{i+jm \text{ (mod n)}}=x_i-(k^{(0)}_{n-i \text{ (mod n)}}-k^{(m)}_{n-i \text{ (mod n)}})- \ldots- (k^{(0)}_{n-(i+(j-1)m) \text{ (mod n)}}-k^{(m)}_{n-(i+(j-1)m) \text{ (mod n)}})$$
for $1\leq i\leq \gcd(n,m)$ and $1\leq j\leq \frac{n}{\gcd(n,m)}$. The numbers $k^{(0)}_{i}$ and $k_{i}^{(m)}$ are given as follows: 
\begin{itemize}
    \item $\mathbf{If \hspace{0,2em} n \hspace{0,2em} is \hspace{0,2em} odd}$: $k^{(0)}_\frac{n-1}{2}=1$, $k^{(0)}_{n}=1$, $k^{(m)}_{\frac{n-1}{2}+\lceil{\frac{m}{2}}\rceil \text{ (mod n)}}=1$, $k^{(m)}_{n+\lfloor{\frac{m}{2}}\rfloor \text{ (mod n)}}=1$, otherwise $k_{i}^{(0)}=3$ and $k_{i}^{(m)}=3$.
    
    \item $\mathbf{If \hspace{0,2em}n \hspace{0,2em} is \hspace{0,2em} even}$: $k^{(0)}_{\frac{n}{2}}=1$, $k^{(0)}_{n}=1$, $k_{\frac{n}{2}+\lfloor \frac{m}{2}\rfloor \text{ (mod n)}}^{(m)}=1, k_{n+\lfloor \frac{m}{2}\rfloor \text{ (mod n)}}^{(m)}=1$, otherwise $k_i^{(0)}=3$ and $k_i^{(m)}=3$.
\end{itemize}
The parametric solution of $S_0\cup S_m$ gives a pair ($\mathscr{P}^{0}_{x_1,\cdots,x_n} \hspace{0,3em} , \hspace{0,3em} \mathscr{P}^{m}_{x_1,\cdots,x_n}$) of parametric basic wormhole triangulations with $n$ weights where $\mathscr{P}=\mathscr{P}^{0}_{x_1,\cdots,x_n}$ and $\mathscr{P}^{m}_{x_1,\cdots,x_n}$ is a companion of $\mathscr{P}$.
\end{theorem}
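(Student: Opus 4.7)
The plan is to apply Lemma \ref{coherent graph of basic wormholes are the same} to recast the basic-wormhole condition as an equality of coherent graphs, then extract the system $S_0\cup S_m$ via Lemma \ref{linear relations in the coherent graph}, and finally apply Lemma \ref{criterion for solution for the system S_0 and S_m} to the resulting system $(I-P^m)\vec{x}=\vec{v}$ to analyze its consistency.

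First I would establish the equivalence between \emph{$\mathscr{P}$ with a standard frame being a basic wormhole triangulation} and \emph{the existence of some $m$ for which $S_0\cup S_m$ is consistent}. Given the basic wormhole $\mathscr{P}$, pick a companion $\widetilde{\mathscr{P}}$; by Proposition \ref{basic wormhole are accordion triangulations} it is also a framed accordion triangulation, and by Remark \ref{pair of basic wormhole triangulations differ in 4 positions.} it has the same weights in the same positions and the same hidden index as $\mathscr{P}$, so its induced frame is also standard. By the uniqueness in Lemma \ref{Alternative description of accordion triangulations}, one can write $\mathscr{P}=\mathscr{P}^{0}_{x_1,\ldots,x_n}$ and $\widetilde{\mathscr{P}}=\mathscr{P}^{m}_{x_1,\ldots,x_n}$ for a unique $1\leq m\leq n-1$. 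Lemma \ref{coherent graph of basic wormholes are the same} then gives $G_{\mathscr{P}}=G_{\widetilde{\mathscr{P}}}$, which by Lemma \ref{linear relations in the coherent graph} and the explicit form (\ref{System of equations Sm}) is exactly the statement that the tuple $(x_1,\ldots,x_n,y_1,\ldots,y_n)$ satisfies both $S_0$ and $S_m$. Conversely, any common solution with $x_i\geq 3$ produces, via Lemma \ref{Alternative description of accordion triangulations}, both $\mathscr{P}^{0}_{x_1,\ldots,x_n}$ and $\mathscr{P}^{m}_{x_1,\ldots,x_n}$ as framed accordion triangulations with identical coherent graphs, and the converse part of Lemma \ref{coherent graph of basic wormholes are the same} closes this direction.

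Next I would analyze the consistency of $S_0\cup S_m$. Eliminating the $y_i$'s gives the linear system
\[
(I-P^m)\vec{x}=\vec{v},\qquad v_i=k^{(0)}_{n-i\,(\mathrm{mod}\,n)}-k^{(m)}_{n-i\,(\mathrm{mod}\,n)},
\]
and Lemma \ref{criterion for solution for the system S_0 and S_m} reduces consistency to the vanishing of $\sum_{i\in C_j}v_i$ over each $P^m$-orbit $C_j$, an arithmetic progression $\{i,i+m,i+2m,\ldots\}\pmod{n}$ of length $n/\gcd(n,m)$. From the explicit description of $k^{(0)}_{\cdot}$ and $k^{(m)}_{\cdot}$ in the theorem, $\vec{v}$ has exactly two entries equal to $-2$ and two equal to $+2$ (all others $0$), sitting at four explicit positions modulo $n$. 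There are then two ways to pair each $+2$ with a $-2$ so that both pairs lie in the same orbit, producing displacements $\lfloor m/2\rfloor$ and $n/2-\lfloor m/2\rfloor$ in the even case, and $\lceil m/2\rceil$ and $(n-1)/2-\lfloor m/2\rfloor$ in the odd case. The system is consistent iff $\gcd(n,m)$ divides at least one of these two displacements, which, using $\gcd(n,m)\mid n$ to rewrite $\lfloor m/2\rfloor$ as $n-\lfloor m/2\rfloor$ (and similarly in the odd case), is the negation of the inconsistency criterion stated in the theorem.

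Finally, when $S_0\cup S_m$ is consistent, Remark \ref{one parameter for each cycle in the decomposition} exhibits $\gcd(n,m)$ free parameters, one per orbit of $P^m$, and the closed-form solution in the statement follows by telescoping the relation $x_{i+jm}-x_{i+(j-1)m}=-(k^{(0)}_{n-(i+(j-1)m)}-k^{(m)}_{n-(i+(j-1)m)})$ along each orbit. Feeding the resulting parametric weights into Lemma \ref{Alternative description of accordion triangulations} produces the parametric pair $(\mathscr{P}^{0}_{x_1,\ldots,x_n},\mathscr{P}^{m}_{x_1,\ldots,x_n})$, whose equal coherent graphs combined with Lemma \ref{coherent graph of basic wormholes are the same} yield the basic-wormhole/companion structure.

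The main obstacle I expect is the careful bookkeeping in the third paragraph: pinning down, for both parities of $n$ and every $1\leq m\leq n-1$, the exact positions modulo $n$ of the four nonzero entries of $\vec{v}$, which of the two intra-orbit pairings is available, and matching the resulting divisibility conditions with the stated criterion involving the floor and ceiling functions. The calculation is elementary but must be done uniformly to avoid indexing errors near the boundary positions $n/2$ and $n$, especially when $m$ is small or when $\gcd(n,m)$ is large.
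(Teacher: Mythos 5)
Your proposal is correct and follows essentially the same route as the paper: reduce the basic-wormhole condition to equality of coherent graphs via Lemma \ref{coherent graph of basic wormholes are the same}, identify the companion as $\mathscr{P}^{m}_{x_1,\ldots,x_n}$ using Lemma \ref{Alternative description of accordion triangulations}, eliminate the $y_i$ to obtain $(I-P^m)\vec{x}=k^{(0)}-k^{(m)}$, and apply the orbit-sum criterion of Lemma \ref{criterion for solution for the system S_0 and S_m} together with the telescoping parametric solution. The one point to watch is that your claim that $\vec{v}$ has exactly two entries equal to $+2$ and two equal to $-2$ fails in the degenerate cases (e.g.\ $m=1$, and $m=n-1$ with $n$ odd) where positions collide; the paper treats these separately, and they do not affect the conclusion, but your uniform bookkeeping in the third paragraph would need to accommodate them.
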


\begin{proof}
    By Definition \ref{standard framed}, we have that $\mathscr{P}= \mathscr{P}^{0}_{x_1,\cdots,x_n}$, so $\mathscr{P}$ is an element of the standard family $\mathscr{P}_0$. Let $\mathscr{P}^{\vee}$ be any companion of $\mathscr{P}$. Since $\mathscr{P}$ is a basic wormhole triangulation with a standard frame, by Lemma \ref{coherent graph of basic wormholes are the same}, the coherent graphs $G_\mathscr{P}$ and $G_{\mathscr{P}^{\vee}}$ are equal.
    Therefore, we have that $G_{\mathscr{P}}=([x_1,\cdots,x_{n-1}\mid x_n],(y_1,\cdots,y_n))$ and $G_{\mathscr{P}^{\vee}}=([x_1,\cdots,x_{n-1}\mid x_n],(y_1,\cdots,y_n))$.
    Since $\mathscr{P}^{\vee}$ is also a basic wormhole triangulation, by Lemma \ref{basic wormhole are accordion triangulations}, we have that $\mathscr{P}^{\vee}$ is a framed accordion triangulation. 
    By Lemma \ref{Alternative description of accordion triangulations}, the triangulation $\mathscr{P}^{\vee}$ is completely determined by knowing $0\leq m \leq n$ such that the diagonals from $x_n$ go to the pair $(x_{m\text{ (mod n)}},x_{m+1 \text{(mod n)}})$. Note that the cases $m=0$ and $m=n$ correspond to the triangulation $\mathscr{P}$, so we discard these cases because $\mathscr{P}$ and $\mathscr{P}^{\vee}$ are not equal.
    By Definition \ref{Standard family of accordion triangulation of weight n and coherent rotation of diagonals for elements of the standard family of weight n}, each choice of $m$ implies that $\mathscr{P}^{\vee}=\mathscr{P}^{m}_{x_1,\cdots,x_n}$ for some $1\leq m\leq n-1$. 
    By Lemma \ref{linear relations in the coherent graph}, the coherent graphs $G_{\mathscr{P}}$ and $G_{\mathscr{P}^{\vee}}$ have associated a linear system of relations describing the arrangement of the diagonals in $\mathscr{P}$ and $\mathscr{P}^{\vee}$, respectively.
    By Lemma \ref{coherent graph of basic wormholes are the same}, the system of equations $S_0$ and $S_m$ must be simultaneously consistent. If the system of $2n$ linear relations $S_0\cup S_m$ is not consistent, then $\mathscr{P}^{m}_{x_1,\cdots,x_n}$ cannot be equal to $\mathscr{P}^{\vee}$ because of Lemma \ref{coherent graph of basic wormholes are the same}. For each $1\leq i\leq n$, substituting the equation of $y_i$ in $S_0$ into the equation of $y_i$ in $S_m$ (see remark \ref{System of equations Sm}), we obtain the system of relations
    \begin{align}\label{system of relations, main theorem}
        (I-P^m)\vec{x}=k^{(0)}-k^{(m)},
    \end{align}
    where $\vec{x}=(x_1,\cdots,x_n)$, $(k^{(0)}-k^{(m)})_i = k_{n-i \text{ (mod n)}}^{(0)}-k_{n-i \text{ (mod n)}}^{(m)}$, $I$ is the identity map and $P\colon \mathbb{Z}^n\to \mathbb{Z}^{n}$ is the map given by $P(x_1,\cdots,x_n)=(x_{n},x_1,\cdots,x_{n-1})$. 
    By Lemma \ref{criterion for solution for the system S_0 and S_m}, this system is consistent if and only if $\sum_{i\in C_j} (k^{(0)}-k^{(m)})_i=0$ for each cycle $C_j$ in the cycle decomposition induced by $P^{m}$.
    When $2\leq m\leq n-1$, the vector $k^{(0)}-k^{(m)}$ has all the entries equal to $0$, except at exactly four distinct positions: at the two positions where $k^{(0)}_i=1$, in which case $(k^{(0)}-k^{(m)})_i=-2$, and at the two positions where $k^{(m)}_j=1$, in which case $(k^{(0)}-k^{(m)})_j=2$. There is a degenerate case when $n$ is odd and $m=n-1$, where exactly one position corresponds to $-2$ and one position corresponds to $+2$.
    When $m=1$, if $n$ is odd, there is exactly one position corresponding to $-2$ and one position corresponding to $+2$, and if $n$ is even, the vector $k^{(0)}-k^{(m)}$ is the zero vector.
    Therefore, to check the consistency of the system (\ref{system of relations, main theorem}), we must verify
    that, for each orbit, the number of indices $i$ such that $(k^{(0)}-k^{(m)})_i=2$ is equal to the number of indices $j$ such that $(k^{(0)}-k^{(m)})_j=-2$.    
    \begin{itemize}
    \item $\mathbf{If \hspace{0,2em} n \hspace{0,2em} is \hspace{0,2em} odd}$: $k^{(0)}_\frac{n-1}{2}=1$, $k^{(0)}_{n}=1$, $k^{(m)}_{\frac{n-1}{2}+\lceil{\frac{m}{2}}\rceil \text{ (mod n)}}=1$, and $k^{(m)}_{n+\lfloor{\frac{m}{2}}\rfloor \text{ (mod n)}}=1$.
    \begin{enumerate}
        \item If $x_{\frac{n+1}{2}}$ and $x_{\frac{n+1}{2}-\lceil \frac{m}{2} \rceil \text{(mod n)}}$ have the same orbit, there is an integer $\lambda_1$ such that $\frac{n+1}{2}+m\lambda_1\equiv \frac{n+1}{2}-\lceil \frac{m}{2} \rceil $(mod n), i.e. $m\lambda_1\equiv n-\lceil \frac{m}{2} \rceil$ (mod n). This equation has a solution if and only if $\gcd(n,m)\mid n- \lceil \frac{m}{2} \rceil$.
        \item If $x_{\frac{n+1}{2}}$ and $x_{n-\lfloor \frac{m}{2} \rfloor \text{(mod n)}}$ have the same orbit, there is an integer $\lambda_2$ such that $\frac{n+1}{2}+m\lambda_2\equiv n -\lfloor \frac{m}{2} \rfloor$(mod n). It holds if and only if $\gcd(n,m) \mid \frac{n-1}{2}-\lfloor \frac{m}{2} \rfloor$.
        \item If $x_n$ and $x_{\frac{n+1}{2}-\lceil\frac{m}{2}\rceil \text{(mod n)}}$ have the same orbit, there is an integer $\lambda_3$ such that $n+m\lambda_3\equiv \frac{n+1}{2}-\lceil \frac{m}{2}\rceil$(mod n). It holds if and only if $\gcd(n,m)\mid \frac{n+1}{2}-\lceil \frac{m}{2} \rceil$.
        \item If $x_n$ and $x_{n-\lfloor \frac{m}{2} \rfloor \text{(mod n)}}$ have the same orbit, there is an integer $\lambda_4$ such that $n+m\lambda_4\equiv n-\lfloor \frac{m}{2}\rfloor$ (mod n). It holds if and only if $\gcd(n,m)\mid n-\lfloor \frac{m}{2} \rfloor$.
    \end{enumerate}
    
    \item $\mathbf{If \hspace{0,2em}n \hspace{0,2em} is \hspace{0,2em} even}$: $k^{(0)}_{\frac{n}{2}}=1$, $k^{(0)}_{n}=1$, $k_{\frac{n}{2}+\lfloor \frac{m}{2}\rfloor \text{ (mod n)}}^{(m)}=1,$ and $k_{n+\lfloor \frac{m}{2}\rfloor \text{ (mod n)}}^{(m)}=1$.
    \begin{enumerate}
        \item If $x_{\frac{n}{2}}$ and $x_{\frac{n}{2}-\lceil\frac{m}{2}\rceil\text{(mod n)}}$ have the same orbit, there is an integer $\lambda_1'$ such that $\frac{n}{2}+m\lambda'_1\equiv \frac{n}{2}-\lfloor \frac{m}{2} \rfloor$ (mod n). It holds if and only if $\gcd(n,m)\mid n-\lfloor \frac{m}{2}\rfloor$.
        \item If $x_{\frac{n}{2}}$ and $x_{n-\lfloor \frac{m}{2} \rfloor (\text{mod n})}$ have the same orbit, there is an integer $\lambda_2'$ such that $\frac{n}{2}+m\lambda'_2\equiv n-\lfloor \frac{m}{2}\rfloor$ (mod n). It holds if and only if $\gcd(n,m)\mid \frac{n}{2}-\lfloor \frac{m}{2}\rfloor$.
        \item If $x_n$ and $x_{\frac{n}{2}-\lfloor \frac{m}{2}\rfloor \text{(mod n)}}$ have the same orbit, there is an integer $\lambda'_3$ such that $n+m\lambda'_3\equiv \frac{n}{2}-\lfloor\frac{m}{2}\rfloor$(mod n). It holds if and only if $\gcd(n,m)\mid \frac{n}{2}-\lfloor\frac{m}{2}\rfloor$.
        \item If $x_n$ and $x_{n-\lfloor \frac{m}{2}\rfloor}$ have the same orbit, there is an integer $\lambda'_4$ such that $n+m\lambda'_4\equiv n-\lfloor \frac{m}{2}\rfloor$(mod n). It holds if and only if $\gcd(n,m)\mid n-\lfloor \frac{m}{2} \rfloor$. 
    \end{enumerate}   
\end{itemize}
Since $m$ is an integer, we have that $\lceil\frac{m}{2}\rceil+\lfloor \frac{m}{2}\rfloor= m$. The equation in $(1)$ has a solution if and only if the equation in $(4)$ has a solution, and the equation in $(2)$ has a solution if and only if the equation in $(3)$ has a solution. When the  system $S_0\cup S_m$ is consistent, by Remark \ref{one parameter for each cycle in the decomposition}, we have a parametric solution with $\gcd(n,m)$-parameters $x_1,\cdots,x_{\gcd(n,m)}$. 
    On the other hand, for each solution of $S_0\cup S_m$ we can construct the framed accordion triangulations $\mathscr{P}^{0}_{x_1,\cdots,x_n}$ and $\mathscr{P}^{m}_{x_1,\cdots,x_n}$. Since $\mathscr{P}$ has a standard frame, we have that $\mathscr{P}=\mathscr{P}^{0}_{x_1,\cdots,x_n}$. Therefore $\mathscr{P}^{m}_{x_1,\cdots,x_n}$ is a companion of $\mathscr{P}$, so $\mathscr{P}$ is a basic wormhole triangulation.
\end{proof}

\begin{corollary} \label{Basic wormhole triangulations algorithm} (Basic wormhole triangulations algorithm).\\
    \textbf{Input}: An integer $n\geq 2$.\\
    \textbf{Output}: All parametric families of basic wormhole triangulations with $n$ weights.
    \begin{itemize}
        \item{\textbf{Step I}}: Let $x_1,\cdots,x_n$ be $n$ integer variables with the condition $x_{i}\geq 3$ for every $1\leq i\leq n.$ Let $S_0$ be the linear system of equations associated with the coherent graph $G_{\mathscr{P}^{0}_{x_1,\cdots,x_n}}$, where $\mathscr{P}^{0}_{x_1,\cdots,x_n}$ is an element of the standard family of accordion triangulations.
    Set $m=1$ and continue to Step II. 
    \item{\textbf{Step II}}: 
    \begin{enumerate}
        \item If $m\leq n-1$: we denote by $S_m$ the linear system of equations associated with the coherent graph $G_{\mathscr{P}^{m}_{x_1,\cdots,x_n}}$, where $\mathscr{P}^{m}_{x_1,\cdots,x_n}$ is obtained from $\mathscr{P}^{0}_{x_1,\cdots,x_n}$ via $m$-coherent rotation of diagonals. By Theorem \ref{Main theorem}, we can check when the  system $S_0\cup S_m$ is consistent.
        \begin{enumerate}
            \item If the system $S_0\cup S_m$ is inconsistent: the pair ($\mathscr{P}^{0}_{x_1,\cdots,x_n} \hspace{0,3em} , \hspace{0,3em} \mathscr{P}^{m}_{x_1,\cdots,x_n}$) does not give a pair of basic wormhole triangulations for any choice of $x_1,\cdots,x_n$ with $x_i\geq 3$ and $1\leq i \leq n$. Redefine $m$ as $m+1$ and return to Step II.
            \item If the system $S_0\cup S_m$ is consistent: the explicit parametric description of the solutions gives a pair ($\mathscr{P}^{0}_{x_1,\cdots,x_n} \hspace{0,3em} , \hspace{0,3em} \mathscr{P}^{m}_{x_1,\cdots,x_n}$) of parametric basic wormhole triangulations with $n$ weights  where $\mathscr{P}=\mathscr{P}^{0}_{x_1,\cdots,x_n}$ and  $\mathscr{P}^{m}_{x_1,\cdots,x_n}$ is a companion of $\mathscr{P}$. Define $n_m:=m$, and then redefine $m$ as $m+1$ and return to Step II.
        \end{enumerate}
        \item If $m=n$: Go to step III.
    \end{enumerate}
    \item{\textbf{Step III}}: By applying the same cyclic permutation to the numbers in the extended zero chains of both parametric basic wormhole triangulations  ($\mathscr{P}^{0}_{x_1,\cdots,x_n}$, $\mathscr{P}^{n_m}_{x_1,\cdots,x_n})$ arising from the consistent systems in Step II, we describe all the basic wormhole triangulations with $n$ weights. 
    The only condition is that the new hidden index, after the cyclic permutation of the entries in both extended zero chains, must be greater than $1$ in both extended zero chains.
\end{itemize}
\end{corollary}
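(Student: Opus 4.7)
The plan is to reduce the correctness of this algorithm to Theorem~\ref{Main theorem} together with the observation that cyclically permuting an extended zero chain merely reselects the hidden index of the same underlying triangulation. Step~I is pure setup; Steps~II and~III require separate arguments.

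First I would show that Step~II correctly enumerates all standard-framed basic wormhole triangulations together with their companions. For each $1\le m\le n-1$, Theorem~\ref{Main theorem} provides a dichotomy: either $S_0\cup S_m$ is inconsistent, in which case no element of the coherent family $\mathscr{P}_m$ is a companion of an element of the standard family $\mathscr{P}_0$; or $S_0\cup S_m$ is consistent and yields a $\gcd(n,m)$-parameter solution producing a pair $(\mathscr{P}^{0}_{x_1,\ldots,x_n},\mathscr{P}^{m}_{x_1,\ldots,x_n})$ of parametric basic wormhole triangulations with a standard frame related as companions. Looping over $m$ from $1$ to $n-1$ therefore exhausts every standard-framed basic wormhole pair by Lemma~\ref{coherent graph of basic wormholes are the same} and Definition~\ref{Standard family of accordion triangulation of weight n and coherent rotation of diagonals for elements of the standard family of weight n}.

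For Step~III I would invoke Remark~\ref{cyclically rotate entries of an extended zero chain corresponds to choose another hidden index}: a simultaneous cyclic permutation applied to both extended zero chains of a standard-framed pair just reselects the hidden index in the two underlying triangulated polygons, and hence still describes the same pair of WW-decompositions of the same cyclic quotient singularity, provided both new hidden indices are greater than~$1$ (by Definition~\ref{definition of basic wormhole singularity}). Conversely, given any basic wormhole triangulation $\mathscr{Q}$ with $n$ weights, Proposition~\ref{basic wormhole are accordion triangulations} identifies $\mathscr{Q}$ as a framed accordion triangulation with hidden index greater than~$1$; by Remark~\ref{Every accordion triangulation of s+1 sides with s>3 has a weight adjacent to a 1.} together with Lemma~\ref{diagonals fit into weight vertices}, at least one weight is of type~(I) and adjacent to a vertex of index~$1$, so rotating both $\mathscr{Q}$ and its companion $\mathscr{Q}^{\vee}$ in the same way to place this weight as the new hidden index and the adjacent $1$ as the first entry produces a standard-framed pair covered by Step~II. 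Inverting the rotation recovers $\mathscr{Q}$.

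The main obstacle, I expect, is bookkeeping in the last paragraph: one must confirm that simultaneously rotating both members of a companion pair actually preserves the companion relation (so that Step~III outputs genuine basic wormhole pairs rather than unrelated triangulations), and that the admissibility criterion ``new hidden index $>1$ in both chains'' is the precise condition separating valid rotations from invalid ones. Both points reduce, via Remark~\ref{pair of basic wormhole triangulations differ in 4 positions.} (which records that the two extended zero chains of a companion pair differ only in four positions where $1$'s and $2$'s are swapped), to a short finite case analysis on whether a rotated hidden index lands on one of these four distinguished positions, and if so, on which side of the swap.
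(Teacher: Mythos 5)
Your proposal is correct and follows essentially the same route as the paper's proof: reduce to the standard-frame case handled by Theorem \ref{Main theorem}, then pass between an arbitrary basic wormhole triangulation and a standard-framed one by a cyclic permutation of the extended zero chains (equivalently, a reselection of the hidden index via Remark \ref{cyclically rotate entries of an extended zero chain corresponds to choose another hidden index} and Lemma \ref{bound for the number of WW-decompositions under cyclic permutation}), with the admissibility condition being exactly that the new hidden index exceeds $1$ in both chains. The verification points you flag at the end (preservation of the companion relation under simultaneous rotation, and the existence of a type (I) weight adjacent to an index-$1$ vertex guaranteeing a standard frame) are precisely what the paper handles, somewhat more tersely, through Lemma \ref{bound for the number of WW-decompositions under cyclic permutation} and the observation that cyclic permutation acts on the WW-sequence and hence on all of its WW-decompositions at once.
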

\begin{proof}
    Let $\mathscr{P}$ be a basic wormhole triangulation with $n$ weights. By Definition \ref{definition of basic wormhole triangulation}, $\mathscr{P}$ must be of the form $\mathscr{P}=[b_1,\cdots,b_s\mid b_0]$ for some unknown $b_i$. 
    It must come from a WW-sequence $\{b'_1,\cdots,b'_s\}$ for some unknown $b'_{i}>1$. Via cyclic permutation (Definition \ref{m-cyclic permutation}) of $\{b'_1,\cdots,b'_s\}$ we can construct a WW-sequence with WW-index $b_{0}''\geq 3$ and such that one of its WW-decompositions correspond to the underlying triangulated polygon defined by $\mathscr{P}$ but with a standard frame. This is possible since Lemma \ref{bound for the number of WW-decompositions under cyclic permutation} tells us that in the process we do not lose WW-decompositions, and at the level of WW-decompositions this corresponds to a cyclic permutation of the numbers in the zero chains. By Remark \ref{cyclically rotate entries of an extended zero chain corresponds to choose another hidden index}, this corresponds to a change of hidden index of the corresponding triangulated polygons. By Theorem \ref{Main theorem}, we have an explicit parametric description of all basic wormhole triangulations with a standard frame. Since the WW-decompositions of the last WW-sequence are determined, we just reverse the cyclic permutation. It corresponds to a change of frame in the associated triangulated polygons (that have already been determined). Therefore, $\mathscr{P}$ is determined by one of the basic wormhole triangulations with standard frame and an appropriate change of hidden index. The condition in Step III specifies the possible changes in the hidden index for the pair ($\mathscr{P}^{0}_{x_1,\cdots,x_n}$, $\mathscr{P}^{n_m}_{x_1,\cdots,x_n})$.
\end{proof}
\remark The condition in Step III ensures that cyclic permutations of the WW-sequences defined by the pair  ($\mathscr{P}^{0}_{x_1,\cdots,x_n}$, $\mathscr{P}^{n_m}_{x_1,\cdots,x_n})$ is a WW-sequence.\\

As an application of the new techniques introduced in this paper we present a simple alternative proof of \cite{Flipping}*{Theorem 4.3}.
\begin{theorem}\label{Alternative proof of HTU theorem on maximal number of extremal P-resolutions}
    Let $0<q<\Delta$ be coprime integers. The cyclic quotient singularity $\frac{1}{\Delta}(1,q)$ can admit at most two distinct extremal P-resolutions.
\end{theorem}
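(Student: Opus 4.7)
The plan is to argue by contradiction: assume $\frac{1}{\Delta}(1,q)$ admits three distinct extremal P-resolutions and derive a contradiction using the coherent graph machinery developed above.

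First, by Proposition \ref{Bijection between extremal P-resolutions and pairs of zero continued fractions with two marks}, three distinct extremal P-resolutions correspond to three distinct WW-decompositions of the dual chain. Applying the HTU algorithm (Lemma \ref{HTU algorithm}) preserves the count of WW-decompositions and produces either a basic WW-sequence (WW-index greater than $1$) or the degenerate sequence $\{3,\ldots,3,2,2,3,\ldots,3\}$; the latter has exactly two WW-decompositions by Remark \ref{Degenerate case has 2 WW-decompositions}, already contradicting three. We are therefore reduced to a basic wormhole singularity admitting three distinct basic wormhole triangulations $\mathscr{P}$, $\mathscr{P}^{\vee}_1$, $\mathscr{P}^{\vee}_2$.

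By Proposition \ref{basic wormhole are accordion triangulations} each is a framed accordion triangulation. I would equip $\mathscr{P}$ with a standard frame (Definition \ref{standard framed}), writing $\mathscr{P} = \mathscr{P}^{0}_{x_1,\ldots,x_n}$, and give $\mathscr{P}^{\vee}_1, \mathscr{P}^{\vee}_2$ the frames in which the hidden index is the weight $x_n$; Lemma \ref{coherent graph of basic wormholes are the same} then applies, so all three share the same coherent graph. Theorem \ref{Main theorem} identifies $\mathscr{P}^{\vee}_j = \mathscr{P}^{m_j}_{x_1,\ldots,x_n}$ for some distinct $1 \leq m_1 < m_2 \leq n-1$, so the single tuple $(x_1,\ldots,x_n)$ with every $x_i \geq 3$ simultaneously satisfies $S_0 \cup S_{m_1}$ and $S_0 \cup S_{m_2}$.

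The core of the argument is to rule this out. Following the proof of Theorem \ref{Main theorem}, each consistency condition reduces to $(I - P^{m_j})\vec{x} = k^{(0)} - k^{(m_j)}$, where the right-hand side has only four nonzero entries equal to $\pm 2$ (fewer in the boundary cases $m_j \in \{1, n-1\}$), placed at positions determined explicitly by $n$ and $m_j$. Along each $P^{m_j}$-orbit the weights must therefore differ by prescribed sums of $\pm 2$'s. Intersecting the orbit partitions of $P^{m_1}$ and $P^{m_2}$ and imposing both sets of jumps yields an overdetermined system on $\vec{x}$: I expect to show that the combined constraints are either incompatible modulo $2$, or force some $x_i < 3$, violating the definition of a weight. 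The main obstacle is the combinatorial case analysis, which splits according to the parity of $n$, the parities of $m_1, m_2$, and the relative sizes of $\gcd(n,m_1)$ and $\gcd(n,m_2)$; the boundary cases $m_j \in \{1, n-1\}$ and small $n$ require separate verification, but should all yield directly to the parametric orbit formulas already derived in Theorem \ref{Main theorem}.
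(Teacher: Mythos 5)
Your reduction steps reproduce the paper's argument exactly: contradiction with three WW-decompositions, the HTU algorithm and the degenerate case, passage to a basic wormhole triangulation with a standard frame, equality of the three coherent graphs, and the identification $\mathscr{P}^{\vee}_j=\mathscr{P}^{m_j}_{x_1,\cdots,x_n}$ leading to the two systems $(I-P^{m_j})\vec{x}=k^{(0)}-k^{(m_j)}$. The gap is in the core step, where you only state what you ``expect to show,'' and the two mechanisms you name for the contradiction cannot actually occur. First, every entry of $k^{(0)}-k^{(m_j)}$ lies in $\{0,\pm 2\}$, so the sum of prescribed jumps around any closed walk is even and there is never an obstruction modulo $2$. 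Second, the combined system involves only differences $x_i-x_j$, so its solution set is invariant under adding a common constant to all coordinates; if it were consistent as a linear system it would automatically have solutions with every $x_i\geq 3$, so the constraint $x_i\geq 3$ can never be the source of the contradiction either. The only possible obstruction is outright inconsistency of the combined linear system, i.e.\ the existence of a closed walk in the joint orbit structure along which the prescribed jumps sum to a nonzero value --- and your proposal does not explain how to produce such a walk.

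That is precisely what the paper's proof supplies, and it does so uniformly, with no case analysis on the parities of $n,m_1,m_2$ or on the gcd's. It encodes the systems (and their difference $(P^{m_2}-P^{m_1})\vec{x}=k^{(m_2)}-k^{(m_1)}$) as a difference-constraint graph and invokes the standard criterion that feasibility fails exactly when there is a negative-weight cycle. The cycle is exhibited explicitly: a $P^{m_1}$-orbit gives a weight-$0$ cycle containing one $-2$ edge and one $+2$ edge (by the consistency of $S_0\cup S_{m_1}$ from Theorem \ref{Main theorem}), and since $m_1\neq m_2$ one can reroute around the $+2$ edge using weight-$0$ edges supplied by the $P^{m_2}$-system, producing a closed walk of total weight $-2$. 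If you want to salvage your orbit-intersection approach you would need to prove an equivalent statement --- that some word in $P^{m_1},P^{m_2}$ returning to its starting index accumulates a nonzero sum of jumps --- and the case analysis you defer is exactly where the difficulty lives.
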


\begin{proof}
    Assume that the cyclic quotient singularity $\frac{1}{\Delta}(1,q)$ admits $J\geq 3$ distinct extremal P-resolutions. That is, the Hirzebruch-Jung continued fraction of $\frac{\Delta}{\Delta-q}$ defines a WW-sequence with $J\geq 3$ distinct WW-decompositions. Let $b_0$ be the WW-index (Definition \ref{WW-index}) of this WW-sequence.
    Due to the HTU algorithm, Lemma \ref{HTU algorithm} and Remark \ref{Degenerate case has 2 WW-decompositions}, it is sufficient to assume that $b_0>1$. Via $r$-cyclic permutation (Definition \ref{m-cyclic permutation}) of the WW-sequence (allowing $r=0$), we can construct a WW-sequence with a WW-index $b_0'\geq 3$, where one of the associated triangulations is a framed accordion triangulation $\mathscr{P}$ with a standard frame. By Lemma \ref{bound for the number of WW-decompositions under cyclic permutation}, $J\leq J'$ where $J'$ is the number of WW-decompositions of the last WW-sequence. To complete the proof, it is sufficient to show that $J'\geq 3$ is impossible.
    Pick any three WW-decompositions of the last WW-sequence (one of them being the one corresponding to the triangulation $\mathscr{P}$).
    By Theorem \ref{Main theorem}, the system of equations $S_0\cup S_m\cup S_{m'}$ must be simultaneously consistent for $1\leq m\neq m' \leq n$, where $n$ is the number of weights of the triangulations. 
    As in the proof of Theorem \ref{Main theorem}, we can construct linear system of relations of the form
    \begin{align}\label{system 1 in alternative proof}
        (I-P^{m})\vec{x} & = k^{(0)}-k^{(m)},
    \end{align}
    \begin{align}\label{system 2 in alternative proof}
        (I-P^{m'})\vec{x} & = k^{(0)}-k^{(m')},
    \end{align}
    where $\vec{x}=(x_1,\cdots,x_n)$, $(k^{(0)}-k^{(m)})_i = k_{n-i \text{ (mod n)}}^{(0)}-k_{n-i \text{ (mod n)}}^{(m)}$, $(k^{(0)}-k^{(m')})_i = k_{n-i \text{ (mod n)}}^{(0)}-k_{n-i \text{ (mod n)}}^{(m')}$, $I$ is the identity map and $P\colon \mathbb{Z}^n\to \mathbb{Z}^{n}$ is the map given by $P(x_1,\cdots,x_n)=(x_{n},x_1,\cdots,x_{n-1})$. Subtracting the equations of (\ref{system 2 in alternative proof}) from the equations of (\ref{system 1 in alternative proof}), we obtain the system of relations
    \begin{align}\label{system 3 in alternative proof}
        (P^{m'}-P^{m})\vec{x}=k^{(m')}-k^{(m)}.
    \end{align}
    We can construct a constraint graph via the system of relations (\ref{system 1 in alternative proof}),(\ref{system 2 in alternative proof}), and (\ref{system 3 in alternative proof}) as follows: Given the system of inequalities $x_i-x_j\leq k_{ij}$, we define the weighted, directed graph $G$, where the set vertices consists of one vertex $x_i$ for each coordinate of $\vec{x}$ and one additional vertex $x_0$. For each inequality $x_i-x_j\leq k_{ij}$ we have a directed edge from $x_j$ to $x_i$ with weight equal to $k_{ij}$, and for each $x_i$ we have a directed edge from $x_0$ to $x_i$ with weight equal to $0$. 
    By \cite{ConstraintsGraph}*{Thm 24.9}, if $G$ contains a negative-weight cycle, then there is no feasible solution for these systems. Therefore, to obtain the contradiction, it is sufficient to show that $G$ contains a negative-weight cycle. As in Theorem \ref{Main theorem}, the vectors $k^{(0)}-k^{(m)}$ and $k^{(0)}-k^{(m')}$ have exactly $4$ entries that are not equal to $0$: two entries equal to $2$ and two entries equal to $-2$ (there are degenerate cases when $m$ and $m'$ take the values $1$ or $n-1$. However, the argument below remains valid, since we assume $m\neq m'$. In these cases, it may be necessary to swap the roles of the systems (\ref{system 1 in alternative proof}) and (\ref{system 2 in alternative proof})). 
    Let $i$ be an index such that $(k^{(0)}-k^{(m)})_i=-2$. We will show that there is a negative-weight cycle based at $x_{n-i \text{ (mod n)}}$. By Lemma \ref{criterion for solution for the system S_0 and S_m}, the orbit of $x_{n-i\text{ (mod n)}}$ must contain a $x_j$ such that $(k^{(0)}-k^{(m)})_{n-j}=2$ and the system (\ref{system 1 in alternative proof}) defines a $0$-weighted cycle based at $x_{n-i\text{ (mod n)}}$. To construct the negative loop, we make a slight modification to this 0-weighted cycle based at $x_{n-i\text{ (mod n)}}$.
    Since $m\neq m'$, we can exclude the $+2$-edge in this $0$-weighted cycle by using a $0$-edge connecting the orbit of $x_{n-i\text{ (mod n)}}$ with respect to (\ref{system 1 in alternative proof}) with the orbit of $x_{n-i \text{ (mod n)}}$ with respect to (\ref{system 2 in alternative proof}), i.e., by using the system (\ref{system 3 in alternative proof}). Then we use some $0$-edges in the orbit of $x_{n-i\text{ (mod n)}}$ with respect to (\ref{system 2 in alternative proof}) until we can exclude $x_j$ from the cycle (i.e., ensure that our path does not pass through $x_j$). Once this is achieved, we return to the orbit of $x_{n-i \text{ (mod n)}}$ with respect to  (\ref{system 1 in alternative proof}) to complete the cycle.
\end{proof}

\remark It is worth noting that the approach developed in this paper should naturally extend to more general P-resolutions. In our work, we have relied heavily on the structure of WW-sequences, where subtraction occurs at exactly two distinct positions. In a more general scenario, where subtraction occurs at $n\geq 2$ distinct positions, a similar approach is expected to be applicable. However, this extension would require a generalized definition of accordion triangulations.

\section{Examples}

In this section, we will apply the algorithm from Corollary \ref{Basic wormhole triangulations algorithm} to classify all basic wormhole triangulations with at most $5$ weights. By Theorem \ref{Alternative proof of HTU theorem on maximal number of extremal P-resolutions}, a basic wormhole triangulation $\mathscr{P}$ has a unique companion. In this section, we denote by $\mathscr{P}^{\vee}$ the unique companion of $\mathscr{P}$.
\begin{example}\label{The simplest basic wormhole triangulations}(Classification of basic wormhole triangulations with $2$ weights). \\
By Definition \ref{Standard family of accordion triangulation of weight n and coherent rotation of diagonals for elements of the standard family of weight n}, the graph $G_{\mathscr{P}^{0}_{x_1,x_2}}$ is the graph in Figure \ref{coherent graph with 2 weights, standard family.}. 
There is only one coherent rotation of diagonals given by the coherent graph $G_{\mathscr{P}^{1}_{x_1,x_2}}$ in Figure \ref{coherent graph with 2 weights, 1-coherent rotation.}.

\begin{figure}[ht]
    \centering
    \hspace*{-2em}
    \begin{minipage}{0.55\textwidth}  
        \centering
        \includegraphics[width=10cm]{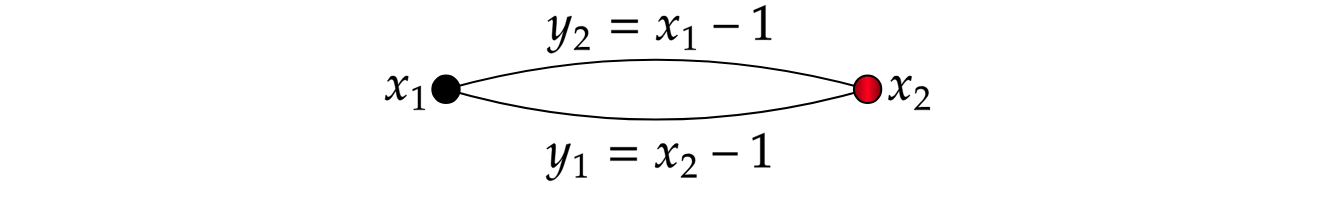}  
        \caption{Coherent graph $G_{\mathscr{P}^{0}_{x_1,x_2}}$.}
        \label{coherent graph with 2 weights, standard family.}
    \end{minipage}
    \hfill
    \hspace*{-5em}
    \begin{minipage}{0.55\textwidth}
        \centering
        \includegraphics[width=10cm]{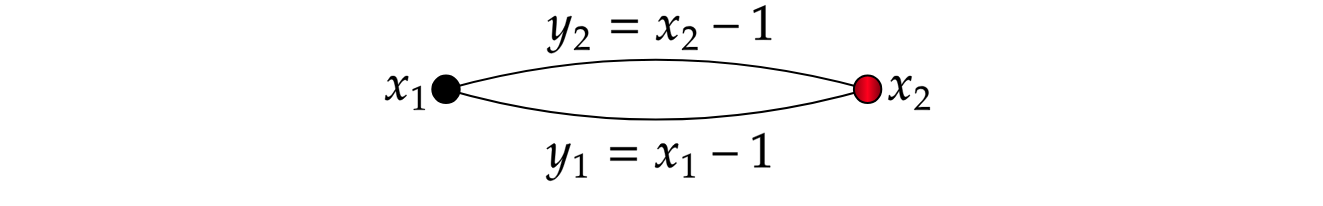}  
        \caption{\mbox{Coherent graph $G_{\mathscr{P}^{1}_{x_1,x_2}}$}.}
        \label{coherent graph with 2 weights, 1-coherent rotation.}
    \end{minipage}
\end{figure}

    By Theorem \ref{Main theorem}, we have that $\mathscr{P}^{1}_{x_1,x_2}=(\mathscr{P}^{0}_{x_1,x_2})^\vee$ if and only if $x_1-1=x_2-1$. Thus $(x_1,x_2)=(t,t)$ for $t\geq 3$. 

    \begin{figure}[h]
    \begin{tabular}{ll}
    \includegraphics[width=10cm]{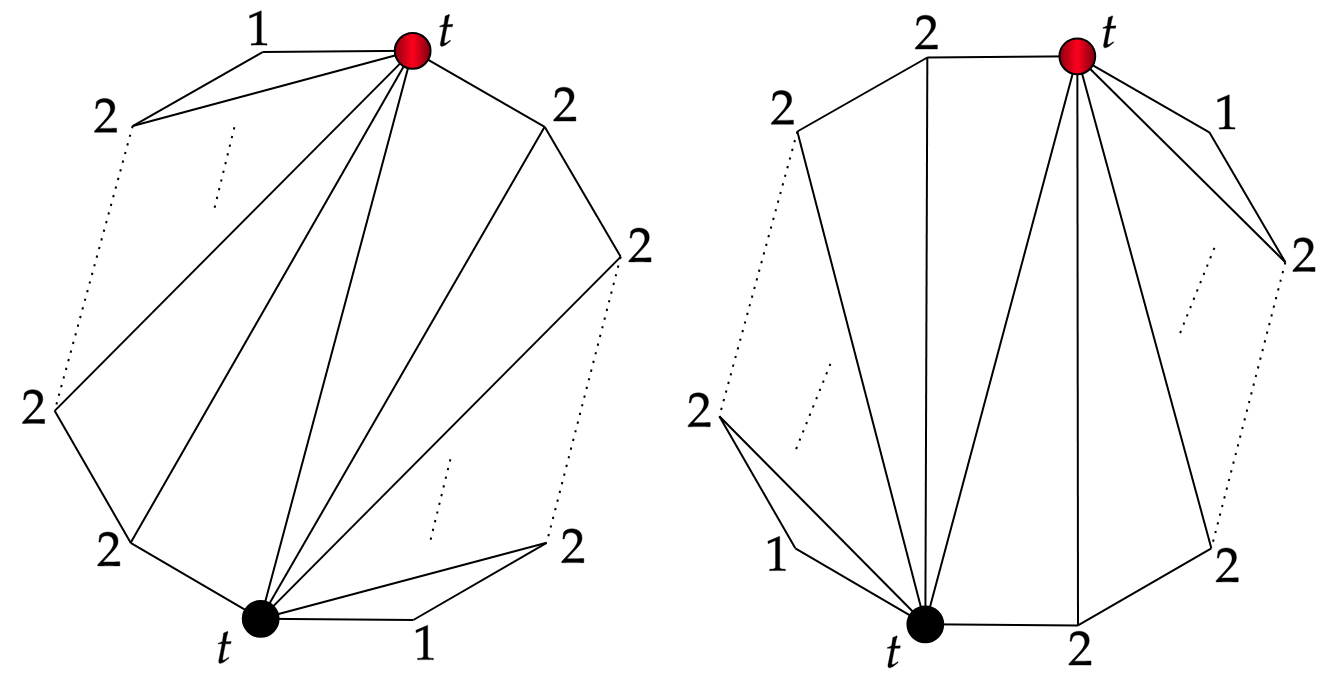}
    \end{tabular}
    \caption{Basic wormhole triangulations with $2$ weights.}
    \label{Wormhole triangulations with $2$ weights.}
    \end{figure}
All basic wormhole triangulations with $2$ weights are either the triangulations shown in Figure \ref{Wormhole triangulations with $2$ weights.} or those obtained by changing the frames in these triangulations, subject to the condition in Step III of Corollary \ref{Basic wormhole triangulations algorithm}.
\end{example}

\begin{example}(Classification of basic wormhole triangulations with $3$ weights).\\
    The graph $G_{\mathscr{P}^{0}_{x_1,x_2,x_3}}$ is the graph in Figure \ref{coherent graph with 3 weights, standard family.}. 
    The coherent rotations of diagonals of $G_{\mathscr{P}^{0}_{x_1,x_2,x_3}}$ are the graphs in Figure \ref{coherent graph with 3 weights, rotation of diagonals.}.

    \begin{figure}[ht]
    \centering
    \hspace*{-6.5em}
    \begin{minipage}{0.65\textwidth}  
        \centering
        \includegraphics[width=9cm]{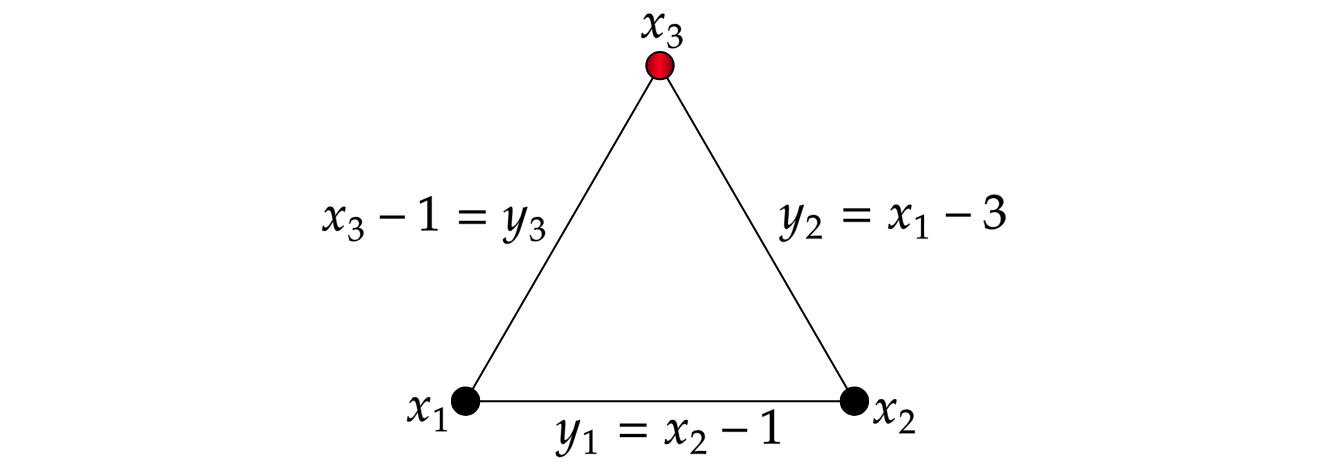}  
        \caption{Coherent graph $G_{\mathscr{P}^{0}_{x_1,x_2,x_3}}$.}
        \label{coherent graph with 3 weights, standard family.}
    \end{minipage}
    \hfill
    \hspace*{-7em}
    \begin{minipage}{0.65\textwidth}
        \centering
        \includegraphics[width=9cm]{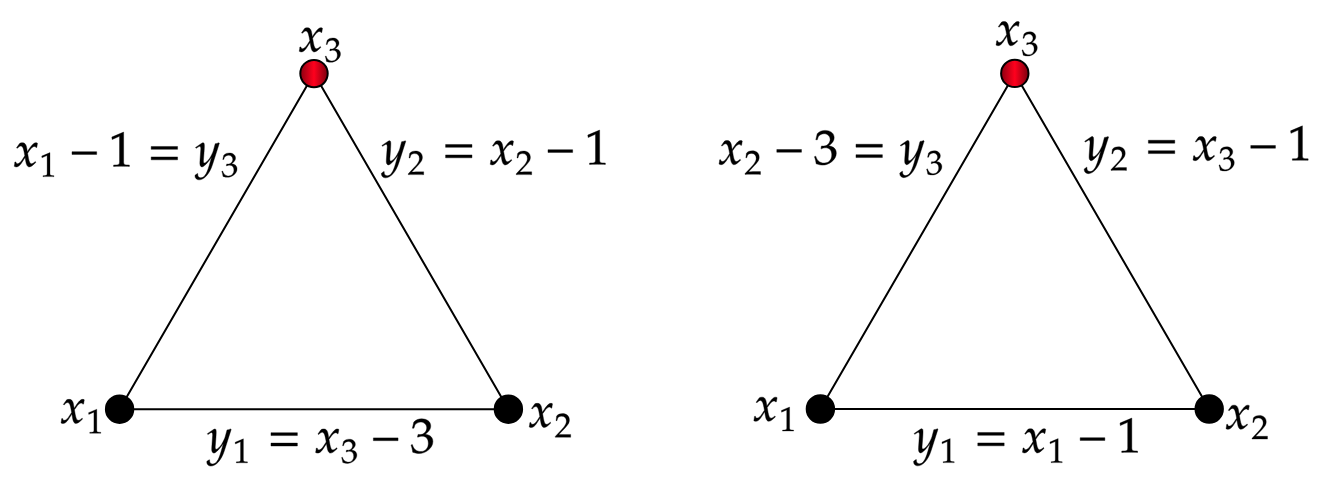}  
        \caption{\mbox{Coherent rotations of diagonals.}}
        \label{coherent graph with 3 weights, rotation of diagonals.}
    \end{minipage}
\end{figure}

    \begin{enumerate}
        \item  $\mathscr{P}^{1}_{x_1,x_2,x_3}=(\mathscr{P}^{0}_{x_1,x_2,x_3})^{\vee}$ if and only if we have the relations:
        \begin{align*}
            x_3-1=x_1-1, \hspace{1em} x_2-1=x_3-3, \hspace{1em} x_1-3=x_2-1.
        \end{align*}
        Thus $(x_1,x_2,x_3)=(t,t-2,t)$ for $t\geq 5$. It corresponds to the triangulations in Figure \ref{Basic wormhole triangulations with 3 weights from P0 and P1.}.
    
        \begin{figure}[h]
        \begin{tabular}{ll}
        \hspace*{-5em}
        \includegraphics[width=10cm]{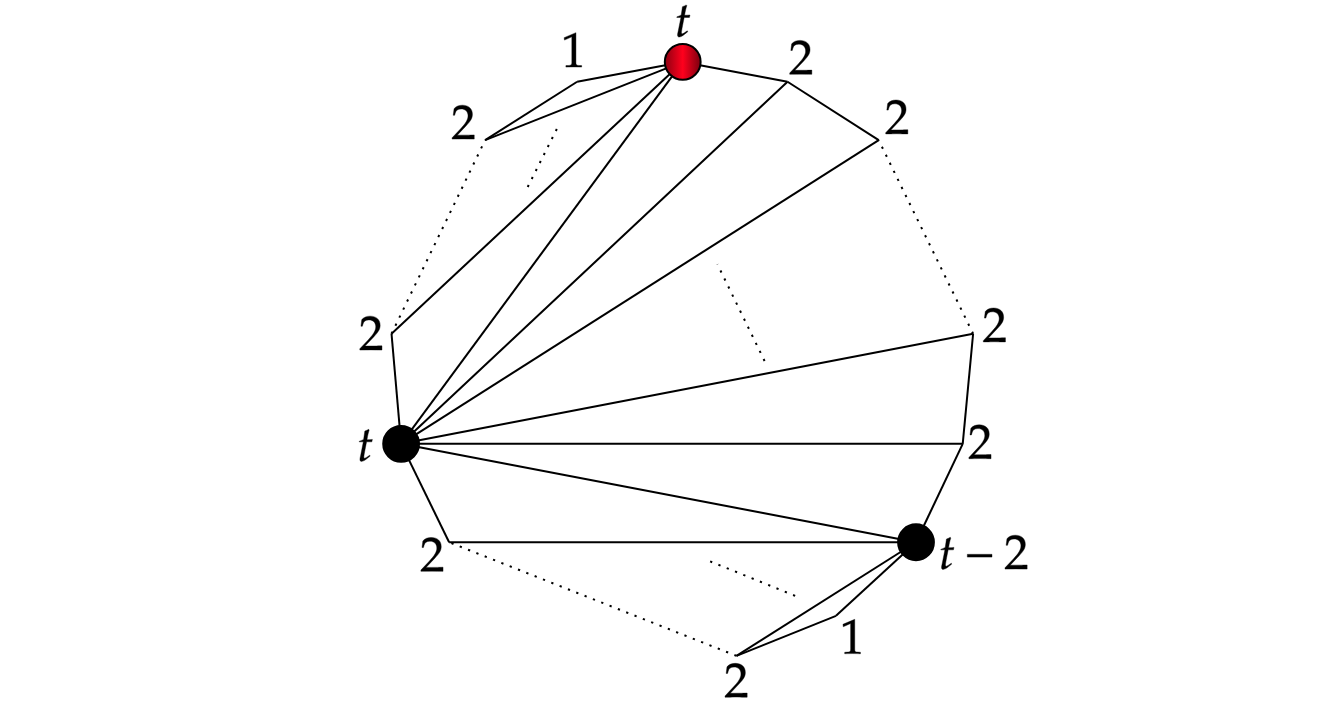}
        & \hspace*{-7em}
        \includegraphics[width=10cm]{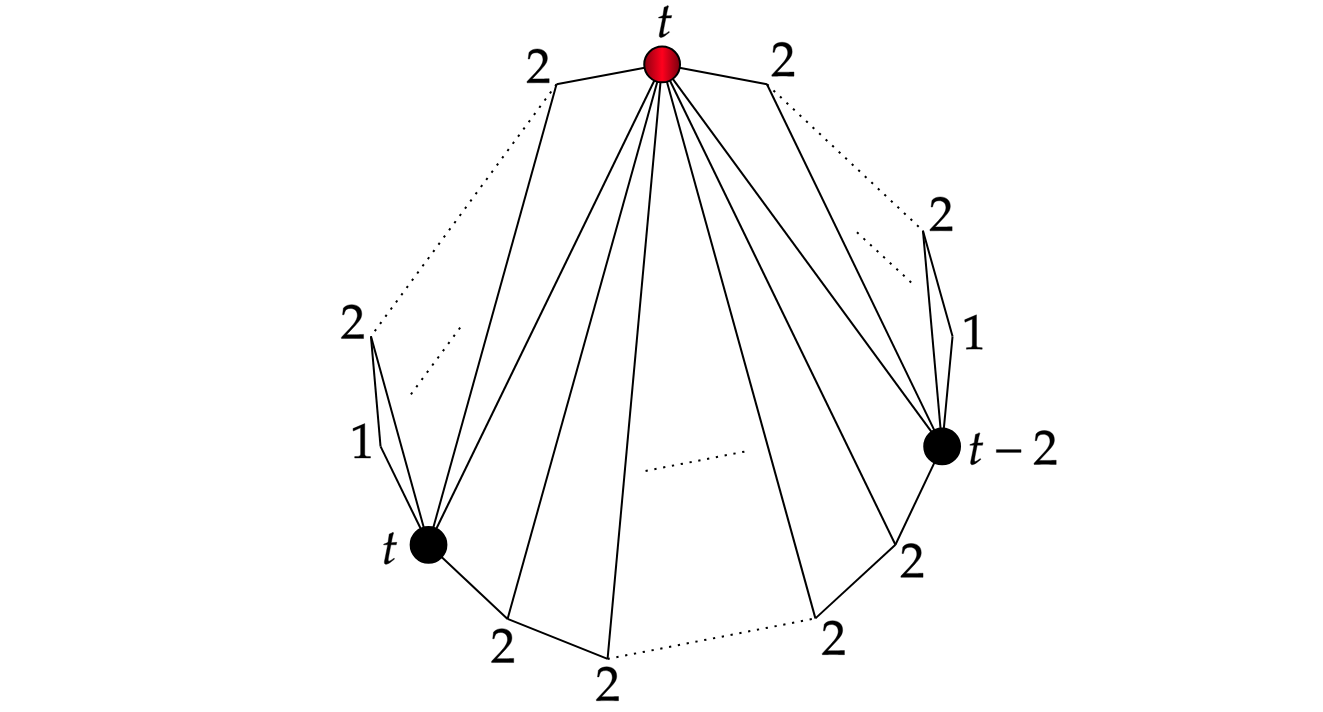}
        \end{tabular}
        \caption{Basic wormhole triangulations with $3$ weights from $\mathscr{P}_0$ and $\mathscr{P}_1$.}
        \label{Basic wormhole triangulations with 3 weights from P0 and P1.}
        \end{figure}
        
        \item $\mathscr{P}^{2}_{x_1,x_2,x_3}=(\mathscr{P}^{0}_{x_1,x_2,x_3})^{\vee}$ if and only if we have the relations:
        \begin{align*}
            x_3-1=x_2-3, \hspace{1em} x_2-1=x_1-1, \hspace{1em} x_1-3=x_3-1.
        \end{align*}
        Thus $(x_1,x_2,x_3)=(t,t,t-2)$ for  $t\geq 5$. It corresponds to the triangulations in Figure \ref{Basic wormhole triangulations with 3 weights from P0 and P2.}.
        \begin{figure}[h]
        \begin{tabular}{ll}
        \hspace*{-5em}
        \includegraphics[width=10cm]{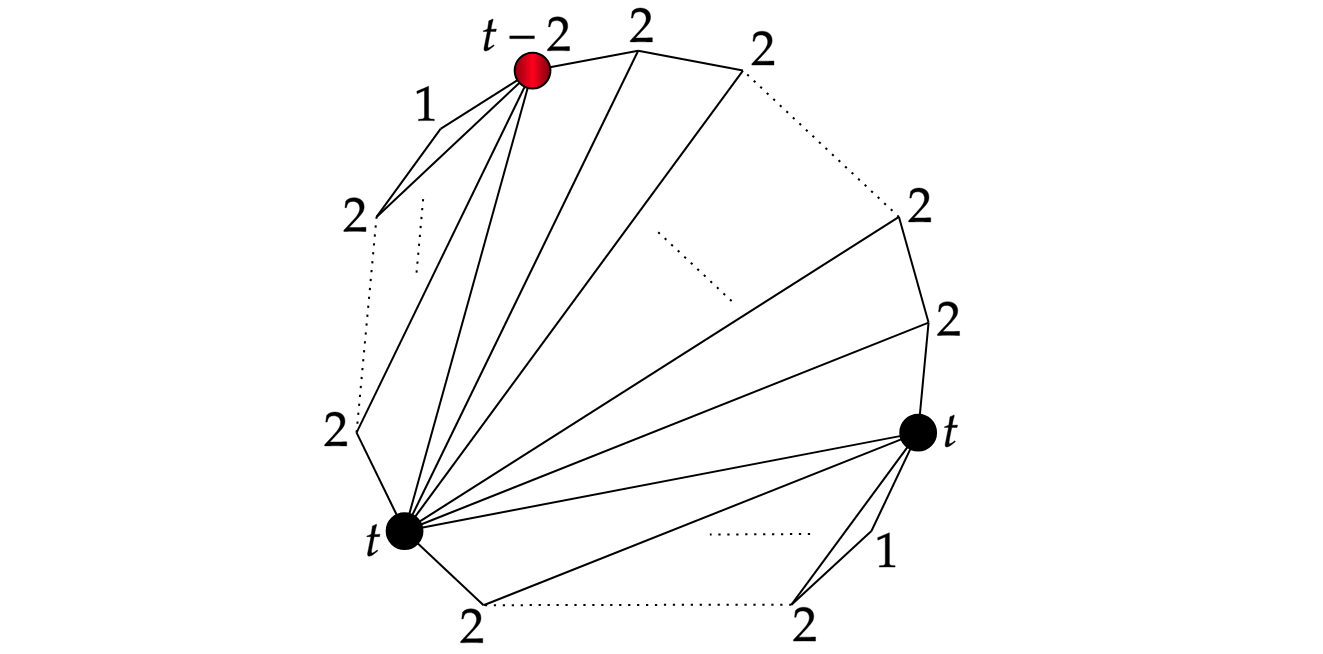}
        & \hspace*{-7em}
        \includegraphics[width=10cm]{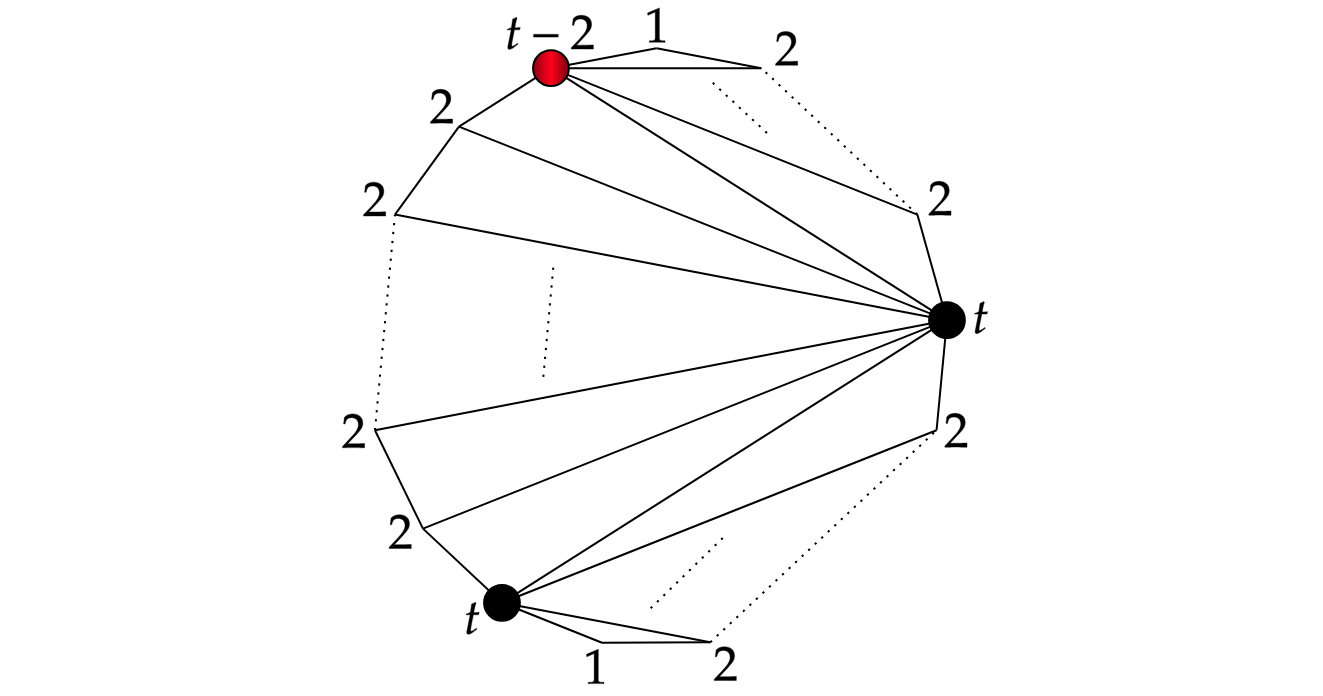}
        \end{tabular}
        \caption{Basic wormhole triangulations with $3$ weights from $\mathscr{P}_0$ and $\mathscr{P}_2$.}
        \label{Basic wormhole triangulations with 3 weights from P0 and P2.}
        \end{figure}
    \end{enumerate}
All basic wormhole triangulations with $3$ weights are either the triangulations shown in Figures \ref{Basic wormhole triangulations with 3 weights from P0 and P1.},\ref{Basic wormhole triangulations with 3 weights from P0 and P2.}, or those obtained by changing the frames in these triangulations, subject to the condition in Step III of Corollary \ref{Basic wormhole triangulations algorithm}.
\end{example}

\begin{example}(Classification of basic wormhole triangulations with $4$ weights).\\
    The graph $G_{\mathscr{P}^{0}_{x_1,x_2,x_3,x_4}}$ is the graph in Figure \ref{coherent graph with $4$ weights.}. The coherent rotations of diagonals of $G_{\mathscr{P}^{0}_{x_1,x_2,x_3,x_4}}$ are the graphs in Figure \ref{coherent graph with 4 weights, rotation of diagonals.}.
    
    \begin{figure}[h]
    \begin{tabular}{ll}
    \includegraphics[width=8cm]{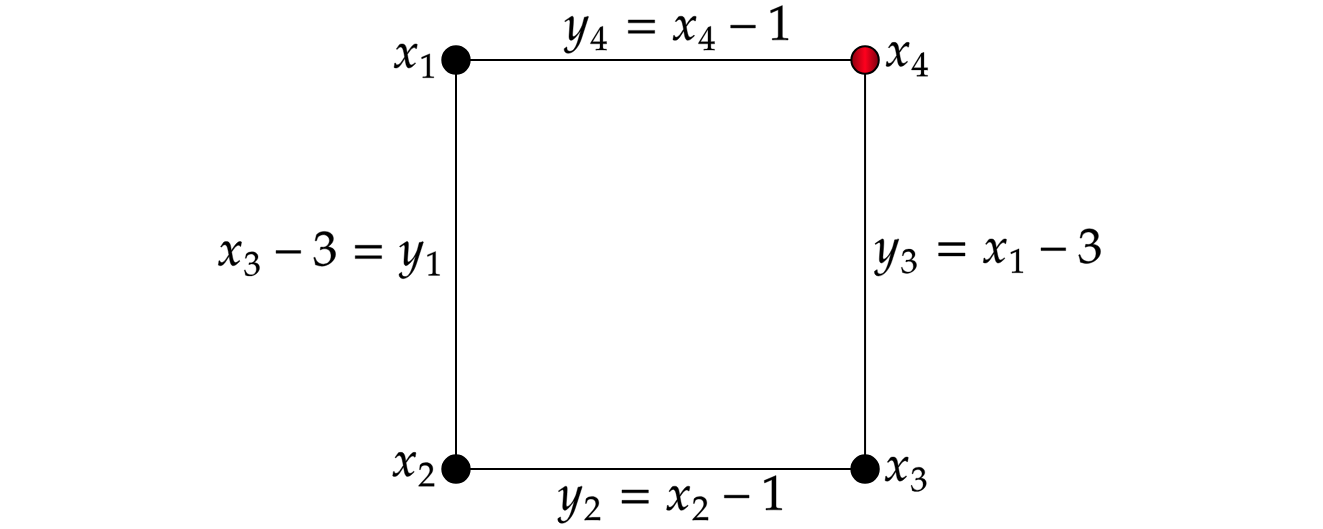}
    \end{tabular}
    \caption{Coherent graph $G_{\mathscr{P}^{0}_{x_1,x_2,x_3,x_4}}$.}
    \label{coherent graph with $4$ weights.}
    \end{figure}

    \begin{figure}[h]
    \begin{tabular}{ll}
    \includegraphics[width=10cm]{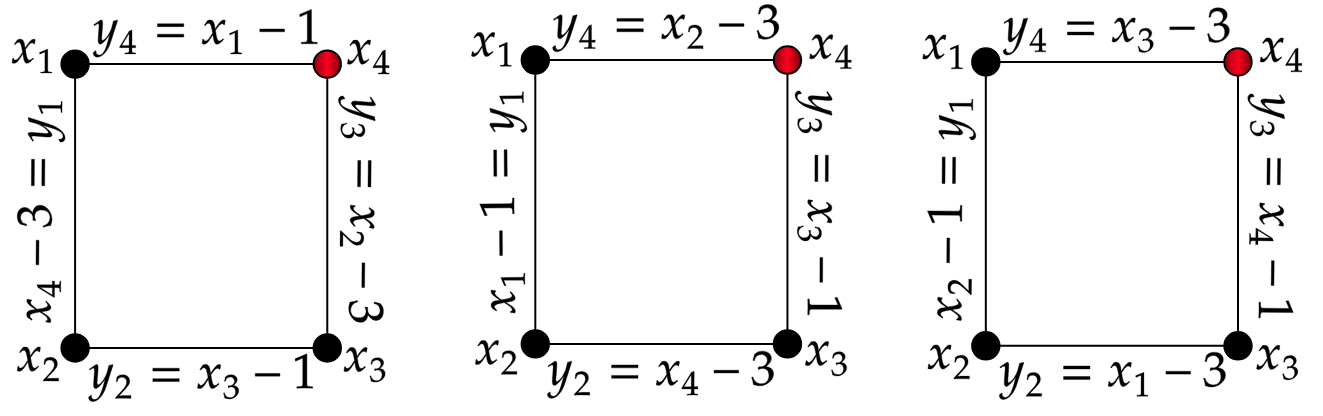}
    \end{tabular}
    \caption{Coherent rotations of diagonals.}
    \label{coherent graph with 4 weights, rotation of diagonals.}
    \end{figure}

\begin{enumerate}
    \item $\mathscr{P}^{1}_{x_1,x_2,x_3,x_4}=(\mathscr{P}^{0}_{x_1,x_2,x_3,x_4})^{\vee}$ if and only if we have the relations:
    $$x_3-3=x_4-3, \hspace{1em}x_2-1=x_3-1, \hspace{1em} x_1-3=x_2-3, \hspace{1em} x_4-1=x_1-1.$$
    Thus $(x_1,x_2,x_3,x_4)=(t,t,t,t)$ for $t\geq 3$. It corresponds to the triangulations in Figure \ref{Basic wormhole triangulations with 4 weights from P0 and P1.}.
    \begin{figure}[h]
        \begin{tabular}{ll}
        \includegraphics[width=10cm]{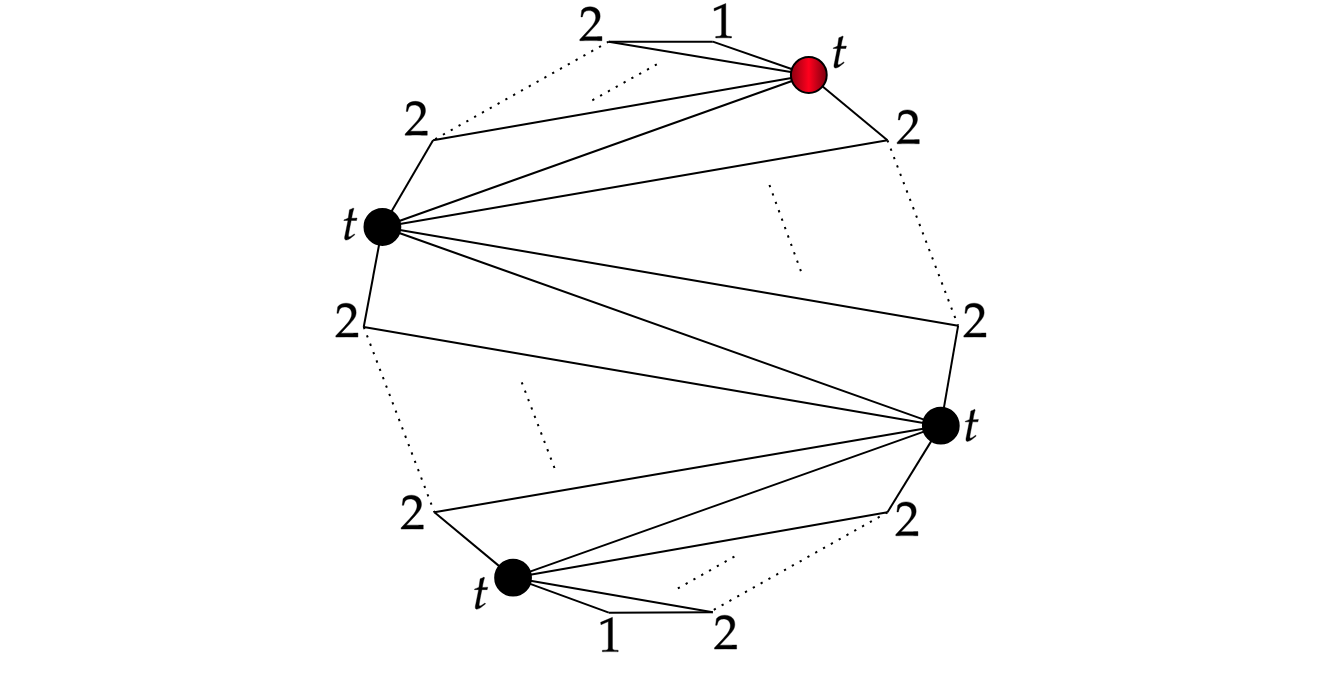}
        & \hspace*{-7em}
        \includegraphics[width=10cm]{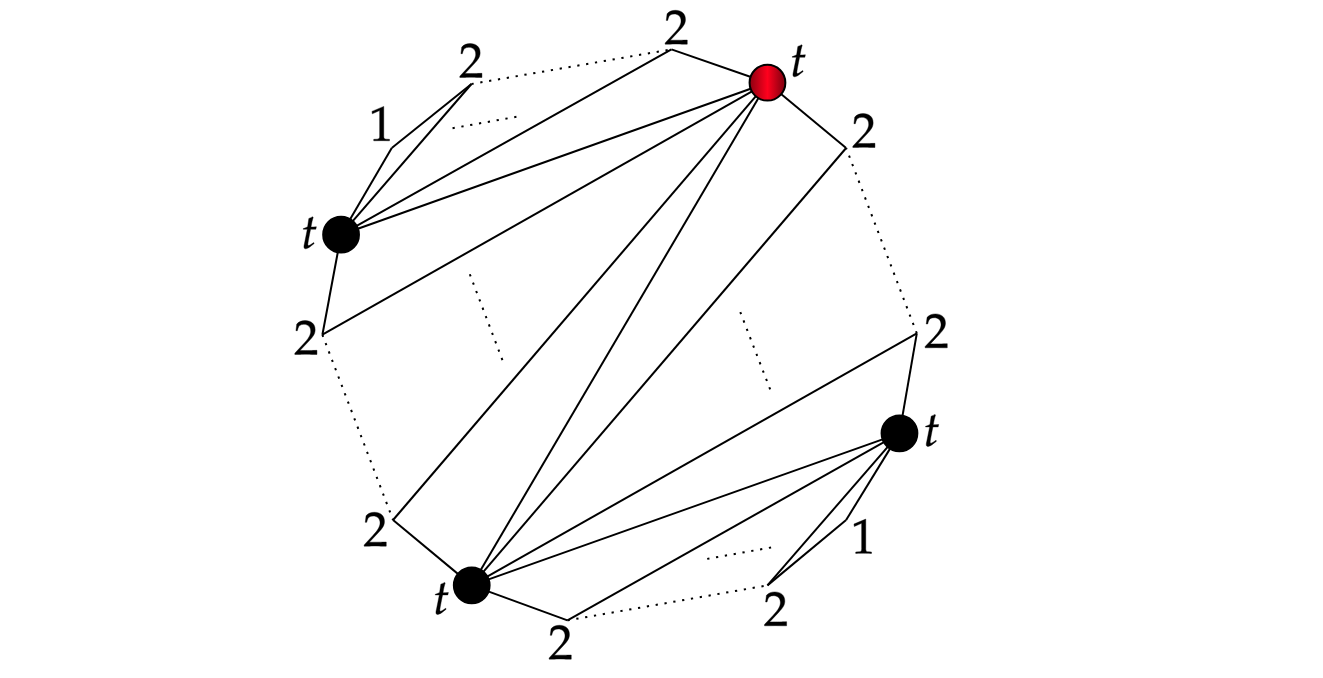}
        \end{tabular}
        \caption{Basic wormhole triangulations with $4$ weights from $\mathscr{P}_0$ and $\mathscr{P}_1$.}
        \label{Basic wormhole triangulations with 4 weights from P0 and P1.}
    \end{figure}
    \item $\mathscr{P}^{2}_{x_1,x_2,x_3,x_4}=(\mathscr{P}^{0}_{x_1,x_2,x_3,x_4})^{\vee}$ if and only if we have the relations:
    $$x_3-3=x_1-1, \hspace{1em} x_2-1=x_4-3, \hspace{1em} x_1-3=x_3-1, \hspace{1em} x_4-1=x_2-3.$$
    Since $x_3=x_1+2$ and $x_1=x_3+2$, this system is inconsistent. Indeed, by Theorem \ref{Main theorem}, we can conclude that this case ($n=4$ and $m=2$) does not produce a pair of basic wormhole triangulations because $\text{gcd}(n,m) \nmid n-\lfloor \frac{m}{2}\rfloor$ and $\text{gcd}(n,m)\nmid \frac{n}{2}-\lfloor \frac{m}{2}\rfloor$.
    \item $\mathscr{P}^{3}_{x_1,x_2,x_3,x_4}=(\mathscr{P}^{0}_{x_1,x_2,x_3,x_4})^{\vee}$ if and only if we have the relations:
    $$x_3-3=x_2-1, \hspace{1em} x_2-1=x_1-3, \hspace{1em} x_1-3=x_4-1, \hspace{1em} x_4-1=x_3-3.$$
    Thus $(x_1,x_2,x_3,x_4)=(t,t-2,t,t-2)$ for $t\geq 5$. It corresponds to the triangulations in Figure \ref{Basic wormhole triangulations with 4 weights from P0 and P3.}.
    \begin{figure}[h]
        \hspace*{-4em}
        \begin{tabular}{ll}
        \includegraphics[width=10cm]{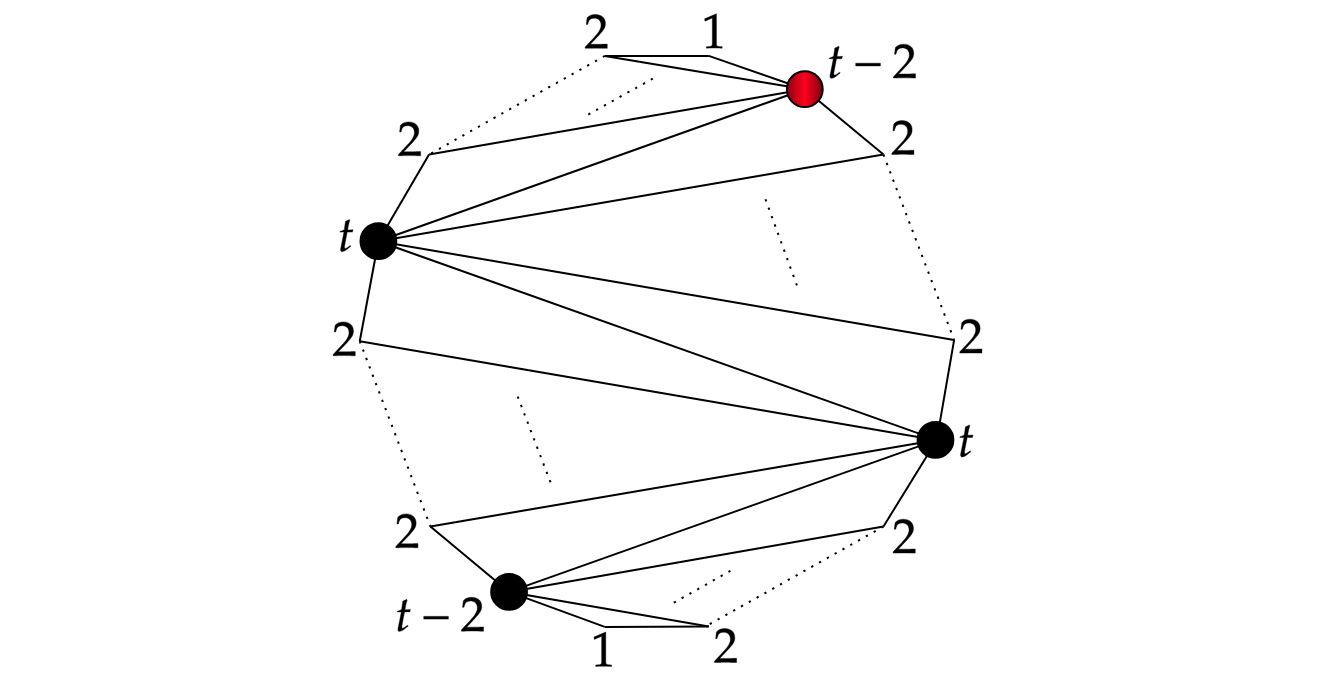}
        & \hspace*{-7em}
        \includegraphics[width=10cm]{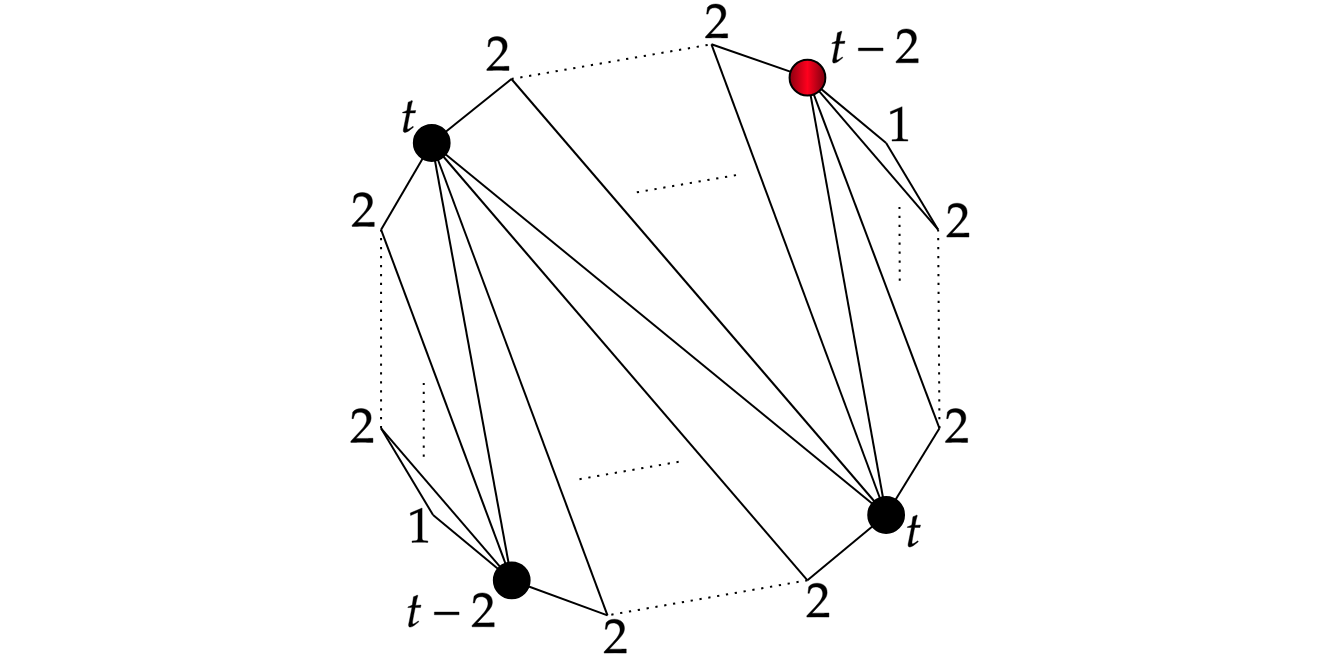}
        \end{tabular}
        \caption{Basic wormhole triangulations with $4$ weights from $\mathscr{P}_0$ and $\mathscr{P}_3$.}
        \label{Basic wormhole triangulations with 4 weights from P0 and P3.}
    \end{figure}
\end{enumerate}
All basic wormhole triangulations with $4$ weights are either the triangulations shown in Figures \ref{Basic wormhole triangulations with 4 weights from P0 and P1.},\ref{Basic wormhole triangulations with 4 weights from P0 and P3.}, or those obtained by changing the frames in these triangulations, subject to the condition in Step III of Corollary \ref{Basic wormhole triangulations algorithm}.
\end{example}

\begin{example}
    (Classification of basic wormhole triangulations with $5$ weights).\\  
    The graph $G_{\mathscr{P}^{0}_{x_1,x_2,x_3,x_4,x_5}}$ is the graph in Figure \ref{coherent graph with $5$ weights.}.
    \begin{figure}[h]
    \begin{tabular}{ll}
    \includegraphics[width=9cm]{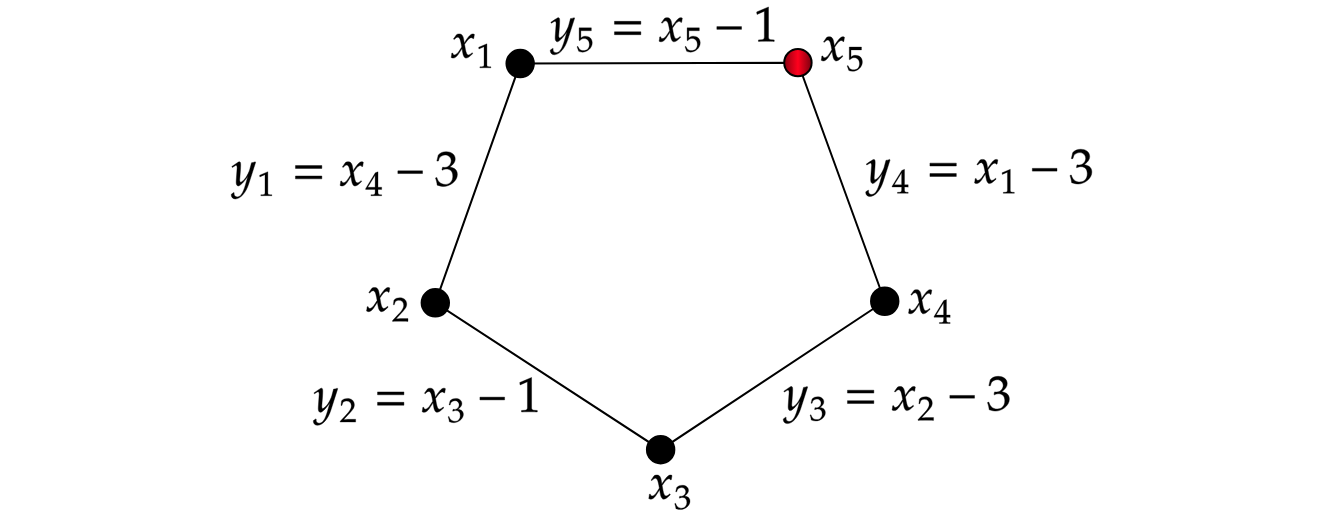}
    \end{tabular}
    \caption{Coherent graph $G_{\mathscr{P}^{0}_{x_1,x_2,x_3,x_4,x_5 }}$.}
    \label{coherent graph with $5$ weights.}
    \end{figure}
    \begin{enumerate}
        \item $\mathscr{P}^{1}_{x_1,x_2,x_3,x_4,x_5}=(\mathscr{P}^{0}_{x_1,x_2,x_3,x_4,x_5})^{\vee}$ if and only if we have the relations:
        $$x_4-3=x_5-3,\hspace{1em} x_3-1=x_4-3, \hspace{1em} x_2-3=x_3-1,\hspace{1em} x_1-3=x_2-3, \hspace{1em} x_5-1=x_1-1.$$
        Thus $(x_1,x_2,x_3,x_4,x_5)=(t,t,t-2,t,t)$ for $t\geq 5$. It corresponds to the triangulations in Figure \ref{Basic wormhole triangulations with 5 weights from P0 and P1.}.
        \begin{figure}[h]
        \hspace*{-5em}
        \begin{tabular}{ll}
        \includegraphics[width=10cm]{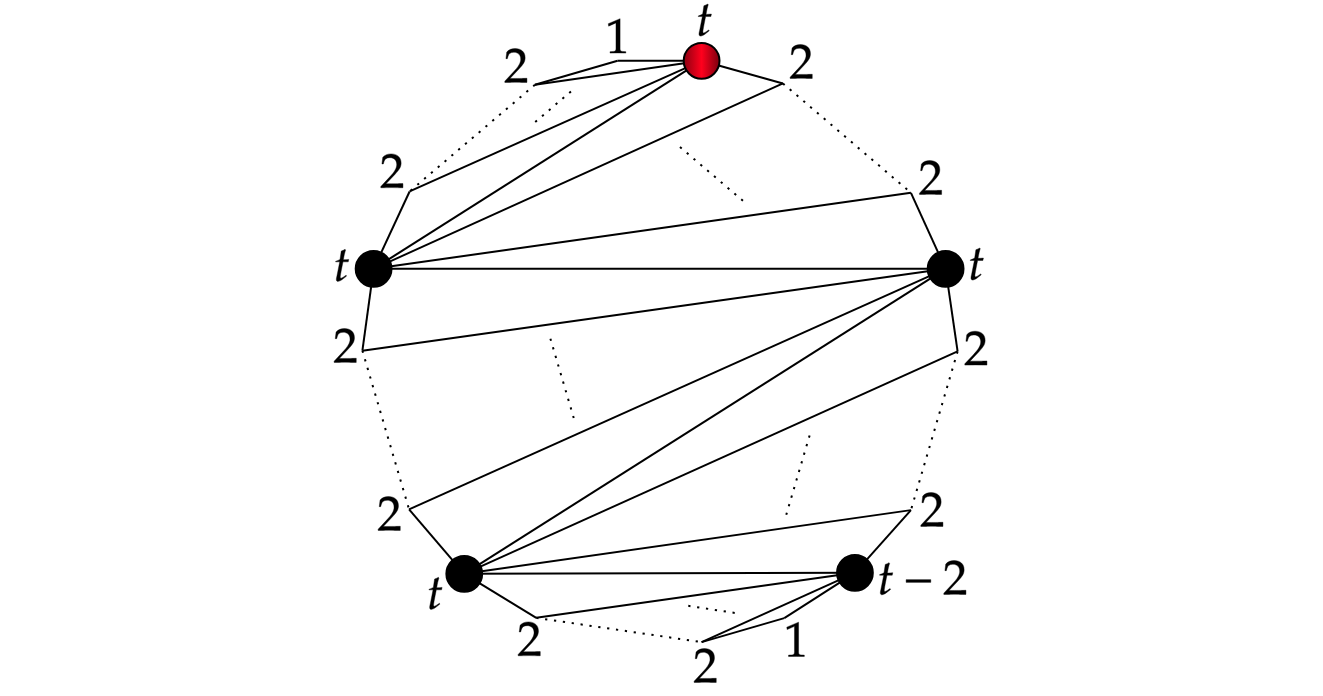}
        & \hspace*{-6em}
        \includegraphics[width=10cm]{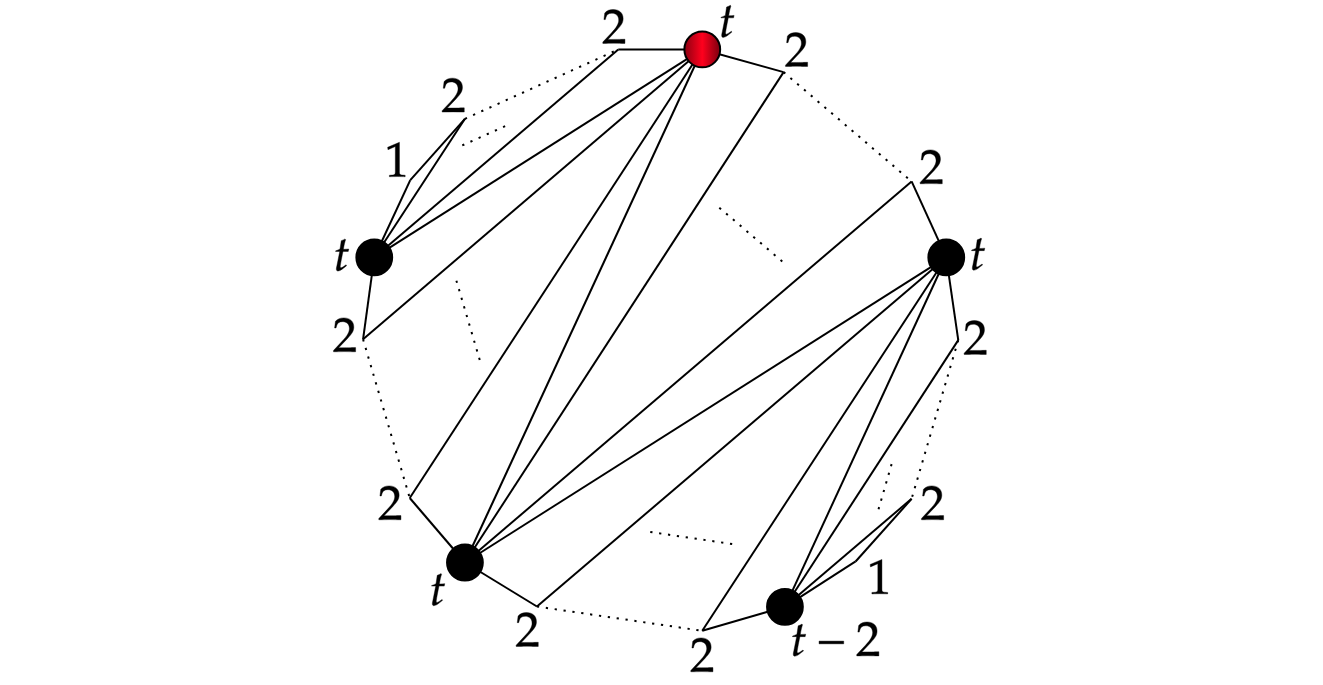}
        \end{tabular}
        \caption{Basic wormhole triangulations with $5$ weights from $\mathscr{P}_0$ and $\mathscr{P}_1$.}
        \label{Basic wormhole triangulations with 5 weights from P0 and P1.}
        \end{figure}

        \item $\mathscr{P}^{2}_{x_1,x_2,x_3,x_4,x_5}=(\mathscr{P}^{0}_{x_1,x_2,x_3,x_4,x_5})^{\vee}$ if and only if we have the relations:
        $$x_4-3=x_1-1,\hspace{1em} x_3-1=x_5-3, \hspace{1em} x_2-3=x_4-1,\hspace{1em} x_1-3=x_3-3, \hspace{1em} x_5-1=x_2-3.$$
        Thus $(x_1,x_2,x_3,x_4,x_5)=(t,t+4,t,t+2,t+2)$ for $t\geq 3$. It corresponds to the triangulations in Figure \ref{Basic wormhole triangulations with 5 weights from P0 and P2.}.

        \begin{figure}[h]
        \hspace*{-5em}
        \begin{tabular}{ll}
        \includegraphics[width=10cm]{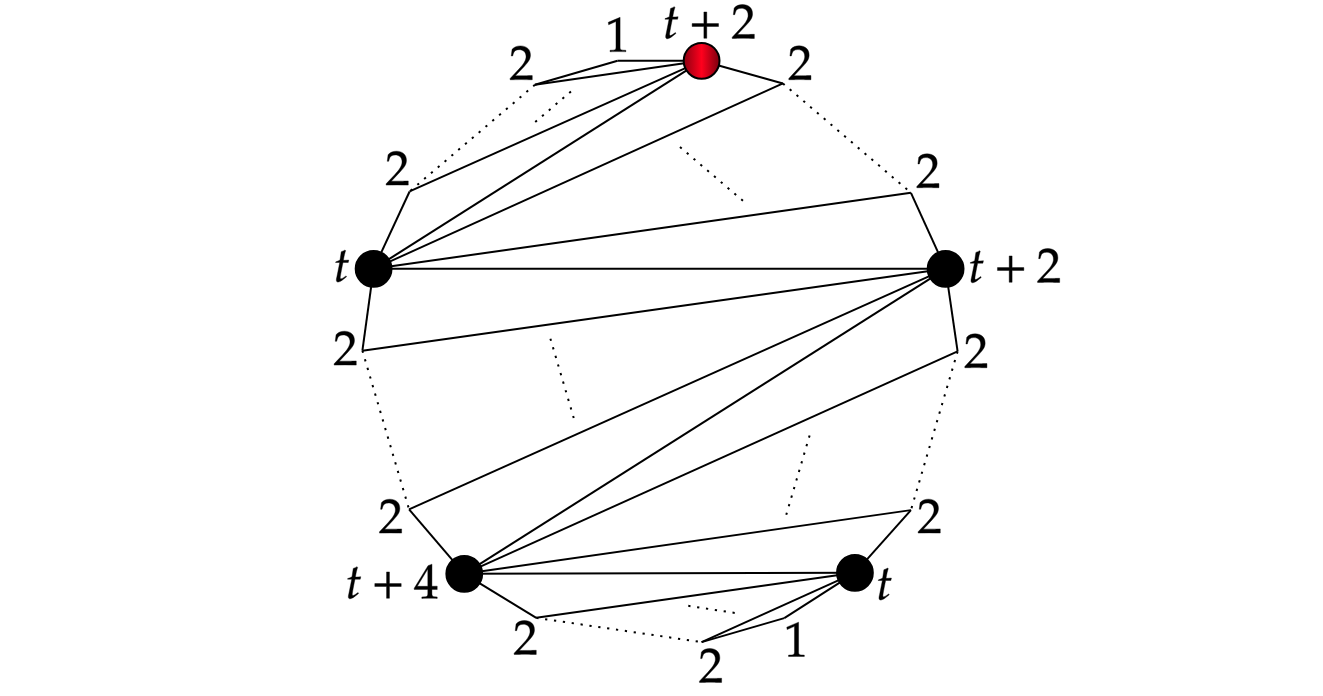}
        & \hspace*{-6em}
        \includegraphics[width=10cm]{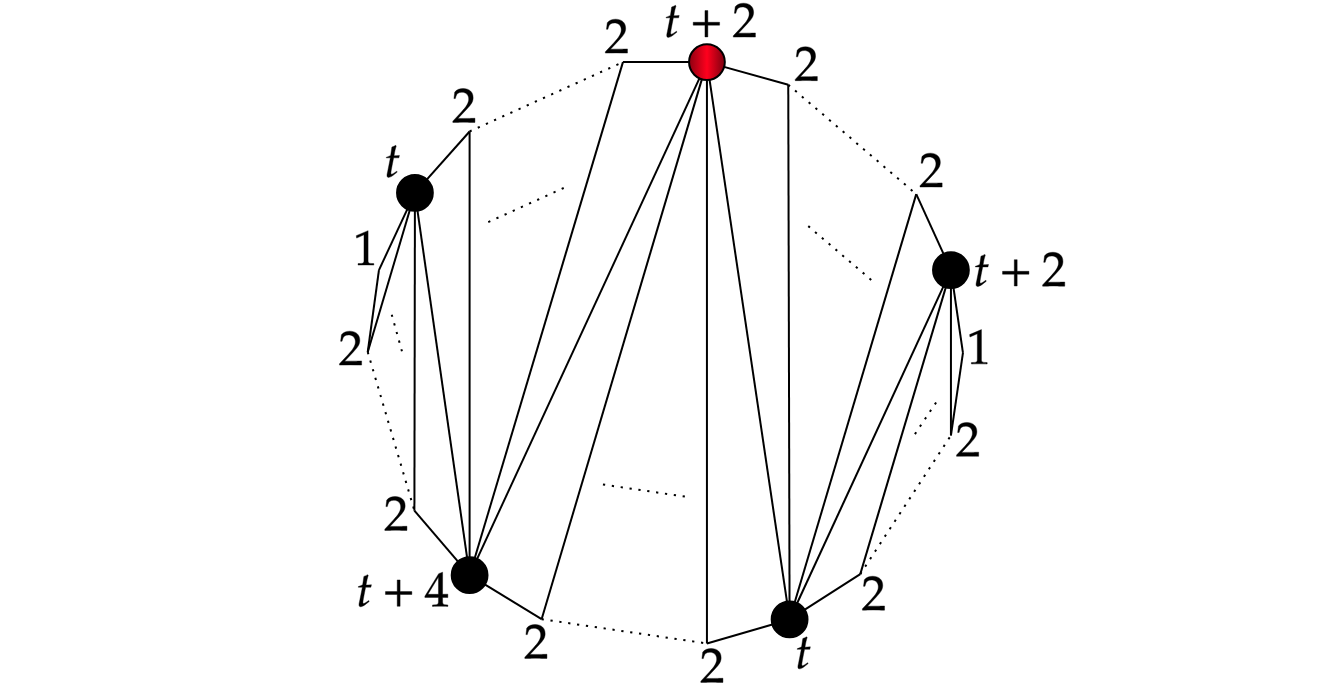}
        \end{tabular}
        \caption{Basic wormhole triangulations with $5$ weights from $\mathscr{P}_0$ and $\mathscr{P}_2$.}
        \label{Basic wormhole triangulations with 5 weights from P0 and P2.}
        \end{figure}
        
        \item $\mathscr{P}^{3}_{x_1,x_2,x_3,x_4,x_5}=(\mathscr{P}^{0}_{x_1,x_2,x_3,x_4,x_5})^{\vee}$ if and only if we have the relations:
        $$x_4-3=x_2-1,\hspace{1em} x_3-1=x_1-3, \hspace{1em} x_2-3=x_5-3,\hspace{1em} x_1-3=x_4-1, \hspace{1em} x_5-1=x_3-3.$$
        Thus $(x_1,x_2,x_3,x_4,x_5)=(t,t-4,t-2,t-2,t-4)$ for $t\geq 7$. It corresponds to the triangulations in Figure \ref{Basic wormhole triangulations with 5 weights from P0 and P3.}.
        \begin{figure}[h]
        \hspace*{-5em}
        \begin{tabular}{ll}
        \includegraphics[width=10cm]{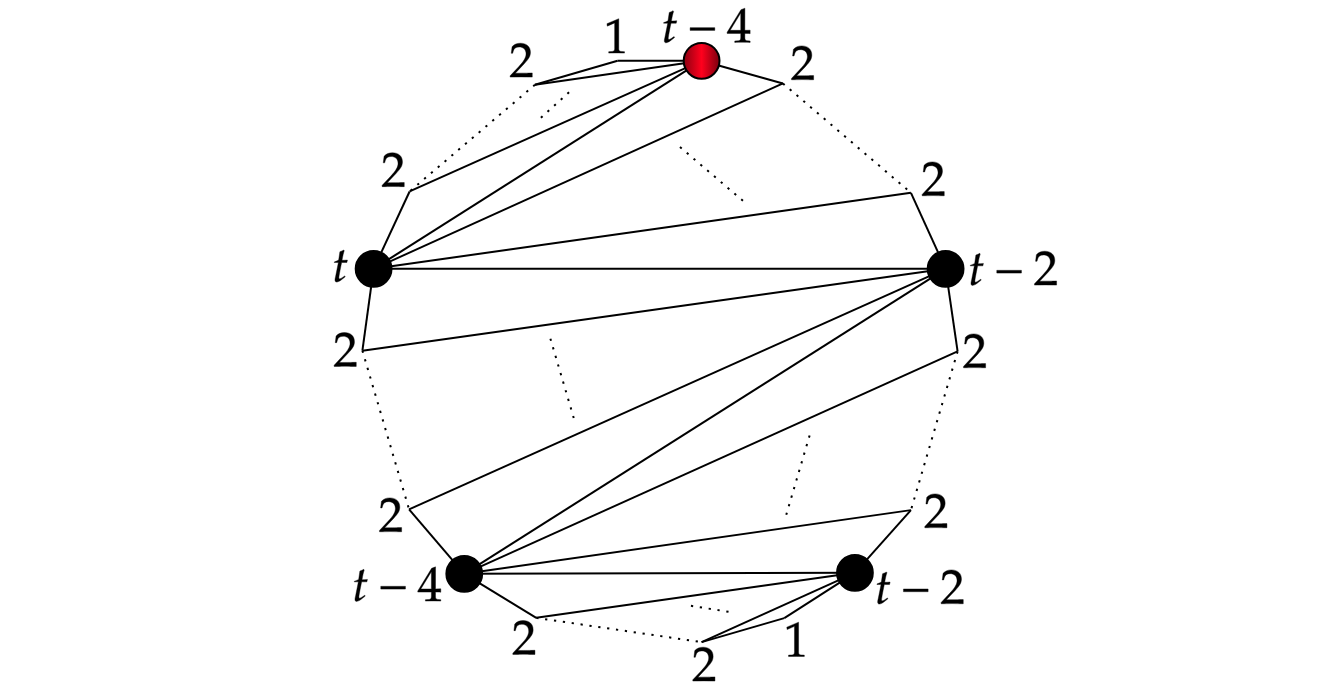}
        & \hspace*{-6em}
        \includegraphics[width=10cm]{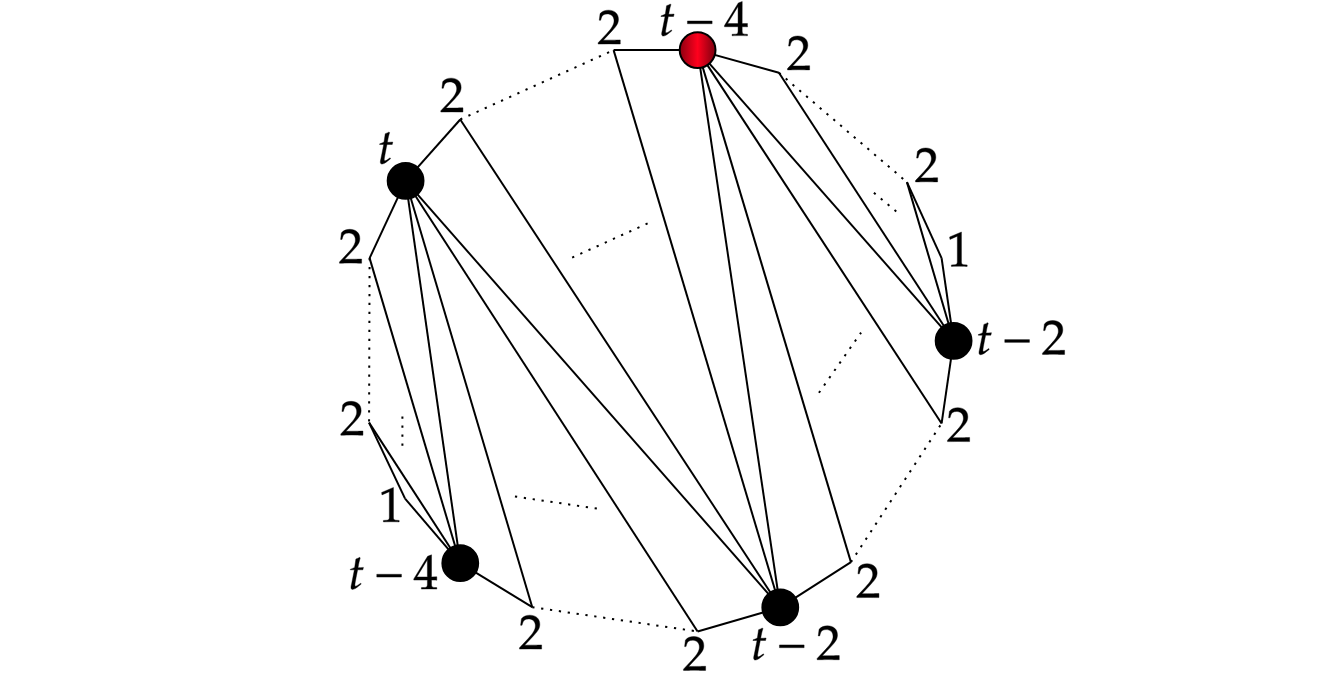}
        \end{tabular}
        \caption{Basic wormhole triangulations with $5$ weights from $\mathscr{P}_0$ and $\mathscr{P}_3$.}
        \label{Basic wormhole triangulations with 5 weights from P0 and P3.}
        \end{figure}

        \item $\mathscr{P}^{4}_{x_1,x_2,x_3,x_4,x_5}=(\mathscr{P}^{0}_{x_1,x_2,x_3,x_4,x_5})^{\vee}$ if and only if we have the relations:
        $$x_4-3=x_3-3,\hspace{1em} x_3-1=x_2-1, \hspace{1em} x_2-3=x_1-3,\hspace{1em} x_1-3=x_5-1, \hspace{1em} x_5-1=x_4-3.$$
        Thus $(x_1,x_2,x_3,x_4,x_5)=(t,t,t,t,t-2)$ for $t\geq 5$. It corresponds to the triangulations in Figure \ref{Basic wormhole triangulations with 5 weights from P0 and P4.}.
        \begin{figure}[h]
        \begin{tabular}{ll}
        \hspace*{-5em}
        \includegraphics[width=10cm]{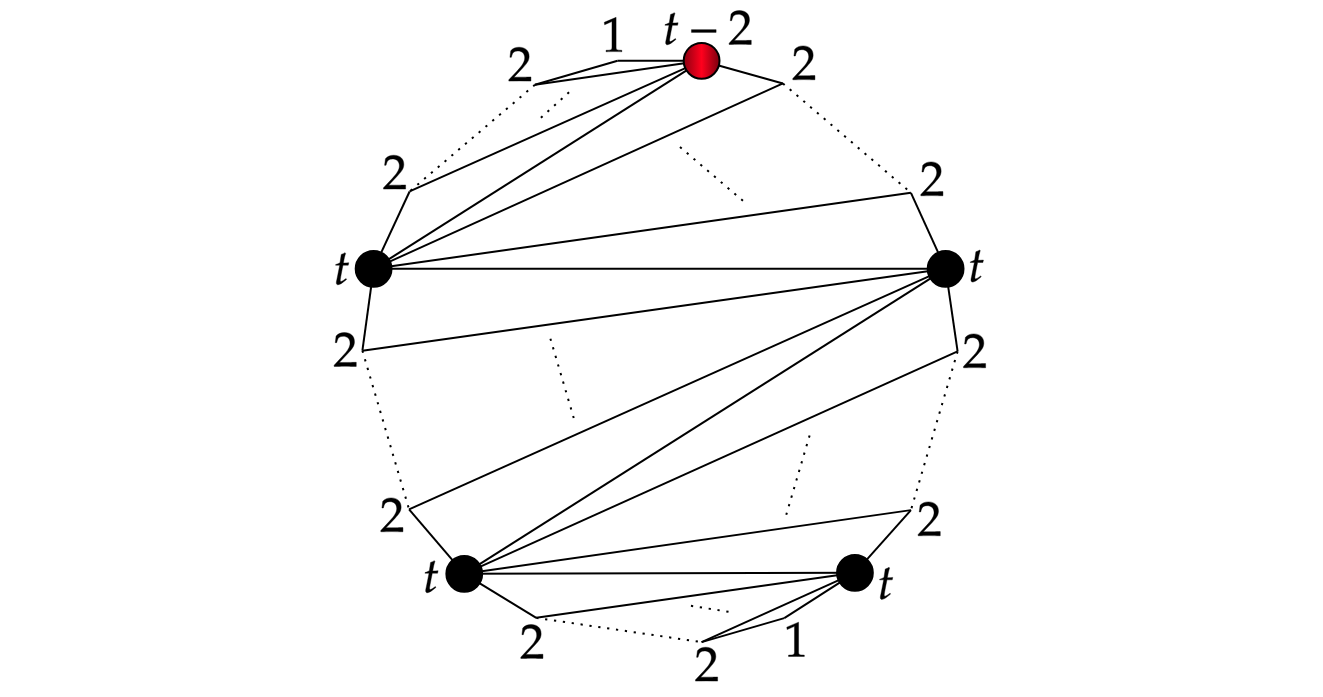}
        & \hspace*{-6em}
        \includegraphics[width=10cm]{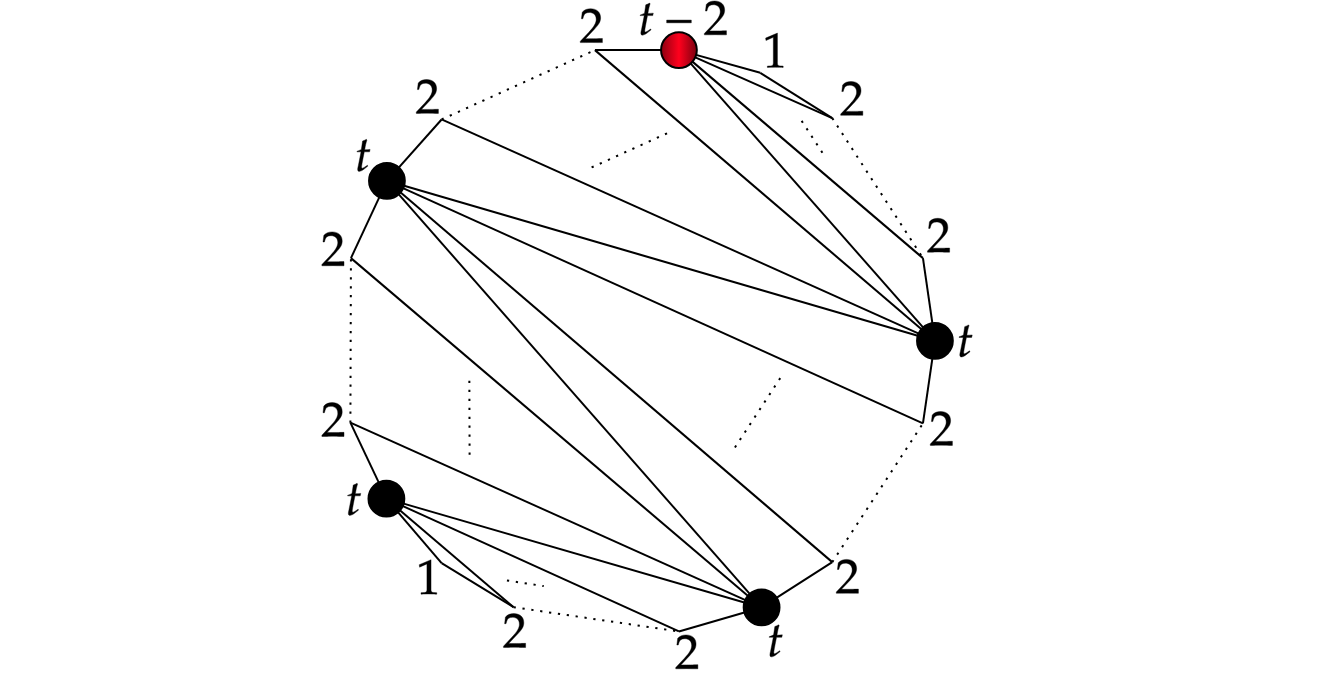}
        \end{tabular}
        \caption{Basic wormhole triangulations with $5$ weights from $\mathscr{P}_0$ and $\mathscr{P}_4$.}
        \label{Basic wormhole triangulations with 5 weights from P0 and P4.}
        \end{figure}
    \end{enumerate}
All basic wormhole triangulations with $5$ weights are either the triangulations shown in Figures \ref{Basic wormhole triangulations with 5 weights from P0 and P1.},\ref{Basic wormhole triangulations with 5 weights from P0 and P2.},\ref{Basic wormhole triangulations with 5 weights from P0 and P3.},\ref{Basic wormhole triangulations with 5 weights from P0 and P4.}, or those obtained by changing the frames in these triangulations, subject to the condition in Step III of Corollary \ref{Basic wormhole triangulations algorithm}.
\end{example}

\begin{example}
    (Recovering basic wormhole singularities from a basic wormhole triangulation). Consider the triangulations in Figure \ref{Basic wormhole triangulations with 5 weights from P0 and P3.} for $t=7$.
    \begin{figure}[H] %
        \begin{tabular}{ll}
        \hspace*{-5em}
        \includegraphics[width=10cm]{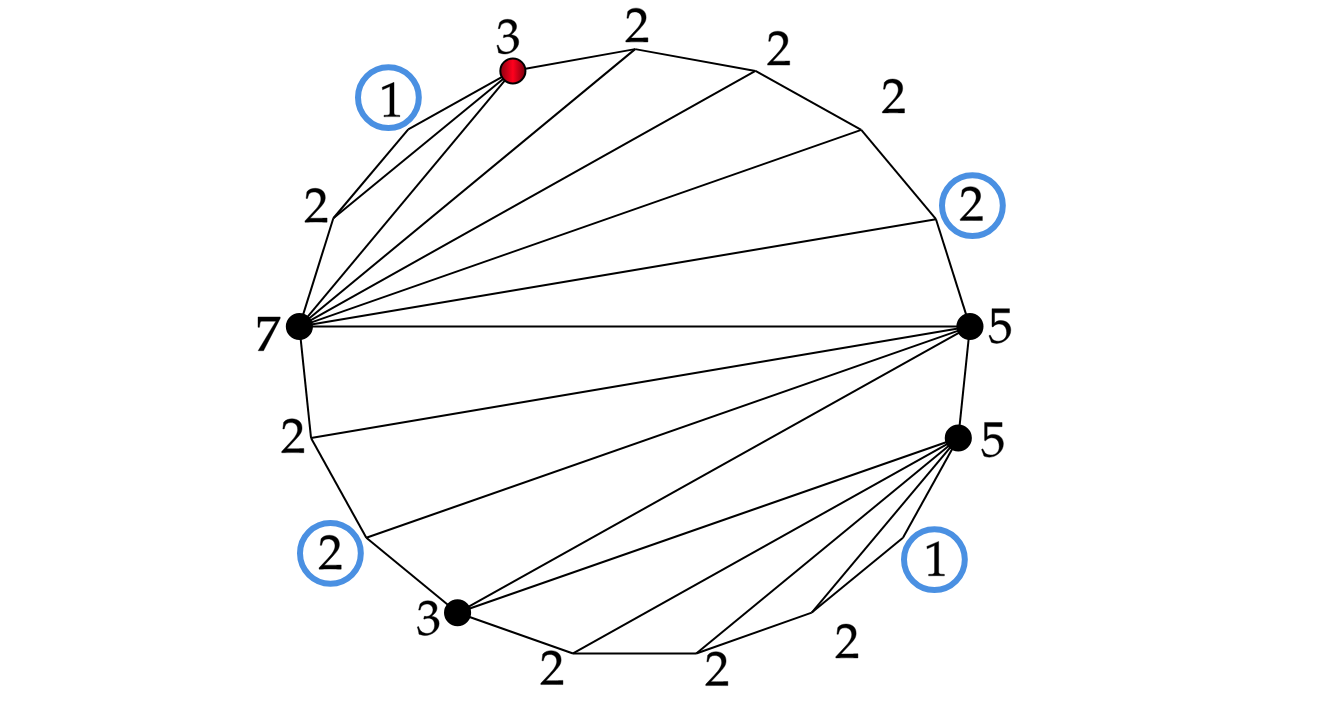}
        & \hspace*{-6em}
        \includegraphics[width=10cm]{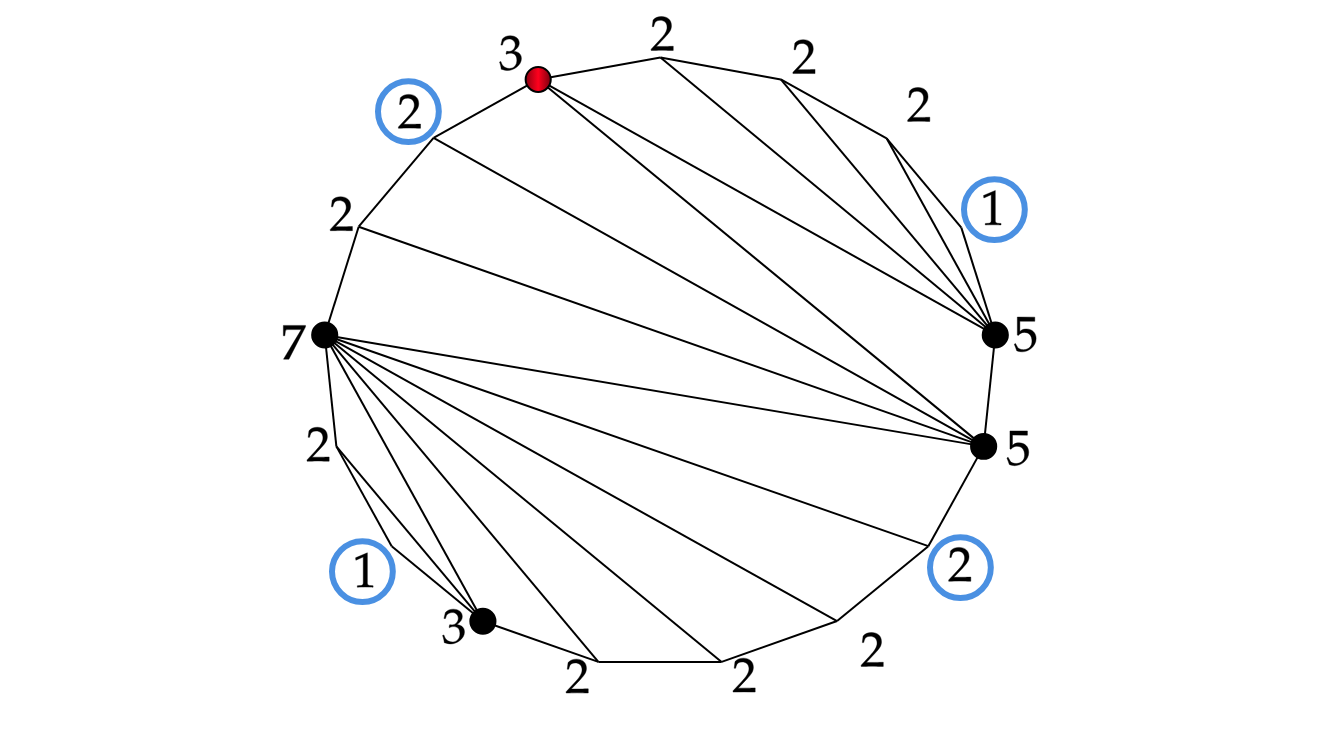}
        \end{tabular}
        \caption{Basic wormhole triangulations in Figure \ref{Basic wormhole triangulations with 5 weights from P0 and P3.} for $t=7$.}
        \label{final example}
    \end{figure}
    In this case, the extended zero chains are
    $$0=[1,2,7,2,2,3,2,2,2,1,5,5,2,2,2,2\mid 3],$$
    $$0=[2,2,7,2,1,3,2,2,2,2,5,5,1,2,2,2\mid 3].$$
    These are WW-decompositions of the H-J continued fraction
    $$\frac{m_0}{m_0-a_0}:=\frac{31901}{21901}=[2,2,7,2,2,3,2,2,2,2,5,5,2,2,2,2].$$
    It defines the wormhole singularity $\frac{1}{31901}(1,10000)$. Its minimal resolutions is given by the H-J continued fraction
    $$\frac{31901}{10000}=[4,2,2,2,2,5,7,2,2,3,2,2,6].$$
    We can change the frame from these two accordion triangulations via cyclic permutation (the only restriction is that the new hidden index cannot be assigned to positions where a vertex with index $1$ is located in any of these triangulations), see Step III of Corollary \ref{Basic wormhole triangulations algorithm}.
    \begin{figure}[H] %
        \begin{tabular}{ll}
        \hspace*{-5em}
        \includegraphics[width=10cm]{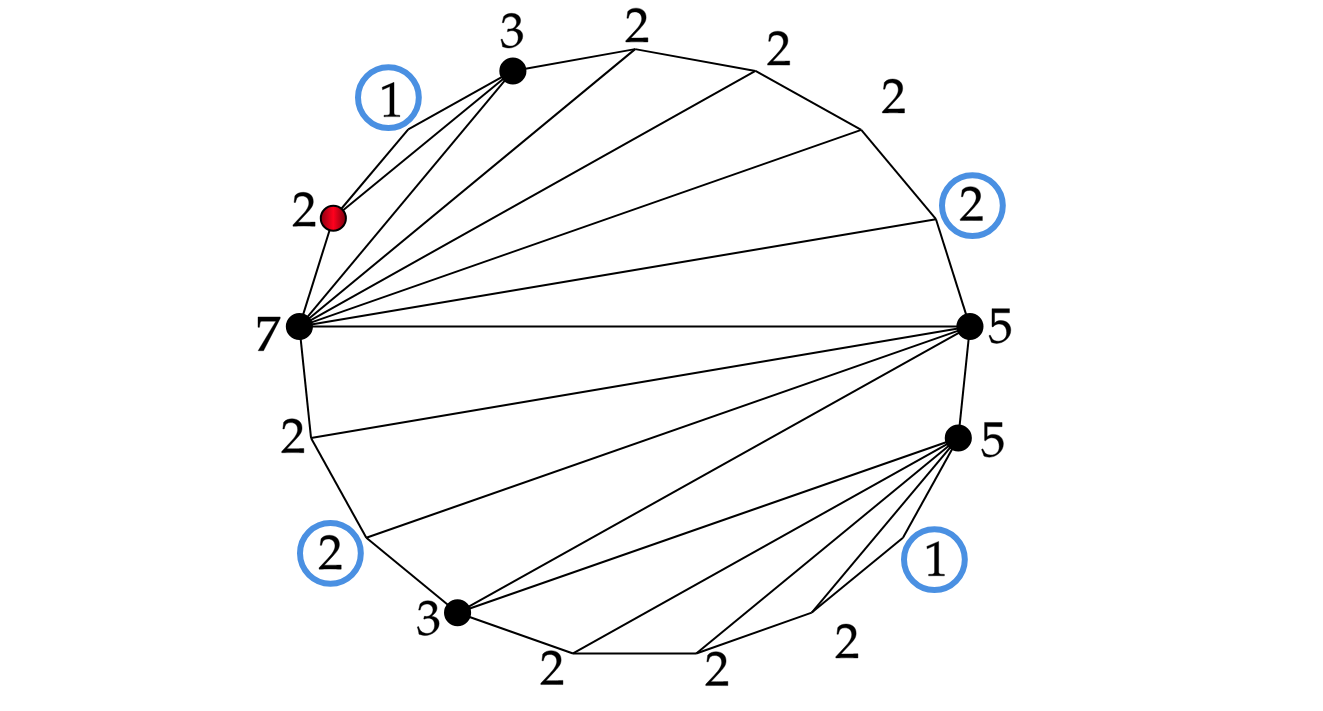}
        & \hspace*{-6em}
        \includegraphics[width=10cm]{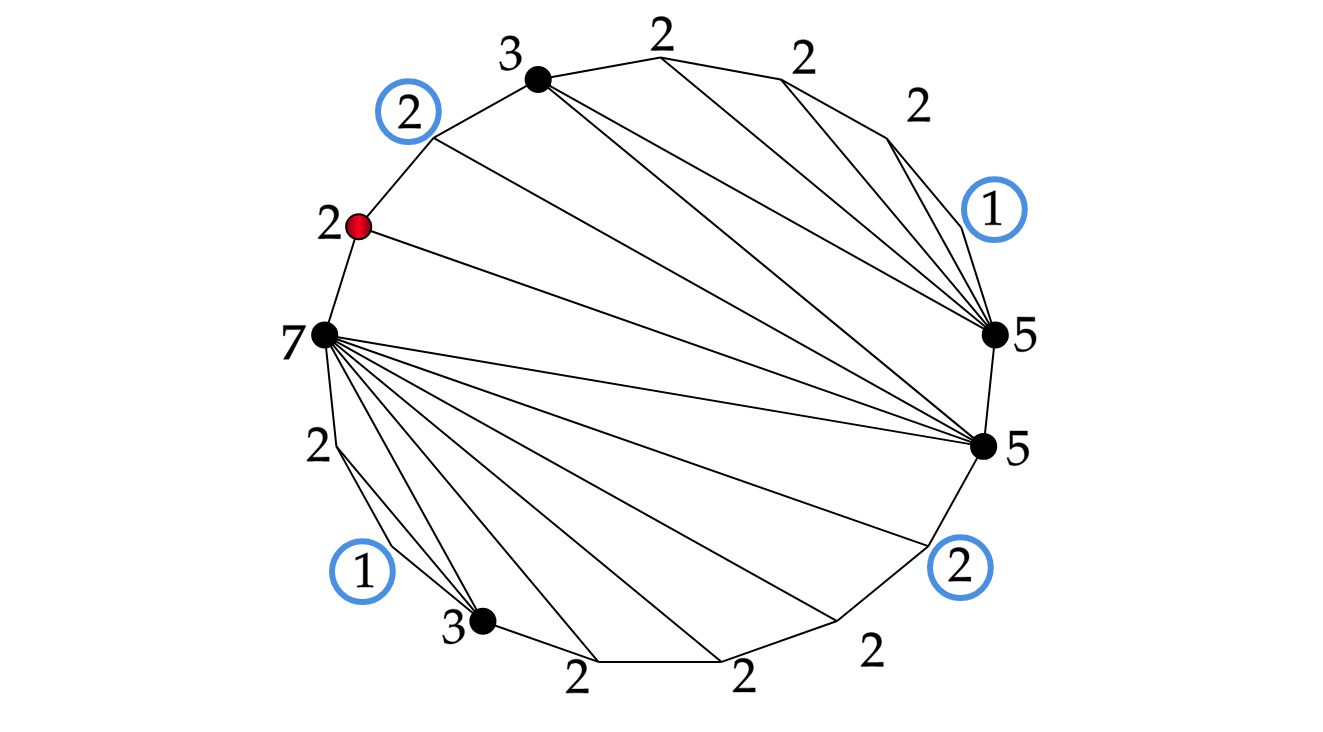}
        \end{tabular}
        \caption{Change of frame for triangulations in Figure \ref{final example}.}
        \label{change of frame, final example}
    \end{figure}
    The frames in Figure \ref{change of frame, final example} determine the extended zero chains
    $$0=[7,2,2,3,2,2,2,1,5,5,2,2,2,2,3,1\mid 2],$$
    $$0=[7,2,1,3,2,2,2,2,5,5,1,2,2,2,3,2\mid 2].$$
    These are WW-decompositions of the H-J continued fraction
    $$\frac{m_1}{m_1-a_1}=\frac{40223}{6425}=[7,2,2,3,2,2,2,2,5,5,2,2,2,2,3,2].$$
    It defines the wormhole singularity $\frac{1}{40223}(1,33798)$. Its minimal resolution is given by the H-J continued fraction
    $$\frac{40223}{33798}=[2,2,2,2,2,5,7,2,2,3,2,2,7,3].$$
\end{example}

\end{document}